\documentclass[12pt]{article}
\usepackage{amsmath,amsthm,amscd,amssymb}
\usepackage[mathscr]{eucal}
\usepackage{amssymb}
\usepackage{latexsym}

\textwidth=150mm
\oddsidemargin=5mm
\evensidemargin=5mm
\topmargin=0mm \textheight=220mm

\theoremstyle{definition}
\newtheorem{Def}{Definition}[section]
\newtheorem{Thm}[Def]{Theorem}
\newtheorem{Prop}[Def]{Proposition}
\newtheorem{Rem}[Def]{Remark}

\newtheorem{Cor}[Def]{Corollary}

\newtheorem{Lem}[Def]{Lemma}

\numberwithin{equation}{section}

\newcommand{\C}{\mathbb{C}}
\newcommand{\Z}{\mathbb{Z}}

\usepackage{longtable}

\begin{document}
\title{A ring of symmetric Hermitian modular forms of degree $2$ with integral Fourier coefficients}
\author{Toshiyuki Kikuta}
\maketitle

\noindent
{\bf 2010 Mathematics subject classification}: Primary 11F30 $\cdot$ Secondary 11F55\\
\noindent
{\bf Key words}: ring of modular forms, Hermitian modular forms, generators.

\begin{abstract}
We determine the structure over $\Z$ of the ring of symmetric Hermitian modular forms 
with respect to $\mathbb{Q}(\sqrt{-1})$ of degree $2$ (with a character),
whose Fourier coefficients are integers. 
Namely, we give a set of generators consisting of $24$ modular forms.
As an application of our structure theorem, 
we give the Sturm bounds of such the modular forms of weight $k$ with $4\mid k$, in the case $p=2$, $3$. 
We remark that the bounds for $p\ge 5$ are already known. 
\end{abstract}

\section{Introduction}
Let $e_4$, $e_6$ be the normalized Eisenstein series of respective weight $4$, $6$ for $\Gamma _1:=SL_2(\mathbb{Z})$ 
and $\delta $ the Ramanujan delta function defined by $\delta =2^{-6}\cdot 3^{-3}(e_4^3-e_6^2)$. 
For the $\Z$-module $M_k(\Gamma _1;\Z)$ consisting of modular forms of weight $k$ for $\Gamma _1$ whose Fourier coefficients are in $\Z$, 
we define an algebra over $\Z$ as
\[A(\Gamma _1;\Z):=\bigoplus_{k\in \Z}M_k(\Gamma _1;\Z). \] 
Then it is well-known as a classical result that all the Fourier coefficients of the modular forms $e_4$, $e_6$, $\delta $ are integers 
and they generate $A(\Gamma _1;\Z)$. 
Namely we have 
\[A(\Gamma _1;\Z)=\Z[e_4,e_6,\delta ]. \]

In the case of Siegel modular forms for the symplectic group $\Gamma _2:=Sp_2(\mathbb{Z})$ of degree $2$, there is a famous result of Igusa.  
He showed such the ring over $\Z$ are generated by $15$ modular forms. He showed also that its set of generators are minimal. 
 
In this paper, we consider the ring of symmetric Hermitian modular forms of degree $2$ with respect to $\mathbb{Q}(\sqrt{-1})$ 
whose Fourier coefficients are in $\mathbb{Z}$.
Since it seems to be difficult to give generators of the full space of them, 
we restrict our selves to the case where the weights are multiples of $4$.     
We remark that, the ring of Siegel modular forms whose weights are multiples of $4$ are generated over $\mathbb{Z}$ by 
$23$ modular forms. 
This is an easy conclusion of Igusa's result.   

In our case, there exists a set of generators consisting of $24$ modular forms whose weights are 
\begin{align*}
&4,\ 8,\ 12,\ 12,\ 12,\ 16,\ 16,\ 20,\ 24,\ 24,\ 28,\ 28,\ 32,\\
&36,\ 36,\ 36,\ 40,\ 40,\ 48,\ 48,\ 52,\ 60,\ 60,\ 72,\ 84. 
\end{align*}
The precise statement can be found in Theorem \ref{Thm1}. 
We construct explicitly these generators in Subsection \ref{Const}. 
  
As an application of this result, we can obtain the Sturm bounds for $p=2$, $3$, 
in the Hermitian modular forms whose weights are multiples of $4$ (Theorem \ref{Thm2}).
We remark that the Sturm bounds for $p\ge 5$ are already known in \cite{Ki-Na2}.  

\section{Preliminaries}
\label{sec:4}
\subsection{Hermitian modular forms of degree $2$}
\label{sec:4.1}
We deal with Hermitian modular forms of degree $2$ only for ${\boldsymbol K}:=\mathbb{Q}(\sqrt{-1})$. 
Let ${\mathcal O}$ be the ring of Gauss integers, i.e. ${\mathcal O}=\mathbb{Z}[\sqrt{-1}]$.

Let $\mathbb{H}_2$ be the Hermitian upper half-space of degree $2$ defined as
\[
\mathbb{H}_2:=\{ Z\in M_2(\mathbb{C})\;|\; \tfrac{1}{2i}(Z-{}^t\overline{Z})>0\;  \}
\]
where ${}^t\overline{Z}$ is the transposed complex conjugate of $Z$. 

The Hermitian modular group of degree $2$
\[
U_2(\mathcal{O}):=\left\{\;M\in M_{4}(\mathcal{O})\;|\; 
{}^t\overline{M}J_2M=J_2,\; J_2=\binom{\;0_2\;\,-1_2}{1_2\;\;\;0_2}\right\}
\]
acts on $\mathbb{H}_2$ by fractional transformation
\[
\mathbb{H}_2\ni Z\longmapsto M\langle Z\rangle :=(AZ+B)(CZ+D)^{-1},
\;M=\begin{pmatrix} A & B \\ C & D\end{pmatrix}\in U_2(\mathcal{O}).
\]
We denote by $M_k(U_2({\mathcal O}))=M_k(U_2({\mathcal O}),{\det}^k)$ the space of symmetric Hermitian modular forms of
weight $k$ and character ${\det}^k$ with respect to $U_2({\mathcal O})$.
(We deal with modular forms with character ${\det }^{k}$, but we drop this in the notation). 
Namely, it consists of holomorphic functions $F:\mathbb{H}_2\longrightarrow \mathbb{C}$ satisfying
\[
F\mid_kM(Z):={\det}(CZ+D)^{-k}F(M\langle Z\rangle )={\det (M)}^k\cdot F(Z),
\]
for all $M=\begin{pmatrix}A & B \\ C & D\end{pmatrix} \in U_2({\mathcal O})$ and $F({}^tZ)=F(Z)$.
Note that one has $M_k(U_2({\mathcal O}))=\{0\}$ if $k$ is odd. 
 
The cusp forms are characterized by the condition
\[
\Phi \Big(F\mid_k\binom{\!{}^t\overline{U}\;0}{0\;\;U}\Big) \equiv 0\quad \text{for}\;
\text{all}\;
U\in GL_2(\mathbb{Q}(\sqrt{-1}))
\]
where $\Phi$ is the Siegel $\Phi$-operator.
We denote by $S_k(U_2({\mathcal O}))$ the subspace consisting of all cusp forms in $M_k(U_2({\mathcal O}))$. 
\subsection{Fourier expansion}
\label{sec:2.2}

Since all of $F$ in $M_k(U_2({\mathcal O}))$ satisfies the condition
\[
F(Z+B)=F(Z) 
\quad \text{for}\;\text{all}\;
B\in Her_2(\mathcal{O}),
\]
it has a Fourier expansion of the form
\[
F(Z)=\sum_{0\leq H\in\Lambda_2(\boldsymbol{K})}a_F(H)e^{2\pi i\text{tr}(HZ)}, 
\]
where
\[
\Lambda_2(\boldsymbol{K}):=\{ H=(h_{ij})\in Her_2(\boldsymbol{K})\; |\; h_{ii}\in\mathbb{Z},
2 h_{ij}\in\mathcal{O} \}.
\]
We write $H=(m,r,s,n)$ for $H=\begin{pmatrix} m & \frac{r+si}{2} \\ \frac{r-si}{2} & n \end{pmatrix}\in \Lambda _2({\mathcal O})$ and also $a_F(m,r,s,n)$ for $a_F\begin{pmatrix}m & \frac{r+si}{2} \\ \frac{r-si}{2} &  n \end{pmatrix}$ simply. 
Let $R$ be a subring of $\C$, we define $M_k(U_2({\mathcal O});R)$ as an $R$-module of all of $F\in M_k(U_2({\mathcal O}))$ 
such that $a_F(H)\in R$ for any $H\in \Lambda _2({\mathcal O})$. 
We put also $S_k(U_2({\mathcal O});R):=M_k(U_2({\mathcal O});R)\cap S_k(U_2({\mathcal O}))$.

We put
\begin{align*}
&\dot{q}_{11}:=\exp(2\pi i z_{11}),\quad \dot{q}_{22}:=\exp(2\pi i z_{22}), \\
&\dot{q}_{12}:=\exp\left( 2\pi i \frac{z_{12}-z_{21}}{-2i} \right),\quad \ddot{q}_{12}:=\exp\left(2\pi i \frac{z_{12}+z_{21}}{2}\right).
\end{align*}
Then for $H=(m,r,s,n)$ we have 
\[
e^{2\pi i\text{tr}(HZ)}=\dot{q_{1}}^{m}\dot{q_{12}}^{r}\ddot{q_{12}}^{s}\dot{q_{2}}^{n}.
\]

Then any element $F\in M_k(U_2({\mathcal O});R)$ can be regarded as an element of
\[
R[\![\dot{\boldsymbol{q}}]\!]:=
R[\dot{q}_{12}^{\pm 1},\ddot{q}_{12}^{\pm }][\![\dot{q_{1}},\dot{q_{2}}]\!].
\]
This notation is useful to calculate the Fourier expansion of Hermitian modular forms. 

We consider the Hermitian Eisenstein series of degree $2$
\[
E_k(Z):=\sum_{M=\left(\begin{smallmatrix} * & * \\ C & D \end{smallmatrix}\right)} ({\det}M)^{\frac{k}{2}}{\det}(CZ+D)^{-k},\quad 
Z\in\mathbb{H}_2,
\]
where $k>4$ is even and $M=\begin{pmatrix} * & * \\ C & D \end{pmatrix}$ runs over a set of representatives of
$\left\{\begin{pmatrix} * & * \\ 0_2& * \end{pmatrix} \right\} \backslash U_2(\mathcal{O})$. 
Then we have
\[
E_k\in M_k(U_2(\mathcal{O})).
\]
Moreover $E_4\in M_4(U_2(\mathcal{O}))$ is constructed 
by the Maass lift (\cite{Kri}).
The Fourier coefficient of $E_k$ is given by the following formula:
\begin{Thm}[Krieg \cite{Kri}, Dern \cite{Der}]
\label{GHE} 
The Fourier coefficient $a_{E_k}(H)$ of $E_k$ is given as follows.
\begin{align*}
& a_{E_k}(H)\\
&=\begin{cases}
1 & \text{if}\;\; H=0_2,\\
\displaystyle 
-\frac{2k}{B_k}\,\sigma_{k-1}(\varepsilon (H))  & \text{if}\;\;
{\rm rank}(H)=1,\\
\displaystyle
\frac{4k(k-1)}{B_k\cdot B_{k-1,\chi_{-4}}}\sum_{0< d|\varepsilon (H)}
d^{k-1}
G_{\boldsymbol{K}}(k-2,4\,{\det}(H)/d^2) & \text{if}\;\; 
{\rm rank}(H)=2.
\end{cases}
\end{align*}
where\\
\quad$B_m$ is the $m$-th Bernoulli number,\\
\quad$B_{m,\chi_{-4}}$ is the $m$-th generalized Bernoulli number associated 
with the
Kronecker character $\chi_{-4}=\left(\frac{-4}{*}\right)$,\\
\quad $\varepsilon (H):={\rm max}\{ l\in\mathbb{N}\,|\, l^{-1}H\in
\Lambda_2(\boldsymbol{K})\,\}$,\\
and
\begin{equation}
\label{GK}
\begin{split}
&G_{\boldsymbol{K}}(m,N):=\frac{1}{1+|\chi_{-4}(N)|}
(\sigma_{m,\chi_{-4}}(N)-
\sigma^*_{m,\chi_{-4}}(N))\\
&\sigma_{m,\chi_{-4}}(N):=\sum_{0< d|N}\chi_{-4}(d)d^m,\quad
\sigma^*_{m,\chi_{-4}}(N):=\sum_{0< d|N}\chi_{-4}(N/d)d^m.
\end{split}
\end{equation}
\end{Thm}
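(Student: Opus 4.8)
The plan is to recognize $E_k$ as a lift from smaller data, for which the divisor-sum shape $\sum_{0<d\mid\varepsilon(H)}d^{k-1}(\cdots)$ of the rank-$2$ formula is the tell-tale signature of the Hermitian Maass (Spezialschar) lift. First I would show that $E_k$ lies in the Maass space and equals the lift of the Jacobi--Eisenstein series of weight $k$ and index $1$ attached to $\boldsymbol{K}$. For $k>4$ this is obtained by comparing the first Fourier--Jacobi coefficients of the two sides and invoking the characterization of the Maass space; the exceptional weight $k=4$ is precisely the case that Krieg realizes through the Maass lift directly (cf.\ \cite{Kri}). Granting this identification, the coefficient $a_{E_k}(H)$ with $\rank(H)=2$ collapses to $\sum_{0<d\mid\varepsilon(H)}d^{k-1}c_0(\det(H)/d^2)$, where $c_0$ denotes the Fourier coefficients of the index-$1$ Jacobi--Eisenstein series, so the whole problem is reduced to the explicit evaluation of $c_0$.

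The degenerate coefficients I would dispatch with the Siegel $\Phi$-operator. Since $\Phi(E_k)$ is a modular form of weight $k$ for $SL_2(\Z)$ with constant term $1$, it must equal the normalized elliptic Eisenstein series $1-\frac{2k}{B_k}\sum_{n\ge 1}\sigma_{k-1}(n)q^n$; this gives $a_{E_k}(0_2)=1$ at once. For $\rank(H)=1$ one reduces $H$ by a $GL_2(\mathcal{O})$-substitution to $\mathrm{diag}(\varepsilon(H),0)$, whence the coefficient is read off the boundary expansion as the $\varepsilon(H)$-th coefficient of $\Phi(E_k)$, namely $-\frac{2k}{B_k}\sigma_{k-1}(\varepsilon(H))$. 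Both leading constants thus trace back to the special value $\zeta(1-k)=-B_k/k$.

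For the non-degenerate coefficients I would compute $c_0(N)$ by the standard local-density (Siegel series) method, writing it as a product over primes $p$ of local representation densities of the binary Hermitian form attached to $H$, and evaluating each factor. These factors depend on the splitting of $p$ in $\boldsymbol{K}=\Q(\sqrt{-1})$ — split, inert, or ramified at $p=2$ — and, after the Euler product is summed, telescope into the arithmetic function $G_{\boldsymbol{K}}(k-2,4\det(H)/d^2)$ of \eqref{GK}. The overall constant $\frac{4k(k-1)}{B_k\,B_{k-1,\chi_{-4}}}$ then emerges from the two zeta/$L$-normalizations $\zeta(1-k)$ and $L(2-k,\chi_{-4})=-B_{k-1,\chi_{-4}}/(k-1)$ that govern the Jacobi--Eisenstein coefficients.

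The hard part will be the explicit evaluation of these local densities and the verification that the Euler product collapses to the single finite divisor sum as stated, especially at the ramified prime $p=2$, where the Hermitian lattice over $\Z_2[\sqrt{-1}]$ requires a separate analysis, together with the careful bookkeeping of $\varepsilon(H)$ against $\det(H)$ in passing between the global sum and the local factors. The convergence of the defining series (valid only for $k>4$) and the independent treatment of $k=4$ through the Maass lift are the remaining points to confirm; once the local computation is in hand, matching the global constant reduces to a single check against one known coefficient.
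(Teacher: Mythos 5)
First, a point of calibration: the paper does not prove Theorem \ref{GHE} at all. It is imported as a known result, which is why it carries the names of Krieg and Dern; the references \cite{Kri} and \cite{Der} contain the actual proofs. So there is no internal proof to measure your attempt against, and the only fair comparison is with the cited literature. Your outline does have the same overall shape as Krieg's argument: identify $E_k$ (for $k>4$) with a Maass lift, treat $k=4$ separately since the defining series does not converge there, read off the degenerate coefficients from the Siegel $\Phi$-operator, and reduce the rank-$2$ coefficients to the coefficients of a one-variable input form. The main difference is in the input: Krieg's lift goes from the plus space $M_{k-1}^{+}(\Gamma_0^{(1)}(4),\chi_{-4}^{k-1})$ --- exactly the correspondence this paper recalls just before Lemma \ref{Lem2}, whose coefficient formula $a_{{\rm Lift}(h)}(H)=\sum_{0<d\mid \varepsilon(H)}d^{k-1}\tfrac{1}{1+|\chi_{-4}(4\det H/d^2)|}a_h(4\det H/d^2)$ already produces the stated rank-$2$ shape --- so that the divisor sums $G_{\boldsymbol{K}}(k-2,\cdot)$ come from a classical one-variable Eisenstein computation, whereas you propose a Jacobi--Eisenstein series evaluated by local representation densities. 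Both routes can be made to work; the plus-space route delivers the arithmetic function $G_{\boldsymbol{K}}$ directly and avoids a separate local analysis at the ramified prime $2$.

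Second, two genuine gaps in your sketch. (i) The identification of $E_k$ with the lift is under-argued: comparing first Fourier--Jacobi coefficients pins a form down only once you already know that \emph{both} forms lie in the Maass space, and the membership of $E_k$ in the Maass space is precisely the nontrivial content of Krieg's theorem; it does not follow from a "characterization of the Maass space" without further input (e.g.\ Hecke equivariance of the lift together with multiplicity one for the Eisenstein part). (ii) Everything that makes Theorem \ref{GHE} the specific statement it is --- the function $G_{\boldsymbol{K}}$, the constant $4k(k-1)/(B_k B_{k-1,\chi_{-4}})$, and the bookkeeping of $\varepsilon(H)$ against $\det H$ --- is deferred to "the hard part", i.e.\ the local density evaluation you do not carry out. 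As it stands the proposal is a plausible plan whose two essential steps remain open, not a proof.
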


We can construct cusp forms by the Hermitian Eisenstein series
(cf. \cite{D-K}, Corollary 2);
\begin{align*}
&E_{10}-E_4E_6\in S_{10}(U_2(\mathcal{O})),\\
&E_{12}-\frac{441}{691}E_4^3-\frac{250}{691}E_6^2\in S_{12}(U_2(\mathcal{O})).
\end{align*}

\subsection{Siegel modular forms of degree $2$}
Let $M_k(\Gamma_2)$ denote the space of Siegel modular forms of weight $k$
$(\in\mathbb{Z})$ for the Siegel modular group $\Gamma_2:=Sp_2(\mathbb{Z})$
and $S_k(\Gamma_2)$ the subspace of cusp forms.

Any Siegel modular form
$F$ in $M_k(\Gamma_2)$  has a Fourier expansion of the form
\[
F(Z)=\sum_{0\leq T\in\Lambda_2}a_F(T)e^{2\pi i\text{tr}(TZ)},
\]
where
\[
\Lambda_2=Sym_2^*(\mathbb{Z})
:=\{ T=(t_{ij})\in Sym_2(\mathbb{Q})\;|\; t_{ii},\;2t_{ij}\in\mathbb{Z}\; \}
\]
(the lattice in $Sym_2(\mathbb{R})$ of half-integral, symmetric matrices).
We write $T=(m,r,n)$ for $T=\begin{pmatrix}m & \frac{r}{2} \\ \frac{r}{2} & n \end{pmatrix}$ and also $a_F(m,r,n)$ for $a_F\begin{pmatrix}m & \frac{r}{2} \\ \frac{r}{2} & n\end{pmatrix}$.

Taking $q_{ij}:=\text{exp}(2\pi iz_{ij})$ with $Z=(z_{ij})\in\mathbb{H}_2$, we have for $T=(m,r,n)$
\[
e^{2\pi i\text{tr}(TZ)}=q_{11}^{m}q_{12}^{r}q_{22}^{n}.
\]
For any subring $R\subset\mathbb{C}$, we adopt the notation,
\begin{align*}
& M_k(\Gamma_2;R):=\{ F=\sum_{T\in\Lambda_n}a_F(T)q^T\;|\;
a_F(T)\in R\;(\forall T\in\Lambda_2)\;\},\\
& S_k(\Gamma_n;R):=M_k(\Gamma_2)\cap S_k(\Gamma_2).
\end{align*}

Any element $F\in M_k(\Gamma_2;R)$ can be regarded as an element of
\[
R[\![\boldsymbol{q}]\!]:=R[q_{12}^{-1},q_{12}][\![ q_{11},q_{22}]\!].
\]

The space
$\mathbb{H}_2$ contains the Siegel upper half-space of degree $2$
\[
\mathbb{S}_2:=\mathbb{H}_2\cap Sym_2(\mathbb{C}).
\]
Hence we can define the restriction map
\begin{align*}
R[\![\dot{\boldsymbol{q}}]\!]\longrightarrow R[\![\boldsymbol{q}]\!]
\end{align*}
via the correspondence $F\mapsto F|_{\mathbb{S}_2}:=F(z_{ij})|_{z_{21}=z_{12}}$ (this means $\dot{q}_{12}\mapsto 1$, $\ddot{q}_{12}\mapsto q_{12}$). 
In particular, if $F\in M_k(U_2({\mathcal O});R)\subset R[\![\dot{\boldsymbol{q}}]\!]$ then we have $F|_{\mathbb{S}_2}\in M_k(\Gamma _2;R)\subset R[\![\boldsymbol{q}]\!]$. 
This fact comes from each condition of the modularity.

\subsection{Igusa's generators over $\mathbb{Z}$}
\label{sec:3.1}
Let $k$ be an even integer with $k\ge 4$. The Siegel Eisenstein series 
\[
G_k(Z):=\sum_{M=\left(\begin{smallmatrix}*&* \\ C & D\end{smallmatrix}\right)} {\det}(CZ+D)^{-k},\quad Z\in\mathbb{S}_2
\]
defies an element of $M_k(\Gamma _2;\mathbb{Q})$.
Here $M=\begin{pmatrix}* & * \\ C & D\end{pmatrix}$ runs over a set of representatives
$\left\{\begin{pmatrix}* & * \\ 0_2 & * \end{pmatrix}\right\}\backslash\Gamma _2$. 
We write $X_4:=G_4$ and $X_6:=G_6$. 
We set
\begin{equation}
\label{Siegel cusp}
\begin{split}
X_{10}:&=-\frac{43867}{2^{10}\cdot 3^5\cdot 5^2\cdot 7\cdot 53}(G_{10}-G_4G_6), \\
X_{12}:&=-\frac{691\cdot 1847}{2^{13}\cdot 3^6\cdot 5^3\cdot 7^2}
(G_{12}-\frac{441}{691}G_4^3-\frac{250}{691}G_6^2).
\end{split}
\end{equation}
Then we have $X_k\in S_k(\Gamma_2;\Z)$ $(k=10,12)$ and
$a_{X_{10}}(1,1,1)=a_{X_{12}}(1,1,1)=1$.

Let $k$ be an even integer with $k\ge 4$ 
and $G_k$ the normalized Siegel Eisenstein series of weight $k$. 
We set  
\begin{align*}
&Y_{12} := 2^{-6}\cdot 3^{-3}(X_4^3 - X_6^2)+2^4\cdot 3^2X_{12},\\
&X_{16} := 2^{-2}\cdot 3^{-1}(X_4X_{12} - X_6 X_{10}),\\
&X_{18} := 2^{-2}\cdot 3^{-1}(X_6 X_{12}-X_4^2X_{10}),\\
&X_{24} := 2^{-3}\cdot 3^{-1}(X_{12}^2 - X_4 X_{10}^2),\\ 
&X_{28} := 2^{-1}\cdot 3^{-1}(X_4 X_{24} - X_{10} X_{18}),\\
&X_{30} := 2^{-1}\cdot 3^{-1}(X_6 X_{24} - X_4 X_{10} X_{16}),\\
&X_{36} :=2^{-1}\cdot 3^{-2}(X_{12} X_{24} - X_{10}^2 X_{16}),\\
&X_{40} :=2^{-2}(X_4 X_{36} - X_{10} X_{30}),\\
&X_{42} := 2^{-2}\cdot 3^{-1}(X_{12} X_{30} - X_4 X_{10} X_{28}),\\
&X_{48} := 2^{-2}(X_{12}X_{36} - X_{24}^2).
\end{align*}

We write 
\begin{align*}
A^{(m)}(\Gamma _2;\mathbb{Z}):=\bigoplus _{k\in m\mathbb{Z}}M_k(\Gamma _2;\Z).
\end{align*}
The following structure theorem is due to Igusa. 
\begin{Thm}[Igusa \cite{Igu2}] 
One has $X_k\in M_k(\Gamma _2;\Z)$ ($k=4$, $6$, $\cdots $, $48$) and $Y_{12}\in M_{12}(\Gamma _2;\Z)$ and the graded ring $A^{(2)}(\Gamma _2;\mathbb{Z})$
is generated over $\mathbb{Z}$ by them. Moreover, the set of $14$ generators are minimal. 
\end{Thm}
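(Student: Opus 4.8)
The plan is to bootstrap from Igusa's structure theorem over $\C$ and then control the integral lattice structure prime by prime. First I would recall that the even-weight part of the ring of Siegel modular forms is the polynomial ring $\C[X_4, X_6, X_{10}, X_{12}]$, so that after tensoring the four algebraically independent forms yield $A^{(2)}(\Gamma_2; \Z) \otimes_\Z \Q = \Q[X_4, X_6, X_{10}, X_{12}]$. Consequently every $F \in M_k(\Gamma_2; \Z)$ has a unique expression as a $\Q$-polynomial in these four forms, and the whole problem becomes one of understanding which denominators occur. Before that I would settle integrality of the generators themselves: for $X_4, X_6$ this is the classical integrality of the normalized Eisenstein series, for $X_{10}, X_{12}$ one uses the prescribed cusp-form normalizations together with $a(1,1,1) = 1$, and for the remaining forms $Y_{12}, X_{16}, \ldots, X_{48}$, each being a combination of products of lower forms divided by an explicit $2^a 3^b$, integrality amounts to the finite check that the stated power of $2$ and $3$ divides every Fourier coefficient of the numerator.

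Next I would prove generation. Write $R := \Z[X_4, X_6, X_{10}, X_{12}] \subseteq A := A^{(2)}(\Gamma_2; \Z)$; since $A \otimes \Q = R \otimes \Q$, in each weight $k$ the lattice $M_k(\Gamma_2; \Z)$ sits between $R_k$ and $\tfrac{1}{N} R_k$ for some integer $N$, and the task is to show the index is supported only at $2$ and $3$ and is exactly resolved by the ten additional forms. I would localize: for every prime $p \ge 5$ show $A_{(p)} = R_{(p)}$, which reduces everything to $p = 2$ and $p = 3$. At those two primes I would induct on the weight $k$: given an integral $F$, expand it over the $\Q$-basis of monomials in the generators, and successively subtract integral multiples of the generators, beginning with the new forms that were manufactured precisely to cancel the $2$- and $3$-power obstructions, so as to strictly lower the denominator until the inductive hypothesis applies.

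Finally, minimality. Here I would show that no generator lies in the subring generated by the other thirteen. Weight by weight I would compare $\dim_\Q M_k(\Gamma_2; \Q)$, read off from the Hilbert series of $\Q[X_4, X_6, X_{10}, X_{12}]$, against the span of the admissible monomials in the remaining generators, and then pass to $A \otimes \F_2$ or $A \otimes \F_3$ to detect that the image of the generator in question fails to be a polynomial in the others; indecomposability at a single prime already suffices.

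The hard part will be generation and minimality at $p = 2$ and $p = 3$: one must prove that the $2$- and $3$-adic denominators of integral forms are uniformly bounded and are accounted for by exactly these ten extra forms, with none redundant. This is where Igusa's detailed analysis of the $\Z$-lattice structure, resting on explicit Fourier-coefficient congruences and the geometry of the Satake compactification, carries the essential weight, rather than any soft dimension-count alone.
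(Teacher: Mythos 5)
The paper does not prove this statement at all: it is quoted verbatim as Igusa's theorem from \cite{Igu2}, and the present paper only uses it (via Corollary \ref{Cor:S_gen}) as an external input. So there is no internal proof to compare against; the only question is whether your proposal would stand on its own as a proof of Igusa's result. It would not. Your outline correctly identifies the skeleton (integrality of the $14$ forms, generation after localizing away from $2$ and $3$, then the primes $2$ and $3$, then minimality), but at every point where the real content lies you either wave at it or defer it back to Igusa. The concluding paragraph says explicitly that the denominator bounds at $p=2,3$ are "where Igusa's detailed analysis \dots carries the essential weight" --- that is, you are citing the theorem you set out to prove, which makes the argument circular. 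The inductive step "successively subtract integral multiples of the generators \dots so as to strictly lower the denominator" is precisely the assertion that the subring generated by the $14$ forms exhausts the lattice $M_k(\Gamma_2;\Z)$ inside $\Q[X_4,X_6,X_{10},X_{12}]_k$; no mechanism is offered for why such a subtraction always exists, and this is exactly the hard arithmetic fact (Igusa establishes it through explicit control of the lattice, ultimately via theta constants and the behavior of Fourier coefficients at $2$ and $3$). Likewise, the claim $A_{(p)}=R_{(p)}$ for $p\ge 5$ is true but is itself a theorem requiring proof, not a reduction one may simply "show".

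Two further soft spots: integrality of the ten composite generators (e.g.\ $X_{36}$, $X_{48}$) is not a "finite check" as stated --- divisibility of \emph{all} Fourier coefficients of the numerator by the prescribed power of $2$ and $3$ must be reduced to finitely many coefficients by a Sturm-type bound or by a congruence argument, which you do not supply; and for minimality, passing to $A\otimes\F_2$ or $A\otimes\F_3$ and checking indecomposability there is a sensible strategy, but you give no criterion or computation that would actually detect it for any of the $14$ generators. In short, the proposal is a reasonable roadmap of what must be proved, but every load-bearing step is left to the very reference being proved, so there is a genuine gap.
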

\begin{Rem}
Actually, he determined the structure of the full space $A^{(1)}(\Gamma _2;\Z)$ by using the cusp form of weight $35$. 
However, since we do not use this result we do not mention its detail.  
\end{Rem}
From his result, we have immediately the following property.
\begin{Cor}
\label{Cor:S_gen}
The ring $A^{(4)}(\Gamma _2;\mathbb{Z})$ is generated over $\mathbb{Z}$ by the following $23$ generators;  
\begin{align*}
&S_4:=X_4,\quad S_{12}:=X_{12},\quad T_{12}:=Y_{12},\quad U_{12}:=X_6^2,\quad 
S_{16}:=X_{10}X_6,\\
&T_{16}:=X_{16},\quad S_{20}:=X_{10}^2,\quad S_{24}:=X_{24},\quad T_{24}:=X_6X_{18},\\ 
&S_{28}:=X_{28},\quad T_{28}:=X_{10}X_{18},\quad S_{36}:=X_{36},\quad T_{36}:=X_{18}^2,\\
&U_{36}:=X_6X_{30},\quad S_{40}:=X_{40},\quad T_{40}:=X_{10}X_{30},\quad S_{48}:=X_{48},\\ 
&T_{48}:=X_{18}X_{30},\quad S_{52}:=X_{42}X_{10},\quad S_{60}:=X_{30}^2,\quad T_{60}:=X_{18}X_{42},\\
&S_{72}:=X_{30}X_{42},\quad S_{84}:=X_{42}^2.
\end{align*}
\end{Cor}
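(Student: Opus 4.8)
The plan is to obtain the statement as a formal consequence of Igusa's theorem by isolating the part of weight divisible by $4$. By that theorem the $\mathbb{Z}$-algebra $A^{(2)}(\Gamma_2;\mathbb{Z})$ is generated by the fourteen homogeneous forms $X_4,X_6,X_{10},X_{12},Y_{12},X_{16},X_{18},X_{24},X_{28},X_{30},X_{36},X_{40},X_{42},X_{48}$. I would grade this ring by weight modulo $4$, turning it into a $\mathbb{Z}/2\mathbb{Z}$-graded ring whose degree-$0$ component is precisely $A^{(4)}(\Gamma_2;\mathbb{Z})$. Of the fourteen generators, nine have weight $\equiv 0\pmod 4$ (these are $X_4,X_{12},Y_{12},X_{16},X_{24},X_{28},X_{36},X_{40},X_{48}$) and five have weight $\equiv 2\pmod 4$ (these are $X_6,X_{10},X_{18},X_{30},X_{42}$).

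The first key step is the grading argument. Since the surjection from the polynomial ring on the fourteen generators onto $A^{(2)}(\Gamma_2;\mathbb{Z})$ is a homomorphism of graded rings, any form of weight $4m$ is a $\mathbb{Z}$-linear combination of monomials of weight exactly $4m$, hence of monomials involving an even number of the five odd-weight generators. Grouping those odd factors into pairs shows that $A^{(4)}(\Gamma_2;\mathbb{Z})$ is generated over $\mathbb{Z}$ by the nine even-weight generators together with the $\binom{5}{2}+5=15$ products of pairs of odd-weight generators, giving twenty-four forms. Inspecting the list in the statement, these coincide with the twenty-three listed generators together with exactly one further pair, $X_6X_{42}$ (of weight $48$, corresponding to the only pair of odd generators absent from the list).

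The remaining step is to show that $X_6X_{42}$ is redundant, i.e. that it lies in the subalgebra generated by the other twenty-three forms. I would do this by substituting the defining formulas for $X_{42},X_{18},X_{16},X_{30},X_{28}$ and $Y_{12}$ into the product $X_6X_{42}$ and tracking the $2$- and $3$-adic denominators. Concretely one is led through the auxiliary identities $X_4X_{30}-X_6X_{28}=\tfrac16 X_{10}(X_6X_{18}-X_4^2X_{16})$, then $X_6X_{18}-X_4^2X_{16}=\tfrac1{12}X_{12}(X_6^2-X_4^3)$, and finally $X_6^2-X_4^3=1728(144X_{12}-Y_{12})$. Assembling these yields the clean integral identity
\[
X_6X_{42}=T_{48}+2\,S_4S_{12}S_{20}\,(144\,S_{12}-T_{12}),
\]
whose right-hand side is a $\mathbb{Z}$-polynomial in the twenty-three listed generators; combined with the previous step this proves the Corollary.

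I expect the final step to be the only real obstacle. Over $\mathbb{Q}$ the redundancy of $X_6X_{42}$ is immediate, but the naive substitution leaves an accumulated denominator (a factor $\tfrac1{72}$), so the subtle point is genuine integrality over $\mathbb{Z}$. What makes it work is that the combination $X_6^2-X_4^3$ is divisible by $1728$ through $Y_{12}$; this extra factor over-clears the denominator $72$ and leaves the integer coefficient $2$. Verifying these divisibilities — rather than the grading bookkeeping, which is routine — is where the argument must be carried out with care.
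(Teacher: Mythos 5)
Your proposal is correct, and it is in fact more of a proof than the paper itself supplies: the paper deduces this corollary from Igusa's theorem with the single phrase ``we have immediately the following property,'' giving no argument at all. Your mod-$4$ grading step is the routine part (any weight-$4m$ form is a $\mathbb{Z}$-linear combination of weight-$4m$ monomials in Igusa's fourteen homogeneous generators, and each such monomial contains an even number of factors of weight $\equiv 2 \bmod 4$), and it correctly yields $9+15=24$ candidate generators, of which exactly one, $X_6X_{42}$, is missing from the paper's list of $23$; identifying the integral redundancy of $X_6X_{42}$ as the only genuine issue is the right diagnosis. I checked your identity against the paper's definitions: from $12(X_6X_{42}-X_{18}X_{30})=X_4X_{10}(X_4X_{30}-X_6X_{28})$, your two auxiliary identities $6(X_4X_{30}-X_6X_{28})=X_{10}(X_6X_{18}-X_4^2X_{16})$ and $12(X_6X_{18}-X_4^2X_{16})=X_{12}(X_6^2-X_4^3)$, and $X_6^2-X_4^3=1728(144X_{12}-Y_{12})$, one obtains
\[
X_6X_{42}=X_{18}X_{30}+2\,X_4X_{12}X_{10}^2(144X_{12}-Y_{12})
=T_{48}+2\,S_4S_{12}S_{20}(144\,S_{12}-T_{12}),
\]
exactly as you claim, so $X_6X_{42}$ lies in the $\mathbb{Z}$-algebra generated by the listed forms and the corollary follows. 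One small bookkeeping slip in your commentary: the accumulated denominator of the naive substitution is $\tfrac1{12}\cdot\tfrac16\cdot\tfrac1{12}=\tfrac1{864}$, not $\tfrac1{72}$ (you dropped the $\tfrac1{12}$ coming from the defining formula of $X_{42}$ itself); since $1728/864=2$, this does not affect the final identity, which is correct as stated.
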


For later use, we introduce the Sturm bounds for Siegel modular forms of degree $2$. 
\begin{Thm}[Choi-Choie-Kikuta \cite{C-C-K}, Kikuta-Takemori \cite{Ki-Ta}]
\label{Stbd0}
Let $k$ be a positive integer and $p$ an any prime. Let $F\in M_{k}(\Gamma _2;\mathbb{Z}_{(p)})$. 
Suppose that $a_F(m,r,n)\equiv 0$ mod $p$ for any $m$, $r$, $n$  with 
\[m,\ n\le [k/10]\]
and $4mn-r^2\ge 0$. Then we have $F\equiv 0$ mod $p$.   
\end{Thm}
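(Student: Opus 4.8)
The plan is to descend to the classical genus-$1$ Sturm bound by restricting $F$ to the diagonal, and then to strip off factors of the weight-$10$ cusp form $X_{10}$ one at a time, inducting on $k$. (I take $k$ even, which covers the intended application with $4\mid k$; the weight stays even throughout the induction.) \textbf{Step 1 (diagonal restriction).} Put $q_1:=q_{11}$, $q_2:=q_{22}$ and restrict to $z_{12}=0$ (i.e.\ $q_{12}=1$): then
\[
(F|_{\mathrm{diag}})(\tau_1,\tau_2)=\sum_{m,n}b(m,n)\,q_1^mq_2^n,\qquad b(m,n):=\sum_r a_F(m,r,n),
\]
is a modular form of weight $k$ in each of the two elliptic variables for $\Gamma_1$, with $b(m,n)\in\Z_{(p)}$. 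The hypothesis gives $b(m,n)\equiv0\pmod p$ whenever $m,n\le[k/10]$, and since $[k/10]\ge[k/12]$, applying the one-variable Sturm bound in $\tau_1$ to each $q_2$-coefficient and then in $\tau_2$ to each $q_1$-coefficient yields $F|_{\mathrm{diag}}\equiv0\pmod p$.

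\textbf{The main obstacle is the next step}, \textbf{Step 2 (division by $X_{10}$)}: converting the vanishing of $F|_{\mathrm{diag}}$ modulo $p$ into a divisibility $F\equiv X_{10}F_1\pmod p$ with $F_1\in M_{k-10}(\Gamma_2;\Z_{(p)})$. Over $\C$ this is Igusa's geometry: the reflection $\sigma\colon z_{12}\mapsto -z_{12}$ comes from $\mathrm{diag}(1,-1)\in GL_2(\Z)\subset\Gamma_2$ and fixes the diagonal, $F$ is $\sigma$-invariant because $k$ is even, so $F$ is even in $z_{12}$; hence $F|_{\mathrm{diag}}=0$ forces vanishing to order $\ge2$ along $z_{12}=0$, matching the double zero of $X_{10}$ there, and $F/X_{10}\in M_{k-10}(\Gamma_2)$. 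Running this modulo $p$ is delicate: from $F|_{\mathrm{diag}}\equiv0\pmod p$ one only gets, for each $(m,n)$, divisibility of the Laurent polynomial $\sum_r a_F(m,r,n)\,\zeta^r$ (where $\zeta=q_{12}$) by $\zeta-2+\zeta^{-1}$, which is far weaker than divisibility of the whole $q$-expansion by $X_{10}$. The unit leading coefficient $a_{X_{10}}(1,1,1)=1$ guarantees that the formal quotient makes sense to leading order, but showing that the quotient is again the reduction of a modular form is a genuine characteristic-$p$ descent of Igusa's divisibility; I would try to carry it out inside Igusa's explicit $\Z$-algebra, in which $X_{10}$ is one of the generators.

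\textbf{Step 3 (propagating the bound).} Granting $F\equiv X_{10}F_1\pmod p$, the box condition descends cleanly. I claim $a_{F_1}(m',r',n')\equiv0\pmod p$ for all $m',n'\le[k/10]-1=[(k-10)/10]$, by induction on $m'+n'$. The positive-definite support of $X_{10}$ has minimal-trace coefficients $a_{X_{10}}(1,\pm1,1)=1$ and $a_{X_{10}}(1,0,1)=-2$ (the last forced by $X_{10}|_{\mathrm{diag}}=0$). Comparing the coefficient of $q_1^{m'+1}q_2^{n'+1}$ on both sides of $F\equiv X_{10}F_1$, every contribution from a support point of $X_{10}$ of trace $\ge3$ lands on a Fourier coefficient of $F_1$ of strictly smaller trace whose diagonal entries still lie in the box, hence vanishes by the inductive hypothesis; what remains is
\[
0\equiv a_F(m'+1,r,n'+1)\equiv a_{F_1}(m',r+1,n')-2\,a_{F_1}(m',r,n')+a_{F_1}(m',r-1,n')\pmod p
\]
for every $r$ (using $m'+1,n'+1\le[k/10]$). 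Thus $r\mapsto a_{F_1}(m',r,n')$ has vanishing second difference mod $p$, so it is affine in $r$; as it vanishes identically for $|r|>2\sqrt{m'n'}$ (since $4m'n'-r^2\ge0$), it vanishes for all $r$. This is precisely where the double zero of $X_{10}$ is used, and the argument is uniform in $p$, including $p=2,3$.

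Iterating Steps~1--3 lowers the weight by $10$ at each stage; after finitely many steps the weight drops below $10$, where $M_k(\Gamma_2)$ is spanned by an Eisenstein series with integral $q$-expansion and constant term $1$ (e.g.\ $X_4=G_4$, $X_6=G_6$, $G_4^2$), and the box condition reduces to $a_F(0,0,0)\equiv0\pmod p$, forcing $F\equiv0\pmod p$. The routine ingredients are the genus-$1$ Sturm bound and the elementary second-difference computation; the crux of the whole argument is the characteristic-$p$ divisibility by $X_{10}$ in Step~2.
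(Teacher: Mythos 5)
Your overall plan --- restrict to the diagonal, apply the genus-one Sturm bound in each variable, divide by $X_{10}$ modulo $p$, and induct on the weight --- is indeed the strategy behind this theorem in the literature, and two of your three steps are sound. Step 1 works because $[k/10]\ge[k/12]$ and each $q_1$- (resp.\ $q_2$-) coefficient of $F|_{\mathrm{diag}}$ is an elliptic modular form for $\Gamma_1$ with coefficients in $\mathbb{Z}_{(p)}$; Step 3's second-difference argument, using $a_{X_{10}}(1,\pm1,1)=1$, $a_{X_{10}}(1,0,1)=-2$ and the support condition $4m'n'-r'^2\ge 0$, correctly propagates the hypothesis from $F$ to $F_1$. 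Note that the paper you are reading does not prove this statement at all --- it quotes it from \cite{C-C-K} and \cite{Ki-Ta} --- but its own Lemma \ref{LemA1} is precisely the Hermitian analogue of your Step 2, and the way that lemma is proved (via the structure theorem over $\mathbb{Z}$, Corollary \ref{Cor:S_gen}) shows exactly what your argument is missing.

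The genuine gap is Step 2, which you flag but do not close, and it is not a technicality: it is essentially the whole content of the theorem. Igusa's characteristic-zero fact (kernel of the diagonal restriction $W$ equals $X_{10}\,M_{k-10}(\Gamma_2)$) gives you nothing mod $p$ by itself. From $W(F)\equiv 0 \pmod p$ you know $\tfrac1p W(F)$ is a $p$-integral element of the image of $W$ over $\mathbb{Q}$; to conclude you must produce $G\in M_k(\Gamma_2;\mathbb{Z}_{(p)})$ with $W(G)=\tfrac1p W(F)$, i.e.\ you need the image of $W$ on $p$-integral forms to be \emph{saturated} in its target (equivalently, that reduction mod $p$ commutes with taking the kernel of $W$); only then does $F-pG\in X_{10}M_{k-10}(\Gamma_2;\mathbb{Q})$ together with $v_p(X_{10})=0$ yield a $p$-integral $F_1$. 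That saturation is exactly what the structure theory supplies --- for $p\ge 5$ via the mod $p$ structure theorem used in \cite{C-C-K}, and for $p=2,3$ via Igusa's fifteen integral generators and explicit computation in \cite{Ki-Ta} --- and it is where the distinction between $p\ge5$ and $p=2,3$ actually lives, so it cannot be waved through "uniformly in $p$". Two smaller points: your proof covers only even $k$, whereas the statement is for every positive integer $k$ (odd-weight forms exist from weight $35$ on, and handling them is one of the points of \cite{Ki-Ta}); and in the terminal low-weight step you implicitly use that $G_4$, $G_6$, $G_4^2$ have $p$-integral Fourier coefficients for all $p$, which is true but should be justified by citing that $X_4=G_4$ and $X_6=G_6$ are among Igusa's integral generators.
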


\subsection{Structure over $\Z[1/2,1/3]$}
We set $H_{4}:=E_4$ and 
\begin{align*}
&H_8 :=-\frac{61}{2^{10}\cdot 3^2 \cdot 5^2} (E_{8}-H_4^2),\\
&F_{10} := -\frac{277}{2^9\cdot 3^3\cdot 5^2 \cdot 7} (E_{10}-H_4 \cdot E_6),\\
&H_{12} := -\frac{19\cdot 691\cdot 2659}{2^{11}\cdot 3^7\cdot 5^3\cdot 7^2\cdot 73},\\
&~~~~~~~~~\times \left(E_{12}- 
   \frac{3^2\cdot 7^2}{691}H _4^3 -\frac{2\cdot 5^3}{691}H    _6^2
+ \frac{2^9\cdot 3^4\cdot 5^2\cdot 7^2\cdot 6791}{19\cdot 691\cdot 2659}H    _4\cdot H   _8\right).
\end{align*}
We define the graded ring $A^{(m)}(U_2({\mathcal O});R)$ over $R$ by  
\[A^{(m)}(U_2({\mathcal O});R)=\bigoplus _{k\in m\mathbb{Z}}M_k(U_2({\mathcal O});R).\]

\begin{Thm}[Kikuta-Nagaoka \cite{Ki-Na} (cf. Dern-Krieg \cite{D-K})] 
\label{Thm:Ki-Na}
Then all of $H_4$, $E_6$, $H_8$, $F_{10}$, $H_{12}$ have Fourier coefficients in $\mathbb{Z}$ and they generate the graded ring 
\[
A^{(2)}(U_2({\mathcal O});\Z [1/2,1/3]). \]
Moreover, these $5$ generators are algebraically independent over $\mathbb{C}$
and we have 
\[
H_4|_{\mathbb{S}_2}=X_4, \quad E_6|_{\mathbb{S}_2}=X_6,\quad 
H_8|_{\mathbb{S}_2}=0,\quad F_{10}|_{\mathbb{S}_2}=6X_{10},\quad H_{12}|_{\mathbb{S}_2}=X_{12}.
\]
\end{Thm}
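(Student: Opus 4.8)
The plan is to establish the four assertions of the theorem in turn: (i) that $H_4$, $E_6$, $H_8$, $F_{10}$, $H_{12}$ have $\Z$-integral Fourier coefficients; (ii) that over $\C$ they are algebraically independent and generate $A^{(2)}(U_2(\mathcal{O});\C)$; (iii) the restriction identities along $\mathbb{S}_2$; and (iv) the integral statement that they generate $A^{(2)}(U_2(\mathcal{O});R)$ over $R:=\Z[1/2,1/3]$. Parts (i)--(iii) are computational and feed into (iv), which carries the real content.

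For (i) I would read off the Fourier coefficients of $H_4=E_4$, $E_6$, and of $H_8,F_{10},H_{12}$ from the explicit formula of Theorem \ref{GHE}. Each of the last three is, by construction, a fixed rational multiple of a difference $E_{2j}-(\text{products of lower weight})$ whose rank-$0$ coefficient vanishes, and the normalizing constants are chosen exactly to force the remaining coefficients into $\Z$; verifying this reduces to a von Staudt--Clausen/Kummer-type analysis of $B_k$, of the generalized Bernoulli numbers $B_{k-1,\chi_{-4}}$, and of the divisor sums $G_{\boldsymbol K}(m,N)$ appearing in Theorem \ref{GHE}. For (iii) I would restrict the expansions via $\dot q_{12}\mapsto 1$, $\ddot q_{12}\mapsto q_{12}$ and, to prove each identity $H_4|_{\mathbb{S}_2}=X_4$, $E_6|_{\mathbb{S}_2}=X_6$, $F_{10}|_{\mathbb{S}_2}=6X_{10}$, $H_8|_{\mathbb{S}_2}=0$, $H_{12}|_{\mathbb{S}_2}=X_{12}$, compare finitely many Fourier coefficients of both sides and invoke the Siegel Sturm bound (Theorem \ref{Stbd0}) to upgrade the match to an equality. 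The point to stress for later is that $6$ is a unit in $R$, which is what makes the lift in (iv) integral; and that $H_8|_{\mathbb{S}_2}=0$ distinguishes $H_8$ as the "restriction-killing" generator.

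For (ii) I would invoke Dern--Krieg \cite{D-K}, by which $A^{(2)}(U_2(\mathcal{O});\C)$ is a polynomial algebra on generators of weights $4,6,8,10,12$. Since each of the five displayed forms equals a nonzero scalar times the Eisenstein series of its own weight plus products of strictly lower weight, passing from an Eisenstein-based free system to $\{H_4,E_6,H_8,F_{10},H_{12}\}$ is an invertible weight-graded (unipotent) change of generators, so these five are algebraically independent over $\C$ and generate $A^{(2)}(U_2(\mathcal{O});\C)$. I would then prove (iv) by induction on the weight $k$. Given $F\in M_k(U_2(\mathcal{O});R)$, its restriction lies in $M_k(\Gamma_2;R)$; by Igusa \cite{Igu2} together with the explicit formulas preceding Corollary \ref{Cor:S_gen}, the even-weight Siegel ring over $R$ is the polynomial ring $R[X_4,X_6,X_{10},X_{12}]$, because all remaining even Igusa generators have defining denominators that are products of powers of $2$ and $3$ and hence become $R$-polynomials in these four. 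Writing $F|_{\mathbb{S}_2}=Q(X_4,X_6,X_{10},X_{12})$ with $Q\in R[\,\cdot\,]$, I set the lift $\widetilde Q:=Q(H_4,E_6,\tfrac16 F_{10},H_{12})$, which by (i) and $\tfrac16\in R$ lies in $M_k(U_2(\mathcal{O});R)$ and by (iii) satisfies $\widetilde Q|_{\mathbb{S}_2}=F|_{\mathbb{S}_2}$. Thus $F-\widetilde Q\in M_k(U_2(\mathcal{O});R)$ vanishes on $\mathbb{S}_2$; by (ii) and (iii) the restriction map on $A^{(2)}(U_2(\mathcal{O});\C)$ is the surjection $\C[H_4,E_6,H_8,F_{10},H_{12}]\to\C[X_4,X_6,X_{10},X_{12}]$ whose kernel is the principal ideal $(H_8)$, so $F-\widetilde Q=H_8\cdot G$ for a unique $G\in M_{k-8}(U_2(\mathcal{O});\C)$. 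If $G$ is $R$-integral, the induction hypothesis puts $G$, and hence $F$, in $R[H_4,E_6,H_8,F_{10},H_{12}]$.

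The hard part, and the crux of the whole argument, is exactly the $R$-integrality of the quotient $G=(F-\widetilde Q)/H_8$: one must show that dividing by $H_8$ introduces no primes beyond $2$ and $3$. I would argue inside $R[\![\dot{\boldsymbol q}]\!]$: fixing a monomial order on the index semigroup $\Lambda_2(\boldsymbol K)$ adapted to the expansion variables $\dot q_1,\dot q_2$, I would show, using the explicit coefficients from (i), that the leading coefficient of $H_8$ is a unit of $R$ (a sign times a power of $2$ and $3$); the division algorithm in $R[\![\dot{\boldsymbol q}]\!]$ then yields $H_8\mid(F-\widetilde Q)$ already over $R$, forcing $G\in M_{k-8}(U_2(\mathcal{O});R)$ and closing the induction. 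Pinning down that single $\{2,3\}$-unit leading coefficient of $H_8$ is the decisive step; everything else is either a finite Fourier-coefficient comparison or an appeal to the cited $\C$- and $\Z$-structure results.
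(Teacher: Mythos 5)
The paper never proves this theorem: it is quoted from Kikuta--Nagaoka \cite{Ki-Na} (with the $\C$-structure coming from Dern--Krieg \cite{D-K}), so there is no internal proof to compare against. That said, your parts (ii)--(iv) are sound and in fact reproduce exactly the template this paper uses for its own Theorem \ref{Thm1} and Lemma \ref{LemA1}: restrict to $\mathbb{S}_2$; note that over $R=\Z[1/2,1/3]$ Igusa's even-weight ring collapses to $R[X_4,X_6,X_{10},X_{12}]$ (all other generators in Corollary \ref{Cor:S_gen} and the displayed formulas have only $2$'s and $3$'s in their denominators); lift the polynomial using $\tfrac{1}{6}F_{10}$ (here $6\in R^{\times}$ is essential, as you note); conclude that the difference vanishes on $\mathbb{S}_2$ and is therefore $H_8$ times a form of weight $k-8$ (the principality of the kernel, which the paper attributes to \cite{D-K}); and recover $R$-integrality of the quotient from the fact that $H_8$ has a unit leading coefficient --- this last step is precisely the paper's Lemma \ref{Lem:ord} combined with $v_p(H_8)=0$, so your ``division algorithm'' formulation is a legitimate variant of the same Gauss-lemma mechanism.

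The genuine gap is your part (i). The $\Z$-integrality of $E_6$, $H_8$, $F_{10}$, $H_{12}$ is a substantive assertion of the theorem, and the proposed ``von Staudt--Clausen/Kummer-type analysis'' of $B_k$, $B_{k-1,\chi_{-4}}$ and the normalizing constants does not meet the actual difficulty. Already for $E_6$, Krieg's formula (Theorem \ref{GHE}) gives the rank-two coefficients as $-2^5\cdot 3^2\cdot 5^{-1}\cdot 7\sum_{d}d^5G_{\boldsymbol{K}}(5,4\det H/d^2)$ (see the proof of Lemma \ref{Lem0}), so integrality forces a congruence modulo $5$ for these divisor sums, not mere bookkeeping of Bernoulli denominators. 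Worse, the normalization of $H_{12}$ carries $2^{11}\cdot 3^7\cdot 5^3\cdot 7^2\cdot 73$ in its denominator, so one must prove that every Fourier coefficient of the relevant combination of $E_{12}$, $H_4^3$, $E_6^2$, $H_4H_8$ is divisible by $5^3\cdot 7^2\cdot 73$ --- congruences at primes like $73$ that are invisible to von Staudt--Clausen. The known proof (in \cite{Ki-Na}, and replicated in this very paper's Lemma \ref{Lem2} for $I_{16}$, $K_{14}$, $K_{18}$) uses a different idea: via Krieg's Maass lift / Kohnen plus-space correspondence \cite{Kri}, one identifies each form with ${\rm Lift}(h)$ for an explicit $h\in\Z[\theta^2,f_2]$ lying in the plus space (plus integral correction terms), and the lift formula manifestly preserves integrality. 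Without this mechanism, or a genuine substitute for it, part (i) of your proposal does not go through, and with it falls the integral input your induction in (iv) relies on.
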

\begin{Rem}
The ring $A^{(2)}(U_2({\mathcal O});R)$ coincides with the ring of the full space $A^{(1)}(U_2({\mathcal O});R)$ of symmetric Hermitian modular forms, because of $M_k(U_2({\mathcal O}))=\{0\}$ for odd $k$.  
\end{Rem}

Let $p$ be a prime and $\mathbb{Z}_{(p)}$ the localization of $\Z$ at the prime ideal $(p)=p\Z$, namely,
$\mathbb{Z}_{(p)}:=\mathbb{Q}\cap\mathbb{Z}_p$. 
The following lemma will be needed in later sections.
For a formal Fourier series of the form $F=\sum a_F(H)e^{2\pi i {\rm tr}(HZ)}$, 
we define $v_p(F)\in\mathbb{Z}$ as usual by
\begin{equation}
\label{vp}
v_p(F):=\underset{{H\in\Lambda_2({\mathcal O})}}{\text{inf}}\text{ord}_p(a_F(H)).
\end{equation}
\begin{Lem}
\label{Lem:ord}
For any $F_i=\sum a_
{F_i}(H)e^{2\pi i {\rm tr}(HZ)}$ ($i=1$, $2$) with $v_p(F_i)<\infty $, 
 we have
\[
v_p(F_1F_2)=v_p(F_1)+v_p(F_2).
\]
\end{Lem}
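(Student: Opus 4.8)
The plan is to prove the identity by a Gauss-lemma argument: after normalizing each factor so that its ``content'' is a $p$-adic unit, reduce modulo $p$ and exploit that the relevant coefficient ring is an integral domain. The positivity condition $0\le H$ on the Fourier indices plays no role; only the ring structure of $R[\![\dot{\boldsymbol{q}}]\!]$ matters.

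First I would record the scaling behaviour of $v_p$. From $\ord_p(cx)=\ord_p(c)+\ord_p(x)$ for $c\in\Q^{\times}$ one immediately obtains $v_p(p^aF)=a+v_p(F)$ for every $a\in\Z$. By hypothesis (and the convention $v_p\in\Z$ fixed in \eqref{vp}) each $v_p(F_i)$ is a finite integer, so I may set $\tilde F_i:=p^{-v_p(F_i)}F_i$. Then $v_p(\tilde F_i)=0$, and by the additivity just noted it suffices to prove $v_p(\tilde F_1\tilde F_2)=0$.

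Next I would pass to the reduction modulo $p$. The equality $v_p(\tilde F_i)=0$ says precisely that every Fourier coefficient of $\tilde F_i$ lies in $\Z_{(p)}$ and at least one of them is a $p$-adic unit; hence $\tilde F_i\in\Z_{(p)}[\![\dot{\boldsymbol{q}}]\!]$. The coefficientwise reduction map $\Z_{(p)}[\![\dot{\boldsymbol{q}}]\!]\to\F_p[\![\dot{\boldsymbol{q}}]\!]$, $G\mapsto\overline{G}$, is a ring homomorphism, and for any $G\in\Z_{(p)}[\![\dot{\boldsymbol{q}}]\!]$ one has $v_p(G)=0$ if and only if $\overline{G}\neq 0$. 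Thus $\overline{\tilde F_1}\neq 0$ and $\overline{\tilde F_2}\neq 0$. The key remaining point is that $\F_p[\![\dot{\boldsymbol{q}}]\!]=\F_p[\dot{q}_{12}^{\pm 1},\ddot{q}_{12}^{\pm}][\![\dot{q}_1,\dot{q}_2]\!]$ is an integral domain: the Laurent polynomial ring $\F_p[\dot{q}_{12}^{\pm 1},\ddot{q}_{12}^{\pm}]$ is a localization of a polynomial ring over a field and hence a domain, and the formal power series ring in $\dot{q}_1,\dot{q}_2$ over any domain is again a domain. Granting this, $\overline{\tilde F_1\tilde F_2}=\overline{\tilde F_1}\,\overline{\tilde F_2}\neq 0$, so $v_p(\tilde F_1\tilde F_2)=0$, and unwinding the normalization yields $v_p(F_1F_2)=v_p(F_1)+v_p(F_2)$.

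The main obstacle is the presence of the Laurent variables $\dot{q}_{12},\ddot{q}_{12}$, whose exponents $r,s$ range over all of $\Z$: one therefore cannot argue by comparing a single globally minimal monomial in all four variables, as one would for an ordinary power series. This is exactly what is circumvented by viewing the ring as formal power series in $\dot{q}_1,\dot{q}_2$ over the Laurent polynomial ring in $\dot{q}_{12},\ddot{q}_{12}$ and invoking the domain property in that order, which is why I would structure the argument as above rather than through an explicit leading-term computation.
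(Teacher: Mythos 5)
Your proof is correct, but it takes a genuinely different route from the paper's. The paper's proof is a one-line reference to the leading-term version of Gauss's lemma: following \cite{Ki-Na2}, one introduces a total order on $\Lambda_2({\mathcal O})$ compatible with addition, picks for each $F_i$ the minimal index $H_i$ with $\ord_p(a_{F_i}(H_i))=v_p(F_i)$, and observes that in the coefficient of $H_1+H_2$ of $F_1F_2$ the term $a_{F_1}(H_1)a_{F_2}(H_2)$ has strictly smaller $p$-order than every other term, forcing $\ord_p(a_{F_1F_2}(H_1+H_2))=v_p(F_1)+v_p(F_2)$. You instead normalize each factor to have $v_p=0$, reduce coefficientwise modulo $p$, and invoke the integral-domain property of $\F_p[\dot q_{12}^{\pm 1},\ddot q_{12}^{\pm 1}][\![\dot q_1,\dot q_2]\!]$; this trades the construction of an explicit order on the index set for standard commutative algebra (a localization of a polynomial ring is a domain, and power series over a domain form a domain), which is arguably cleaner and makes the Gauss-lemma structure transparent, though the domain property is itself ultimately certified by a lowest-term argument, so the two proofs share the same core mechanism. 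One caveat: your claim that the positivity condition $0\le H$ ``plays no role'' is only half-true. It is precisely $H\ge 0$ (which forces $r^2+s^2\le 4mn$, hence only finitely many $(r,s)$ for each fixed $(m,n)$) that places these Fourier series inside $R[\![\dot{\boldsymbol{q}}]\!]=R[\dot q_{12}^{\pm 1},\ddot q_{12}^{\pm 1}][\![\dot q_1,\dot q_2]\!]$ and makes the product $F_1F_2$ well defined there; for series supported on arbitrary indefinite elements of $\Lambda_2({\mathcal O})$ the product need not exist at all. Since both the paper and your argument tacitly work inside $R[\![\dot{\boldsymbol{q}}]\!]$, and the lemma is only ever applied to modular forms, this is harmless --- but the positivity is hidden in that membership rather than genuinely absent.
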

\begin{proof}
We can easily prove this property, 
if we define an order for two elements of $\Lambda _2({\mathcal O})$ in the same way as in \cite{Ki-Na2}.    
\end{proof}

We will need the Sturm bounds in the later sections.
\begin{Thm}[Kikuta-Nagaoka \cite{Ki-Na2}]
\label{Thm:Na-Ta}
Let $p$ be a prime with $p\ge 5$. 
Suppose that $F\in M_k(U_2({\mathcal O});\mathbb{Z}_{(p)})$ satisfies
that $a_F(m,r,s,n)\equiv 0$ mod $p$ for all $m$, $n\le [k/8]$. 
Then we have $F\equiv 0$ mod $p$.   
\end{Thm}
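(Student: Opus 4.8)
The plan is to run an induction on the weight $k$, using the weight-$8$ generator $H_8$ — the distinguished form annihilated by restriction to the Siegel half-space, since $H_8|_{\mathbb{S}_2}=0$ by Theorem \ref{Thm:Ki-Na} — as the engine of the descent, exactly as the cusp form $\delta$ (weight $12$) drives the elliptic Sturm bound and $X_{10}$ (weight $10$) drives the Siegel bound of Theorem \ref{Stbd0}. First I would record the standing reductions. Because $p\ge 5$ we have $\mathbb{Z}[1/2,1/3]\subseteq\mathbb{Z}_{(p)}$, so after localization Theorem \ref{Thm:Ki-Na} expresses every $F\in M_k(U_2({\mathcal O});\mathbb{Z}_{(p)})$ as a $\mathbb{Z}_{(p)}$-polynomial in $H_4,E_6,H_8,F_{10},H_{12}$; moreover Lemma \ref{Lem:ord} shows that $\bigoplus_k M_k(U_2({\mathcal O});\mathbb{F}_p)$ is an integral domain, which legitimizes the cancellation used below. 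It then suffices to prove $v_p(F)>0$ under the hypothesis.

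Next I would split $F$ according to its restriction. Collecting the $H_8$-free monomials, write $F=G+H_8\cdot F'$ with $G\in\mathbb{Z}_{(p)}[H_4,E_6,F_{10},H_{12}]$ and $F'\in M_{k-8}(U_2({\mathcal O});\mathbb{Z}_{(p)})$; the purpose of isolating $H_8$ is that $F|_{\mathbb{S}_2}=G|_{\mathbb{S}_2}$. On Fourier coefficients the restriction reads $a_{F|_{\mathbb{S}_2}}(m,s,n)=\sum_r a_F(m,r,s,n)$, so the hypothesis $a_F(m,r,s,n)\equiv 0\pmod{p}$ for $m,n\le[k/8]$ forces $a_{F|_{\mathbb{S}_2}}(m,s,n)\equiv 0\pmod{p}$ for all $m,n\le[k/8]$, hence in particular for $m,n\le[k/10]$. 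The Siegel bound (Theorem \ref{Stbd0}) then gives $G|_{\mathbb{S}_2}=F|_{\mathbb{S}_2}\equiv 0\pmod{p}$. Since $H_4,E_6,F_{10},H_{12}$ restrict to $X_4,X_6,6X_{10},X_{12}$ (Theorem \ref{Thm:Ki-Na}), which are algebraically independent and remain so modulo $p$ (here $6$ is a $p$-unit because $p\ge 5$), restriction is injective modulo $p$ on $\mathbb{F}_p[H_4,E_6,F_{10},H_{12}]$, whence $G\equiv 0\pmod{p}$ and therefore $F\equiv H_8\cdot F'\pmod{p}$.

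It remains to descend to $F'$, and this is where I expect the real work to lie. I would first observe that, as $H_8|_{\mathbb{S}_2}=0$, every Fourier coefficient of $H_8$ with a vanishing diagonal entry is zero; hence its lowest nonzero coefficients occur at rank-$2$ indices with $m_0,n_0\ge 1$, the minimal case being $m_0=n_0=1$. Granting that $a_{H_8}(m_0,r_0,s_0,n_0)$ is a $p$-unit at such a minimal index (a fact to be read off from the explicit $H_8=-\tfrac{61}{2^{10}\cdot 3^2\cdot 5^2}(E_8-H_4^2)$, and the heart of the matter), the standard lowest-term argument, relative to the order on $\Lambda_2({\mathcal O})$ used in the proof of Lemma \ref{Lem:ord}, converts the vanishing of $a_F$ in the range $m,n\le[k/8]$ into the vanishing of $a_{F'}$ in the range $m,n\le[(k-8)/8]=[k/8]-1$; the index shift by $(m_0,n_0)=(1,1)$ is exactly what produces the clean loss of one unit, and so pins down the constant $8$. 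The inductive hypothesis applied to the weight-$(k-8)$ form $F'$ then yields $F'\equiv 0\pmod{p}$, whence $F\equiv 0\pmod{p}$; the base cases $k\le 10$ are settled by the restriction step, together with the $p$-unit coefficient of $H_8$ at weight $8$. The main obstacle is thus twofold: locating and certifying the minimal $p$-unit coefficient of $H_8$ for every $p\ge 5$, and the bookkeeping of lowest-index contributions to the product $H_8\cdot F'$; a secondary technical point is the integrality of the decomposition $F=G+H_8F'$ over $\mathbb{Z}_{(p)}$, which I would deduce from the $\mathbb{Z}[1/2,1/3]$-structure of Theorem \ref{Thm:Ki-Na} by flat base change.
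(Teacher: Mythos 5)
A preliminary remark: the paper does not actually prove Theorem \ref{Thm:Na-Ta} --- it imports it from \cite{Ki-Na2}, with the remark that the proof there must be modified along the lines of \cite{Na-Ta}, Proposition 4.5. So the honest comparison is with the paper's proof of the analogous Theorem \ref{Thm2} (the case $p=2,3$, $4\mid k$), and against that your skeleton is the same one: restrict to $\mathbb{S}_2$, apply the degree-$2$ Siegel Sturm bound (Theorem \ref{Stbd0}, legitimate since $[k/8]\ge[k/10]$), peel off one factor of $H_8$ modulo $p$, shift the index range down by one using the lowest Fourier coefficients of $H_8$, and induct on the weight. The one genuinely different sub-step is how you obtain $F\equiv H_8F'\pmod p$: you use the $\mathbb{Z}_{(p)}$-polynomial structure (Theorem \ref{Thm:Ki-Na} localized, available only because $p\ge 5$) and kill the $H_8$-free part $G$ by mod-$p$ algebraic independence of $X_4,X_6,X_{10},X_{12}$; the paper's Lemma \ref{LemA1} instead lifts $\tfrac1p F|_{\mathbb{S}_2}$ through Igusa's integral generators (Corollary \ref{Cor:S_gen}) and quotes Dern--Krieg \cite{D-K} for the fact that a form vanishing on $\mathbb{S}_2$ is divisible by $H_8$, plus the valuation Lemma \ref{Lem:ord} for integrality of $F'$. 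Your route buys simplicity, but it collapses for $p=2,3$, which is exactly why the paper argues the other way for its own theorem.

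The two gaps you flag are real but both fillable with material essentially already in the paper. (i) Mod-$p$ algebraic independence of $X_4,X_6,X_{10},X_{12}$ for $p\ge5$: if an isobaric $P$ satisfies $P(X_4,X_6,X_{10},X_{12})\equiv 0\pmod p$, then $\tfrac1p P(X_4,X_6,X_{10},X_{12})$ lies in $M_k(\Gamma_2;\mathbb{Z}_{(p)})$, which equals $\mathbb{Z}_{(p)}[X_4,X_6,X_{10},X_{12}]_k$ (Igusa's generators beyond these four are $\mathbb{Z}[1/6]$-polynomials in them), so algebraic independence over $\mathbb{Q}$ forces every coefficient of $P$ into $p\mathbb{Z}_{(p)}$. (ii) The $p$-unit lowest coefficients of $H_8$: no computation with $E_8-H_4^2$ is needed, since the expansion displayed in the paper's proof of Theorem \ref{Thm2} shows the lowest terms of $H_8$ sit at $m=n=1$ with coefficients $4$, $-2$, $1$; for $p\ge 5$ all of these are units (the extreme corners are $1$, which is what lets the paper run the same shift argument even at $p=2,3$, citing \cite{Ki-Ta} Lemma 5.1 for the bookkeeping). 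With those two points supplied, your induction closes, and the base cases work as you say.
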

\begin{Rem}
In \cite{Ki-Na2} Theorem 2, 
we obtained the similar type bounds as this statement, but they are not same.
We can modify the proof in the similar way as in \cite{Na-Ta} Proposition 4.5. 
\end{Rem}
In general, the Sturm bounds imply the ordinary vanishing conditions.   
\begin{Cor}
\label{Cor:Na-Ta}
Suppose that $F\in M_k(U_2({\mathcal O});\mathbb{Q})$ satisfies
that $a_F(m,r,s,n)=0$ for all $m$, $n\le [k/8]$. 
Then we have $F=0$.   
\end{Cor}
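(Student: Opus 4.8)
The plan is to deduce the rational (characteristic-zero) vanishing statement from the mod-$p$ Sturm bound of Theorem \ref{Thm:Na-Ta} by a clearing-denominators argument. First I would observe that the hypothesis involves only finitely many Fourier coefficients $a_F(m,r,s,n)$ with $m,n\le[k/8]$; indeed, for fixed bounded $m,n$ the positive-semidefiniteness constraint $0\le H$ forces $r^2+s^2=4(mr\text{-type determinant})\le 4mn$ to be bounded, so only finitely many $(r,s)$ occur. Hence $F$ has only finitely many rational Fourier coefficients in the relevant range, and all of them are assumed to be exactly $0$.

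Next, since $F\in M_k(U_2(\mathcal{O});\mathbb{Q})$, I would scale $F$ by a suitable nonzero rational constant so that it lands in $M_k(U_2(\mathcal{O});\mathbb{Z}_{(p)})$ for every prime $p$ simultaneously; concretely, clearing the denominators of the finitely many coefficients in the Sturm range is not enough by itself, so instead I would argue prime by prime. Fix an arbitrary prime $p\ge 5$. Because $F$ has rational coefficients, there is a nonzero integer $c$ (depending on $p$, but we only need its existence) with $cF\in M_k(U_2(\mathcal{O});\mathbb{Z}_{(p)})$; one may take $c$ to be a $p$-power clearing the $p$-adic denominators, using that $v_p(F)>-\infty$ in the notation of \eqref{vp} after rescaling into $\mathbb{Z}_{(p)}$. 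The vanishing hypothesis gives $a_{cF}(m,r,s,n)=0\equiv 0\pmod p$ for all $m,n\le[k/8]$, so Theorem \ref{Thm:Na-Ta} applies to $cF$ and yields $cF\equiv 0\pmod p$, i.e.\ $v_p(cF)\ge 1$, hence $v_p(F)\ge 1-\mathrm{ord}_p(c)$; iterating the same argument with $p^N F$ in place of $cF$ shows $v_p(F)$ cannot be finite, forcing $v_p(F)=+\infty$. Therefore every Fourier coefficient of $F$ is divisible by arbitrarily high powers of $p$.

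Running this for all primes $p\ge 5$ shows that each $a_F(H)\in\mathbb{Q}$ is divisible by $p^N$ for all $p\ge 5$ and all $N\ge 1$, which forces $a_F(H)=0$: a nonzero rational number has only finitely many primes in its numerator and bounded $p$-adic valuation at each. Hence $F=0$. The one point requiring care — and the main obstacle — is that Theorem \ref{Thm:Na-Ta} is stated only for $p\ge 5$, so the primes $2$ and $3$ are not directly available; but the conclusion $a_F(H)=0$ only needs divisibility by infinitely many primes (all $p\ge 5$ suffice), so the argument goes through without invoking the $p=2,3$ bounds. Alternatively, and perhaps more cleanly, I would phrase the induction via the multiplicativity Lemma \ref{Lem:ord} to track $v_p$, but the essential mechanism is simply that unbounded $p$-divisibility for even a single large prime already kills a rational coefficient.
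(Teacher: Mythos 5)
Your proposal is correct and takes essentially the same route as the paper, whose entire proof is the one line ``we may apply Theorem \ref{Thm:Na-Ta} to $F$ for infinitely many primes $p\ge 5$'': your prime-by-prime rescaling, the contradiction with $v_p=0$, and the remark that a nonzero rational number has bounded $p$-adic valuation are just a careful write-up of that argument. The only point both you and the paper leave implicit is why $F$ can be rescaled into $M_k(U_2({\mathcal O});\mathbb{Z}_{(p)})$ at all (i.e.\ why $v_p(F)>-\infty$, equivalently bounded denominators), which follows from the integral generators of Theorem \ref{Thm:Ki-Na}.
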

\begin{proof}
We may apply Theorem \ref{Thm:Na-Ta} to $F$ for infinitely many primes $p\ge 5$. 
\end{proof}

\section{Structure over $\mathbb{Z}$}
\subsection{Construction of generators}
\label{Const}
We set 
\begin{align*}
&I_{12} := 2^{-6}\cdot 3^{-3}(H _4^3 - E_6^2) + 2^4\cdot 3^2\cdot H _{12},\\
&J_{12}:=E_6^2,\\
&H_{16}:=2^{-1}\cdot 3^{-1}(E_6F_{10}-H _4^2H _8)\\
&I_{16} := 2^{-2}\cdot 3^{-1}(H _4 H _{12} - H_{16}),\\
&H_{20}:=2^{-2} \cdot 3^{-2}(F_{10}^2-H _4 H _8^2 -2^{2}\cdot 3  H _8 H_{12}),\\
&H_{24}:=2^{-3}\cdot 3^{-1} (H_{12}^2- H_4 H_{20}) - 2^{-1}\cdot 3^{-1}H_8\cdot I_{16}.
\end{align*}
In order to construct further generators, we use  temporarily the alphabets $K$, $L$. 
\begin{align*}
&K_{14}:=2^{-1}\cdot 3^{-1}(H _4 F_{10}-E_6 H_8),\\ 
&K_{18}:=2^{-2}\cdot 3^{-1}(E_6 H _{12} - H _4 K_{14}),\\
&K_{22}:=2^{-1}\cdot 3^{-1}(F_{10} H_{12}-H_8 K_{14}),\\
&K_{26}:=2^{-1}\cdot 3^{-1}(F_{10} I_{16}-H_{8} K_{18}),\\
&K_{30}:=2^{-1}\cdot 3^{-1}(E_6H_{24}-K_{14}I_{16}) +3^{-1}H_8F_{10}I_{12},\\
&L_{30}:=2^{-1}\cdot 3^{-1}(F_{10} H_{20}-H_8 K_{22}),\\ 
&K_{34}:=2^{-1}\cdot 3^{-1}(F_{10}H_{24}-H_{8}K_{26}), \\
&K_{42}:=2^{-2}\cdot 3^{-1}(H_{12} K_{30}-K_{14} H_{28})- 
 2^{-1}H_8I_{12}K_{22}. 
\end{align*}
From these definition and Theorem \ref{Thm:Ki-Na}, it is easy to see that
\begin{align*}
&K_{14}|_{\mathbb{S}_2}=X_4X_{10},\quad K_{18}|_{\mathbb{S}_2}=X_{18},\quad K_{22}|_{\mathbb{S}_2}=X_{10}X_{12},\\
&K_{26}|_{\mathbb{S}_2}=X_6X_{16},\quad K_{30}|_{\mathbb{S}_2}=X_{30},\quad L_{30}|_{\mathbb{S}_2}=X_{10}^3,\\
&K_{34}|_{\mathbb{S}_2}=X_{10}X_{24},\quad K_{42}|_{\mathbb{S}_2}=X_{42}. 
\end{align*}
Finally we put 
\begin{align*}
&I_{24}:=E_6K_{18},\\
&H_{28}:=2^{-1}\cdot 3^{-1}(H_4H_{24} - I_{28}) - 3^{-1}H_8^2I_{12},\\
&I_{28}:=2^{-1}\cdot 3^{-1}(F_{10}\cdot K_{18}-H_4\cdot H_8\cdot I_{16}),\\
&H_{36}:=2^{-1}\cdot 3^{-2}(H_{12}H_{24} - H_{20}I_{16}) +
  7\cdot 3^{-2}H_{8}H_{28}+ 3^{-1}H_8^3H_{12},\\
&I_{36}:=K_{18}^2,\qquad J_{36}:=E_6K_{30}\\
&H_{40}:=2^{-2}(H_4H_{36} - \frac{1}{2\cdot 3}F_{10}\cdot K_{30}) - 
 5\cdot 2^{-3}\cdot 3^{-1}H_4\cdot H_8\cdot H_{28}, \\
&~~~~~~~~~~~~~~+ 2^{-2}\cdot {H_8}^3\cdot H_{16} + 2^{-1}H_8\cdot I_{12}\cdot H_{20},\\
&I_{40}:=2^{-1}\cdot 3^{-1}(F_{10}K_{30}-H_4\cdot H_8\cdot H_{28}),\\ 
&H_{48}:=2^{-2}(H_{12}\cdot H_{36}-H_{24}^2)-2^{-3}H_8(H_{12} H_{28}+ 2 H_{40} \\
&~~~~~~~~~~~~~~+4 H_{10}^2 H_{12} H_8- 2 H_{20} H_4 H_8^2 - 2 H_{12} H_4 H_8^3+ 4 H_{20} H_8 I_{12} \\
&~~~~~~~~~~~~~~~~~~~~~~~~~~~~+ 2 H_{12} H_8^2 I_{12} - H_{24} I_{16} - 2 H_8^3 I_{16} + 2 I_{40}),\\
&I_{48}:=K_{18}K_{30},\\
&H_{52}:=2^{-1}\cdot 3^{-1}(F_{10} K_{42} - 2 F_{10}^2 H_{12}^2 H_8 - 2^2 H_{12} H_{20} H_8 I_{12}\\
&~~~~~~~~~~~~~~- 5 H_{10} H_{22} H_8 I_{12} - H_{28} H_8 I_{16} - H_8^3 I_{12} I_{16}),\\
&H_{60}:=K_{30}^2,\qquad I_{60}:=K_{18}K_{42},\qquad H_{72}:=K_{30}K_{42},\qquad H_{84}:=K_{42}^2.
\end{align*}
By the definition of them and from Theorem \ref{Thm:Ki-Na}, 
we can easily confirm the following property. 
\begin{Prop} 
We have
\begin{align*}
H_{k_1}|_{\mathbb{S}_2}=S_{k_1},\ I_{k_2}|_{\mathbb{S}_2}=T_{k_2}\quad \text{and}\quad   J_{k_3}|_{\mathbb{S}_2}=U_{k_3} 
\end{align*}
for each $k_1$, $k_2$, $k_3$ with 
\begin{align*}
&k_1\in \{4,12,16,20,24,28,32,36,40,48,52,60,72,84\},\\
&k_2\in \{12,16,24,28,36,40,48,60\},\quad k_3\in \{12,36\}.
\end{align*}
\end{Prop}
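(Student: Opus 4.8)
The plan is to exploit that the restriction map $F\mapsto F|_{\mathbb{S}_2}$, realised in Subsection 2.3 as the substitution $\dot{q}_{12}\mapsto 1$, $\ddot{q}_{12}\mapsto q_{12}$, is a homomorphism of the power-series rings $R[\![\dot{\boldsymbol{q}}]\!]\to R[\![\boldsymbol{q}]\!]$ and hence respects sums and products. Each $H_{k_1}$, $I_{k_2}$, $J_{k_3}$ is, by construction, a polynomial with coefficients in $\mathbb{Z}[1/2,1/3]$ in the five basic generators $H_4$, $E_6$, $H_8$, $F_{10}$, $H_{12}$ (possibly through the auxiliary forms $I_{16}$, $H_{20}$, $H_{24}$, $K_{14},\dots ,K_{42}$, which are themselves such polynomials). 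Consequently each restriction is computed by substituting the known values
\[
H_4|_{\mathbb{S}_2}=X_4,\quad E_6|_{\mathbb{S}_2}=X_6,\quad H_8|_{\mathbb{S}_2}=0,\quad F_{10}|_{\mathbb{S}_2}=6X_{10},\quad H_{12}|_{\mathbb{S}_2}=X_{12}
\]
from Theorem \ref{Thm:Ki-Na} into those polynomials.

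I would organise the verification by increasing weight, so that whenever a form is expressed through a lower-weight auxiliary, the restriction of the latter is already in hand. The first step is to record the restrictions of the intermediate building blocks, e.g.\ $I_{16}|_{\mathbb{S}_2}=2^{-2}3^{-1}(X_4X_{12}-X_6X_{10})=X_{16}$, $H_{20}|_{\mathbb{S}_2}=2^{-2}3^{-2}(6X_{10})^2=X_{10}^2$, $H_{24}|_{\mathbb{S}_2}=2^{-3}3^{-1}(X_{12}^2-X_4X_{10}^2)=X_{24}$, together with the $K$-restrictions already displayed in the text. Each target identity $H_{k_1}|_{\mathbb{S}_2}=S_{k_1}$, $I_{k_2}|_{\mathbb{S}_2}=T_{k_2}$, $J_{k_3}|_{\mathbb{S}_2}=U_{k_3}$ then reduces to an algebraic identity among the Igusa generators, to be matched directly against the definitions of $S_{\bullet}$, $T_{\bullet}$, $U_{\bullet}$ in Corollary \ref{Cor:S_gen} and against the defining relations of $Y_{12}$, $X_{16},\dots ,X_{48}$.

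Two structural features drive the collapse, and I would stress both. First, $H_8|_{\mathbb{S}_2}=0$: every correction term carrying a factor of $H_8$ --- and these are precisely the terms inserted to force integrality over $\mathbb{Z}$ rather than merely over $\mathbb{Z}[1/2,1/3]$ --- dies under restriction, so on the Siegel side only the main terms persist. Second, the factor $6$ in $F_{10}|_{\mathbb{S}_2}=6X_{10}$ must be carried through each product; it is exactly this factor that absorbs the rational prefactors, as in $2^{-2}3^{-2}\cdot 6^2=1$ for $H_{20}$ or $2^{-1}3^{-1}\cdot 6=1$ for $H_{16}$.

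The step I expect to be the main obstacle is purely computational rather than conceptual: the bulk verification for the high-weight forms $H_{40}$, $H_{48}$, $H_{52}$, whose definitions carry numerous $H_8$-laden correction terms and several nested auxiliaries. The care there lies in confirming that every $H_8$-term indeed restricts to $0$ and that the survivors, once the $6$'s from each $F_{10}$ are inserted, reproduce exactly the Igusa expressions $S_{40}=X_{40}$, $S_{48}=X_{48}$, $S_{52}=X_{42}X_{10}$. A transcription slip in these long formulas would be the most likely source of error, so I would independently cross-check each outcome against its prescribed weight and, where feasible, against the restriction computed by a second grouping of terms.
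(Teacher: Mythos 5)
Your proposal is correct and is exactly the paper's own argument: the paper disposes of this Proposition with the single remark that it follows ``by the definition of them and from Theorem \ref{Thm:Ki-Na}'', i.e., by substituting $H_4\mapsto X_4$, $E_6\mapsto X_6$, $H_8\mapsto 0$, $F_{10}\mapsto 6X_{10}$, $H_{12}\mapsto X_{12}$ into the defining polynomials, which is legitimate because the restriction $F\mapsto F|_{\mathbb{S}_2}$ is a ring homomorphism. Your sample verifications ($I_{16}$, $H_{20}$, $H_{24}$) and your two structural observations --- that every integrality-correction term dies because it carries a factor of $H_8$, and that the factor $6$ in $F_{10}|_{\mathbb{S}_2}=6X_{10}$ absorbs the rational prefactors --- are precisely the computations the paper leaves implicit.
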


\subsection{Integralities of generators}
\label{Int}
First our purpose is to prove that, all Fourier coefficients of the modular forms constructed in the previous subsection are integers. 
We start with proving several lemmas. 

We put $H_4=1+2^{4}\cdot 3S$, $E_6=1+2^3\cdot 3^2T$ with $S$, $T\in \Z[\![\dot{\boldsymbol{q}}]\!]$. 
\begin{Lem}
\label{Lem0}
We have $S\equiv T$ mod $2^2\cdot 3$. 
\end{Lem}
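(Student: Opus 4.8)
The statement $S \equiv T \pmod{2^2\cdot 3}$ compares the two power series obtained by stripping the constant term and a scalar factor from the Eisenstein series $H_4 = E_4$ and $E_6$. Writing $H_4 = 1 + 2^4\cdot 3\,S$ and $E_6 = 1 + 2^3\cdot 3^2\,T$, the claim is equivalent to the congruence
\begin{equation*}
2^3\cdot 3^2\,(H_4 - 1) \equiv 2^4\cdot 3\,(E_6 - 1) \pmod{2^7\cdot 3^3},
\end{equation*}
i.e. $3(H_4-1)\equiv 2(E_6-1)\pmod{2^4\cdot 3^2}$ after dividing by the common factor $2^3\cdot 3$. So the plan is to reduce everything to an explicit congruence between the nonconstant Fourier coefficients of $H_4$ and $E_6$ and to read it off from the Eisenstein coefficient formula in Theorem \ref{GHE}.

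First I would invoke Theorem \ref{GHE} to get closed forms for $a_{E_4}(H)$ and $a_{E_6}(H)$. Using $B_4 = -1/30$ and $B_6 = 1/42$, the rank-one coefficients are $-\tfrac{2k}{B_k}\sigma_{k-1}(\varepsilon(H))$, which give $240\,\sigma_3$ for $k=4$ and $-504\,\sigma_5$ for $k=6$; note $240 = 2^4\cdot 3\cdot 5$ and $504 = 2^3\cdot 3^2\cdot 7$, matching the factored constants $2^4\cdot 3$ and $2^3\cdot 3^2$ in the definitions of $S$ and $T$. Thus for rank-one $H$ the coefficients of $S$ and $T$ are (up to sign) $5\,\sigma_{k-1}(\varepsilon(H))$ and $-7\,\sigma_{k-1}(\varepsilon(H))$ respectively, and I would check $5\,\sigma_3(\ell) \equiv -7\,\sigma_5(\ell) \pmod{2^2\cdot 3}$ for every $\ell\ge 1$. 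Since $2^2\cdot 3 = 12$ and $\sigma_3(\ell)\equiv\sigma_5(\ell)\equiv \sigma_1(\ell)\pmod{12}$ (because $d^3\equiv d$ and $d^5\equiv d\pmod{12}$ for all $d$), this reduces to $5\,\sigma_1\equiv -7\,\sigma_1$, i.e. $12\,\sigma_1\equiv 0\pmod{12}$, which is automatic.

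The genuinely new work is the rank-two part, where the coefficient formula involves $G_{\boldsymbol K}(k-2,N)$ and the generalized Bernoulli number $B_{k-1,\chi_{-4}}$. Here I would again extract the scalar prefactor $\tfrac{4k(k-1)}{B_k\,B_{k-1,\chi_{-4}}}$, confirm that it carries exactly the factors $2^4\cdot 3$ (for $k=4$) and $2^3\cdot 3^2$ (for $k=6$) needed to define $S,T$ as integral series, and then reduce the remaining divisor-sum expression $\sum_{d\mid\varepsilon(H)}d^{k-1}G_{\boldsymbol K}(k-2,4\det H/d^2)$ modulo $2^2\cdot 3$. I expect the main obstacle to be precisely the $2$-adic and $3$-adic valuations of $B_{3,\chi_{-4}}$ and $B_{5,\chi_{-4}}$ together with the arithmetic of $G_{\boldsymbol K}$: one must verify that the prefactors really have the asserted valuations and that the mod-$12$ reductions of the two divisor sums agree, again exploiting $d^{k-1}\equiv d\pmod{12}$ to collapse both sums to the same shape. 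Once the rank-one and rank-two cases both yield term-by-term congruence of the coefficients of $S$ and $T$ modulo $2^2\cdot 3$, the lemma follows immediately.
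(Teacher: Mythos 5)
Your overall strategy is the paper's own: compare the Fourier coefficients of $S$ and $T$ term by term via Krieg's formula (Theorem \ref{GHE}), split into the rank-one and rank-two cases, and combine a congruence between the numerical prefactors with an Euler-type congruence between the divisor sums. But as written the argument has concrete defects. The elementary congruence you lean on, $d^{3}\equiv d\pmod{12}$, is false: for $d=2$ one has $8\not\equiv 2\pmod{12}$, hence $\sigma_3\not\equiv\sigma_1\pmod{12}$ (e.g.\ $\sigma_3(2)=9$ while $\sigma_1(2)=3$), and your reduction of both divisor sums to $\sigma_1$ collapses. The congruence that is true, and that the paper actually uses, is $d^{5}\equiv d^{3}\pmod{12}$ (indeed mod $24$), since $d^{5}-d^{3}=d^{3}(d-1)(d+1)$; combined with $5\equiv -7\pmod{12}$ this closes the rank-one case, so that case is repairable. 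A lesser point: your opening ``equivalent'' reformulations carry the moduli $2^{7}\cdot 3^{3}$ and $2^{4}\cdot 3^{2}$, which are too small by exactly the factor $2^{2}\cdot 3$; as stated both congruences are vacuously true (each side is already divisible by the stated modulus), so they are not equivalent to the lemma --- though you do not in fact use them afterwards.

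More seriously, the rank-two case, which is where the real content of the lemma lies, is left as a list of expected obstacles rather than proved. Concretely, from Theorem \ref{GHE} with $B_4=-1/30$, $B_{3,\chi_{-4}}=3/2$, $B_6=1/42$, $B_{5,\chi_{-4}}=-25/2$, the rank-two coefficients of $S$ and $T$ are
\begin{equation*}
-2^{2}\cdot 5\sum_{0<d\mid \varepsilon(H)}d^{3}G_{\boldsymbol{K}}(3,4\det H/d^{2})
\qquad\text{and}\qquad
-2^{2}\cdot 5^{-1}\cdot 7\sum_{0<d\mid \varepsilon(H)}d^{5}G_{\boldsymbol{K}}(5,4\det H/d^{2}),
\end{equation*}
respectively; note the denominator $5$ forced by $B_{5,\chi_{-4}}$, so the comparison must be carried out in $\Z_{(2)}\cap\Z_{(3)}$. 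The two facts one must verify are (i) $2^{2}\cdot 5\equiv 2^{2}\cdot 5^{-1}\cdot 7\pmod{2^{2}\cdot 3}$, which holds since $5^{-1}\equiv 5\pmod{12}$ and $4\cdot 35\equiv 4\cdot 5\pmod{12}$, and (ii) the termwise Euler congruence $(de)^{5}\equiv (de)^{3}\pmod{24}$ applied inside the $G_{\boldsymbol{K}}$-sums, where the mod-$24$ strength is what absorbs the factor $\tfrac{1}{2}$ occurring in $G_{\boldsymbol{K}}$ when $\chi_{-4}(N)\neq 0$. Your proposal flags the ``valuations of the prefactors'' as the obstacle but verifies neither point, and the tool you propose to use there, $d^{k-1}\equiv d\pmod{12}$, is again the false congruence. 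So the proposal identifies the correct skeleton --- the same one the paper uses --- but does not close the proof.
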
 
\begin{proof}
For $H\in \Lambda _2({\mathcal O})$ with ${\rm rank}(H)=1$, we have 
\begin{align*}
&a_{H_4}(H)= 2^4\cdot 3 \cdot 5\sum _{0<d\mid \varepsilon (H)}d^{3}, \\
&a_{E_6}(H)=-2^3\cdot 3^2 \cdot 7 \sum _{0<d\mid \varepsilon (H)}d^{5}. 
\end{align*}
The assertion (for ${\rm rank}(H)=1$) follows from $5\equiv -7$ mod $2^2 \cdot 3$ and an application of the Euler congruence 
\[\sum _{0<d\mid \varepsilon (H)}d^{3}\equiv \sum _{0<d\mid \varepsilon (H)}d^{5} \bmod{2^2 \cdot 3}. \]

Let $H\in \Lambda _2({\mathcal O})$ with ${\rm rank}(H)=2$. Then 
\begin{align*}
&a_{H_4}(H)=-2^6\cdot 3\cdot 5\sum _{0<d\mid \varepsilon (H)}d^3G_{{\boldsymbol K}}(3,4\det H/d^2 ),\\
&a_{E_6}(H)=-2^5\cdot 3^2\cdot 5^{-1}\cdot 7 \sum _{0<d\mid \varepsilon (H)}d^5G_{{\boldsymbol K}}(5,4\det H/d^2 ).
\end{align*}
The Euler congruence implies that 
\[\sum _{0<d\mid \varepsilon (H)}d^3G_{{\boldsymbol K}}(3,4\det H/d^2 )\equiv \sum _{0<d\mid \varepsilon (H)}d^5G_{{\boldsymbol K}}(5,4\det H/d^2 ) \bmod 2^2\cdot 3. \] 
On the other hand, we have 
\begin{align*}
2^2\cdot 5\equiv 2^2\cdot 5^{-1}\cdot 7 \bmod{2^2\cdot 3}. 
\end{align*}
Therefore the assertion holds. 
\end{proof}

By this lemma, we can put $T=S+2^2\cdot 3 U$ with $U\in \Z[\![\dot{\boldsymbol{q}}]\!]$. 
Then we have 
\begin{align*}
&H_4=1+2^4\cdot 3 S, \\ 
&E_6=1+2^3\cdot 3^2 S+2^5\cdot 3^3 U. 
\end{align*}
This is the one of important fact for our arguments on integralities of generators. 
 
\paragraph{Forms of weight  $\bold{12}$}
We remark that $J_{12}=E_6^2\in M_{12}(U_2({\mathcal O});\Z)$ follows from $E_6\in M_{6}(U_2({\mathcal O});\Z)$. 
\begin{Lem}
\label{Lem1}
We have $I_{12}\in M_{12}(U_2({\mathcal O});\Z)$. 
\end{Lem}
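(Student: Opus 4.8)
The plan is to exploit the substitutions recorded just before the lemma, namely $H_4 = 1 + 2^4\cdot 3\,S$ and $E_6 = 1 + 2^3\cdot 3^2\,S + 2^5\cdot 3^3\,U$ with $S, U \in \Z[\![\dot{\boldsymbol{q}}]\!]$, which were made possible by Lemma \ref{Lem0}. Since $H_4^3$, $E_6^2$ and $H_{12}$ all lie in $M_{12}(U_2({\mathcal O}))$, the linear combination $I_{12}$ is automatically a holomorphic Hermitian modular form of weight $12$; the only thing to verify is that its Fourier coefficients are integers. As $H_{12}\in M_{12}(U_2({\mathcal O});\Z)$ by Theorem \ref{Thm:Ki-Na}, the term $2^4\cdot 3^2 H_{12}$ is already integral, so the whole problem reduces to establishing the congruence
\[
H_4^3 - E_6^2 \equiv 0 \bmod{2^6\cdot 3^3}
\]
as an element of $\Z[\![\dot{\boldsymbol{q}}]\!]$, which is exactly what clears the denominator $2^{-6}\cdot 3^{-3}$ in the definition of $I_{12}$.

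First I would expand $H_4^3 = (1 + 2^4\cdot 3\,S)^3$ by the binomial theorem and reduce modulo $2^6\cdot 3^3$. The linear term is $3\cdot 2^4\cdot 3\,S = 2^4\cdot 3^2\,S$, while the quadratic and cubic terms carry factors $3\cdot (2^4\cdot 3)^2 = 2^8\cdot 3^3$ and $(2^4\cdot 3)^3 = 2^{12}\cdot 3^3$ respectively, both divisible by $2^6\cdot 3^3$, hence they vanish. This gives $H_4^3 \equiv 1 + 2^4\cdot 3^2\,S \bmod{2^6\cdot 3^3}$. Next I would expand $E_6^2 = (1 + 2^3\cdot 3^2\,S + 2^5\cdot 3^3\,U)^2$ the same way: the term $2\cdot 2^3\cdot 3^2\,S = 2^4\cdot 3^2\,S$ survives, whereas $2\cdot 2^5\cdot 3^3\,U = 2^6\cdot 3^3\,U$, the square $(2^3\cdot 3^2 S)^2 = 2^6\cdot 3^4\,S^2$, the cross term $2^9\cdot 3^5\,SU$, and $(2^5\cdot 3^3 U)^2 = 2^{10}\cdot 3^6\,U^2$ all carry a factor of at least $2^6\cdot 3^3$ and therefore vanish. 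Thus $E_6^2 \equiv 1 + 2^4\cdot 3^2\,S \bmod{2^6\cdot 3^3}$ as well, and subtracting yields the desired congruence. I would then conclude $2^{-6}\cdot 3^{-3}(H_4^3 - E_6^2) \in \Z[\![\dot{\boldsymbol{q}}]\!]$, and hence $I_{12} \in M_{12}(U_2({\mathcal O});\Z)$.

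The only genuinely delicate point is that the two linear terms $2^4\cdot 3^2\,S$ arising from $H_4^3$ and from $E_6^2$ must cancel \emph{exactly}, with the \emph{same} series $S$; this is precisely the content of Lemma \ref{Lem0}, which guarantees that $H_4$ and $E_6$ share the same fluctuation $S$ modulo $2^2\cdot 3$, so that writing $E_6 = 1 + 2^3\cdot 3^2 S + 2^5\cdot 3^3 U$ with integral $U$ is legitimate. Everything else is a bookkeeping check on the $2$-adic and $3$-adic valuations of the binomial terms, which I expect to be routine; products and sums of integral series stay integral, so no subtraction of $p$-adic valuations (as in Lemma \ref{Lem:ord}) is needed here.
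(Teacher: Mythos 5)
Your proposal is correct and follows essentially the same route as the paper: both use the representations $H_4=1+2^4\cdot 3\,S$, $E_6=1+2^3\cdot 3^2\,S+2^5\cdot 3^3\,U$ furnished by Lemma \ref{Lem0} together with the integrality of $H_{12}$ from Theorem \ref{Thm:Ki-Na}, the only cosmetic difference being that the paper exhibits the explicit integral polynomial $2^{-6}\cdot 3^{-3}(H_4^3-E_6^2)=S^2+64S^3-U-72SU-432U^2$, whereas you verify the equivalent congruence $H_4^3-E_6^2\equiv 0 \bmod{2^6\cdot 3^3}$ by tracking the binomial terms. Your observation that the crucial point is the exact cancellation of the linear terms $2^4\cdot 3^2\,S$, which is exactly what Lemma \ref{Lem0} provides, is precisely the mechanism underlying the paper's expansion.
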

\begin{proof} 
We know by Theorem \ref{Thm:Ki-Na} that $H_{12}\in M_{12}(U_2({\mathcal O});\Z)$. Hence, it suffices to prove that 
$2^{-6}\cdot 3^{-3}(H_4^3 -E_6^2)\in M_{12}(U_2({\mathcal O});\Z)$. 
This can be confirmed by the expansion  
\[2^{-6}\cdot 3^{-3}(H_4^3 -E_6^2)=S^2 + 64 S^3 - U - 72 S U - 432 U^2, \]
where $S$, $U$ is defined as above. 
\end{proof}

\paragraph{Forms of weight $\bold{14}$, $\bold{16}$, $\bold{18}$}
For the proof of their integralities, we use (as in \cite{Ki-Na}) the correspondence between the Maass space and the Kohnen plus subspace which given by Krieg \cite{Kri}. 
We review it briefly.

We define the congruence subgroup of $\Gamma _1=SL_2(\Z)$ with level $N$ ($N\in \mathbb{N}$) as 
\[\Gamma _0^{(1)}(N):=\left\{\begin{pmatrix} a&b\\ c& d\end{pmatrix}\in \Gamma _1 \left| \right. c\equiv 0 \bmod{N} \right\}. \]  
Let $M_k(\Gamma _0^{(1)}(4),\chi _{-4}^k)$ be the space of elliptic modular forms with character $\chi _{-4}^k$ for $\Gamma _0^{(1)}(4)$. 
Let ${\mathcal M}_k(U_2({\mathcal O}))$ be the Maass space consisting of all of $F \in M_k(U_2({\mathcal O}))$ satisfying the Maass relation. 
For the precise definition, see \cite{Kri}, p.676.

The Hermitian modular forms version of the Kohnen plus subspace is defined as
\begin{align*}
&M_k^+(\Gamma _0^{(1)}(4),\chi _{-4}^k)\\
&~~~~~~~:=\left\{f=\sum _{n=0}^{\infty} a_f(n)q^n \in M_k(\Gamma _0^{(1)}(4),\chi _{-4}^k) \left| \right. a_f(n)=0\ \forall n\equiv 1 \bmod{4} \right\}
\end{align*}
Krieg \cite{Kri} gave the isomorphism as vector spaces
\[M_{k-1}^+(\Gamma _0^{(1)}(4)\chi _{-4}^{k-1})\longrightarrow {\mathcal M}_k(U_2({\mathcal O})). \]

Let \[\theta :=1+2\sum _{n\ge 1}q^{n^2},\quad f_2:=\sum _{n\ge 1}\sigma _1(n)q^n \]
with $\sigma _1(n):=\sum _{0<d\mid n}d$ and $q:=e^{2\pi i \tau }$, $\tau \in  \mathbb{H} _1:=\{ \tau =x+iy \; | \; y>0 \}$. 
Then it is known that $\theta^2\in M_1(\Gamma _0^{(1)}(4),\chi _{-4})$ and $f_2\in M_2(\Gamma _0^{(1)}(4),1)$ and they generate the graded ring
\begin{align*}
\bigoplus _{k\in \mathbb{Z}}M_k(\Gamma _0^{(1)}(4),\chi _{-4}^k). 
\end{align*}

Hence we can construct a Hermitian modular form ${\rm Lift}(h) \in M_k(U_2({\mathcal O}))$ from a polynomial $h\in \C[\theta ^2, f_2]$ (such that $h\in M_{k-1}^+(\Gamma _0^{(1)}(4),\chi _{-4}^{k-1})$), 
by the relation between their Fourier coefficients
\[a_{{\rm Lift}(h)}(H)=\sum _{0<d \mid \varepsilon (H)}d^{k-1}\frac{1}{1+|\chi _{-4}(4\det H/d^2 )|}a_h(4\det H/d^2). \]

\begin{Lem}
\label{Lem2}
We have $I_{16} \in M_{16}(U_2({\mathcal O});\mathbb{Z})$ and $K_{k}\in M_{k}(U_2({\mathcal O});\mathbb{Z})$ for $k=14$, $18$. 
\end{Lem}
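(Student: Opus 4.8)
The three forms $I_{16}$, $K_{14}$, $K_{18}$ are by construction $\Z[1/2,1/3]$-polynomials in the generators $H_4,E_6,H_8,F_{10},H_{12}$ of Theorem~\ref{Thm:Ki-Na} (with $K_{18}$ also involving the already-settled $K_{14}$), so each of them automatically lies in $M_k(U_2(\mathcal{O});\Z[1/2,1/3])$. Hence the only point to prove is that no power of $2$ or $3$ survives in the denominators, i.e. that $v_2(\cdot)\ge 0$ and $v_3(\cdot)\ge 0$ for each form. By Lemma~\ref{Lem:ord} the valuation $v_p$ is additive on products, so I can treat the primes $2$ and $3$ separately and reduce every integrality claim to a single divisibility statement for a $\Z$-integral Fourier series.

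The first step is a purely formal reduction. Substituting the expansions $H_4=1+2^4\cdot 3\,S$ and $E_6=1+2^3\cdot 3^2 S+2^5\cdot 3^3 U$ of Lemma~\ref{Lem0} into each defining expression and peeling off the terms that are manifestly $\Z$-integral (those whose coefficient, after the prescribed division, still has nonnegative $2$- and $3$-valuation, using that $S$, $U$ and all generators have integral coefficients), one is left with a short principal part. For example $K_{14}=2^{-1}\cdot 3^{-1}(H_4F_{10}-E_6H_8)$ reduces to $K_{14}=\tfrac{1}{6}(F_{10}-H_8)+8SF_{10}-12SH_8-144UH_8$, so that $K_{14}\in M_{14}(U_2(\mathcal{O});\Z)$ is equivalent to the single congruence $F_{10}\equiv H_8\pmod 6$. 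The analogous substitutions turn the integrality of $I_{16}$ and of $K_{18}$ into finitely many congruences among $H_8$, $F_{10}$, $H_{12}$, $K_{14}$ and their products with $S$, $U$, modulo small powers of $2$ and $3$; I would process the forms in the order $K_{14}$, $I_{16}$, $K_{18}$ so that each proven integrality may be reused.

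To settle the residual congruences I would pass through Krieg's isomorphism $M_{k-1}^+(\Gamma_0^{(1)}(4),\chi_{-4}^{k-1})\xrightarrow{\sim}\mathcal{M}_k(U_2(\mathcal{O}))$. The cuspidal generators were built in \cite{Ki-Na} as Maass lifts of explicit elements $h$ of the plus space, and since $\theta$, $f_2$ have integer coefficients and $\C[\theta^2,f_2]$ carries the whole level-$4$ ring, those preimages are $\Z$-integral; the lift formula then gives every Hermitian coefficient as $a_{\mathrm{Lift}(h)}(H)=\sum_{0<d\mid\varepsilon(H)}d^{k-1}\bigl(1+|\chi_{-4}(4\det(H)/d^2)|\bigr)^{-1}a_h(4\det(H)/d^2)$. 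A congruence modulo a power of $p\in\{2,3\}$ between two such forms thereby becomes an elementary Euler-type congruence between divisor sums and the values $a_h$, of the same flavour as in the proof of Lemma~\ref{Lem0}; as a safeguard, each target congruence involves only finitely many $H$ by the Sturm bound underlying Corollary~\ref{Cor:Na-Ta}, so it can in any case be certified by a finite computation. The Eisenstein contributions, when present, are handled directly through Theorem~\ref{GHE}.

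The delicate point throughout is the factor $\bigl(1+|\chi_{-4}(4\det(H)/d^2)|\bigr)^{-1}$, which equals $\tfrac12$ exactly when $4\det(H)/d^2\equiv 3\pmod 4$ and equals $1$ otherwise; thus an integral preimage $h$ can a priori produce half-integral lift coefficients, and one must check that these halves always aggregate to integers. This is precisely the source of the factors $2^{-1}$, $2^{-2}$ in the definitions and makes the $2$-adic estimate the genuine obstacle, whereas the $3$-adic side is comparatively soft: from $H_4\equiv E_6\equiv 1\pmod 3$ the product terms collapse immediately and the lift introduces no denominator at $3$.
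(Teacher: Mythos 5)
Your toolbox is the right one (Krieg's lift, the plus space, Euler congruences, Corollary \ref{Cor:Na-Ta}), but the way you assemble it has two genuine gaps. First, your ``safeguard'' is circular: a congruence modulo $2$ or $3$ between Hermitian Fourier expansions cannot be certified by finitely many coefficient checks at this point in the paper. Corollary \ref{Cor:Na-Ta} certifies only \emph{exact} vanishing over $\Q$, Theorem \ref{Thm:Na-Ta} covers only $p\ge 5$, and the mod-$2$/mod-$3$ Sturm bound for Hermitian forms is precisely Theorem \ref{Thm2}, which is deduced from the structure theorem resting on this very lemma. Second, your primary mechanism breaks down exactly where it is needed. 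After substituting $H_4=1+2^4\cdot 3S$ and $E_6=1+2^3\cdot 3^2S+2^5\cdot 3^3U$, the residual ``principal parts'' for $I_{16}$ and $K_{18}$ are not congruences between single forms: for instance $I_{16}=2^{-3}\cdot 3^{-2}\bigl(6H_4H_{12}-E_6F_{10}+H_4^2H_8\bigr)$ peels to
\[
I_{16}=\tfrac{1}{72}\bigl(6H_{12}-F_{10}+H_8\bigr)+\tfrac{4}{3}SH_8+\bigl(4SH_{12}-SF_{10}-12UF_{10}+32S^2H_8\bigr),
\]
so the congruence you must prove is $(6H_{12}-F_{10}+H_8)+96\,SH_8\equiv 0 \bmod 72$, which involves the product $SH_8$ (and, since $v_3(SH_8)=0$ by Lemma \ref{Lem:ord}, that cross term cannot be discarded modulo $3$). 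Krieg's formula gives divisor-sum expressions only for coefficients of \emph{lifts}, not for products of Hermitian forms ($S$ is not even modular), so the reduction to ``Euler-type congruences between divisor sums and the values $a_h$'' does not apply, and your only remaining certificate is the circular one. Only your $K_{14}$ reduction, $F_{10}\equiv H_8 \bmod 6$, involves single forms on both sides. Note also that the paper proves the congruence of Lemma \ref{Lem3}(2) \emph{from} the integrality of $I_{16}$, not the other way around.

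The idea your proposal is missing is that the paper never proves congruences mod $2$ or $3$ at all. It constructs \emph{new} explicit plus-space forms $h_{13},h_{15},h_{17}\in\C[\theta^2,f_2]$, proves $h\in M^+$ by the twisting-operator trick combined with the \emph{elliptic} Sturm bound at level $32$ and numerics, shows each $\mathrm{Lift}(h)$ is $\Z$-integral because $h\equiv f_2^j \bmod 2$ (so the $\tfrac12$ factors never create denominators), and then establishes the exact identities $K_{14}=\mathrm{Lift}(h_{13})$, $I_{16}=\mathrm{Lift}(h_{15})-56H_8^2$, $K_{18}=\mathrm{Lift}(h_{17})+256H_8F_{10}$ by comparing finitely many exact rational coefficients and invoking Corollary \ref{Cor:Na-Ta}. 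Exact identities between forms of equal weight are precisely what the available rational Sturm bound can certify; this is what keeps the argument finite and non-circular, and it is the step absent from your plan.
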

\begin{proof}
We set 
\begin{align*}
h_{15}&:=\theta^{14} f_2^4 -28\theta^{10} f_2^5 +192\theta^6f_2^6\\
 &=q^4 + 12 q^6 + 64 q^7 + 36 q^8 - 128 q^{10} - 1152 q^{11} - 
 936 q^{12} - 504 q^{14} \cdots\\
&=\sum _{n\ge 4}a_{h_{15}}(n)q^n.   
\end{align*}
Then we have $h_{15}\in M_{15}(\Gamma _0^{(1)}(4),\chi _{-4})$. 
By an easy numerical experiments, we can confirm that $a_{h_{15}}(n)=0$ for all $n$ with $n\le 500$ and $n\equiv 1$ mod $4$. 
In fact, we can prove $h_{15}\in M^+_{15}(\Gamma _0^{(1)}(4),\chi _{-4})$ as follows.
We consider
\begin{align*}
h_{15}+h_{15}|T_{\chi _{-4}}-h_{15}|U(2)V(2)=\sum _{n\equiv 1 \bmod{4}}a_{h_{15}(n)}q^n \in M_{15}(\Gamma _0^{(1)}(32),\chi _{-4}),
\end{align*}
where $U(l)$, $V(l)$ is the usual operators and $T_{\chi }$ is the twisting operator of the Dirichlet character $\chi $ given in Shimura \cite{Shim}. Namely, their action for 
$f=\sum _{n=0}^\infty a_f(n) q^n$
is described as 
\begin{align*}
&f|U(l)=\sum _{n=0}^\infty a_{f}(ln)q^n \\
&f|V(l)=\sum _{n=0}^\infty a_{f}(n)q^{ln}, \\
&f|T_{\chi }=\sum _{n=0}^\infty \chi (n)a_{f}(n)q^{n}.  
\end{align*} 
We remark that we have (at least) $f|U(l)\in M_k(\Gamma _0^{(1)}(Nl),\psi )$, $f|V(l)\in M_k(\Gamma _0^{(1)}(Nl^2),\psi )$ and $f|T_{\chi }\in M_k(\Gamma _0^{(1)}(Nl^2),\psi \chi ^2)$ when $f\in M_k(\Gamma ^{(1)}_0(N),\psi )$. 
 
Since the Sturm bound for $M_{15}(\Gamma _0^{(1)}(32),\chi _{-4})$ is 
\[\frac{15}{12}[\Gamma _1:\Gamma _0^{(1)}(32)]=\frac{15}{12}\cdot 32\left(1+\frac{1}{2}\right)=60,\]
our numerical experiment for $n\le 500$ is sufficient. 
Namely this shows that $\sum _{n\equiv 1 \bmod{4}}a_{h_{15}(n)}q^n=0$ and hence $h_{15}\in M_{15}^+(\Gamma _0^{(4)},\chi _{-4})$. 
Therefore we can apply the isomorphism constructed by Krieg,
there exists ${\rm Lift}(h_{15})\in M_{16}(U_2({\mathcal O}))$ satisfying that
\begin{align*}
a_{{\rm Lift}(h_{15})}(H)
=\sum _{0<d\mid \varepsilon (H)}\frac{d^{15}}{1+|\chi _{-4}(4\det H/d^2)|}
a_{h_{15}}(4\det H/d^2).
\end{align*} 

By the definition of $h_{15}$, we see that $h_{15}\equiv f_2^4$ mod $2$ because of $\theta \equiv 1$ mod $2$.
This implies immediately
\[\frac{1}{1+|\chi _{-4}(4\det H/d^2)|}a_{h_{15}}(4\det H/d^2)\in \mathbb{Z}\]
for each $d$. 
Namely ${\rm Lift}(h_{15})\in M_{16}(U_2({\mathcal O});\mathbb{Z})$ follows.
 
By a direct calculation, we see that
\[a_{I_{16}}(m,r,s,n)=a_{{\rm Lift}(h_{15})}(m,r,s,n)-56a_{H_8^2}(m,r,s,n)\]
for all $(m,r,s,n)\in \Lambda _{2}({\mathcal O})$ with $m$, $n\le 2=[16/8]$. 
Applying Corollary \ref{Cor:Na-Ta}, we obtain 
\[I_{16}={\rm Lift}(h_{15})-56H_{8}^2. \]
Since ${\rm Lift}(h_{15})-56H_{8}^2\in M_{16}(U_2({\mathcal O});\mathbb{Z})$, 
we have the assertion $I_{16}\in M_{16}(U_2({\mathcal O});\mathbb{Z})$.

Similarly, if we set  
\begin{align*}
&h_{13}:=2\theta^{14}f_2^3 - 60\theta^{10}f_2^4 + 448\theta^6f_2^5\in M^+_{13}(\Gamma _0^{(1)}(4),\chi _{-4})\\
&h_{17}:= \theta^{18} f_2^4 - 36\theta^{14}f_2^5 + 
     368\theta^{10}f_2^6 - 768\theta^6f_2^7\in M^+_{17}(\Gamma _0^{(1)}(4),\chi _{-4}), 
\end{align*}
then we can prove the following equalities 
\begin{align*}
&K_{14}={\rm Lift}(h_{13}), \\
&K_{18}={\rm Lift}(h_{17})+256H_8H_{10}. 
\end{align*}
The assertions for $K_{14}$, $K_{18}$ follow from these fact immediately. 

We will give numerical data we used in the proofs, in Subsection \ref{ProofPlus}
\end{proof}

\begin{Lem}
\label{Lem3}
We have \\
\quad 
(1) $H_{16}\in M_{16}(U_2({\mathcal O});\mathbb{Z})$, \\
\quad 
(2) $6 H_4 H_{12} - E_6 H_{10} + H_4^2 H_8\equiv 0$ mod $2^3\cdot 3^3$.  
\end{Lem}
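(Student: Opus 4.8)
The plan is to deduce both assertions from the integrality of $I_{16}$ (Lemma~\ref{Lem2}), together with the two relations
\[
E_6F_{10}-H_4^2H_8=6H_{16},\qquad H_4H_{12}-H_{16}=12I_{16},
\]
which are immediate from the definitions of $H_{16}$ and $I_{16}$. For (1) I would rearrange the second relation into the exact identity $H_{16}=H_4H_{12}-12\,I_{16}$. Since $H_4\in M_4(U_2({\mathcal O});\Z)$ and $H_{12}\in M_{12}(U_2({\mathcal O});\Z)$ by Theorem~\ref{Thm:Ki-Na}, and $I_{16}\in M_{16}(U_2({\mathcal O});\Z)$ by Lemma~\ref{Lem2}, the right-hand side lies in $M_{16}(U_2({\mathcal O});\Z)$; this gives (1) with no estimate required.

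For (2) the first step is purely algebraic: combining the two relations yields
\[
6H_4H_{12}-E_6F_{10}+H_4^2H_8=6\bigl(H_4H_{12}-H_{16}\bigr)=72\,I_{16}.
\]
Because $72=2^3\cdot 3^2$ while $2^3\cdot 3^3=3\cdot 72$, the asserted congruence modulo $2^3\cdot 3^3$ is \emph{equivalent} to
\[
I_{16}\equiv 0\bmod 3,
\]
i.e. to the divisibility by $3$ of every Fourier coefficient of $I_{16}$. This is the real content of (2).

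To prove $I_{16}\equiv 0\bmod 3$ I would use the explicit expression $I_{16}={\rm Lift}(h_{15})-56H_8^2$ from the proof of Lemma~\ref{Lem2}; since $56\equiv -1\bmod 3$, the target becomes ${\rm Lift}(h_{15})\equiv -H_8^2\bmod 3$. The singular indices are immediate: Krieg's lift formula gives $a_{{\rm Lift}(h_{15})}(H)=0$ whenever ${\rm rank}(H)\le 1$ (as $a_{h_{15}}(0)=0$), while $H_8$ is a cusp form — indeed $E_8-H_4^2\in S_8(U_2({\mathcal O}))$, in parallel with the examples $E_{10}-E_4E_6$ and $E_{12}-\cdots$ recalled earlier — so its singular coefficients vanish and hence so do those of $H_8^2$. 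For ${\rm rank}(H)=2$ I would substitute
\[
a_{{\rm Lift}(h_{15})}(H)=\sum_{0<d\mid \varepsilon(H)}\frac{d^{15}}{1+|\chi_{-4}(4\det H/d^2)|}\,a_{h_{15}}(4\det H/d^2),
\]
reduce via $d^{15}\equiv d\bmod 3$ for $3\nmid d$, and compare term by term with $56\,a_{H_8^2}(H)=56\sum_{H_1+H_2=H}a_{H_8}(H_1)a_{H_8}(H_2)$.

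The main obstacle is exactly this mod-$3$ comparison over all rank-$2$ indices. The Sturm bound of Theorem~\ref{Thm:Na-Ta} is available only for $p\ge 5$, and the case $p=3$ is itself one of the goals of the paper (Theorem~\ref{Thm2}); it therefore cannot be invoked here, so $I_{16}\equiv 0\bmod 3$ cannot be reduced to a finite coefficient check. It must instead be established as an exact statement — either by exhibiting $I_{16}$ as $3$ times an integral Hermitian modular form, or by reducing $h_{15}\bmod 3$ to a tractable polynomial in $\theta^2$ and $f_2$ and matching the resulting lift, family by family, against the mod-$3$ reduction of the convolution square $H_8^2$. Obtaining uniform control of $H_8^2$ modulo $3$ is where the genuine difficulty lies.
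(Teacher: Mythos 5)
Your part (1) is exactly the paper's proof: rewrite the definition of $I_{16}$ as the exact identity $H_{16}=H_4H_{12}-2^2\cdot 3\, I_{16}$ and quote the integrality of $H_4$, $H_{12}$ (Theorem \ref{Thm:Ki-Na}) and of $I_{16}$ (Lemma \ref{Lem2}). Likewise, your algebraic reduction in part (2) --- combining the definitions of $H_{16}$ and $I_{16}$ into the identity $6H_4H_{12}-E_6F_{10}+H_4^2H_8=2^3\cdot 3^2\,I_{16}$ (reading $E_6H_{10}$ as the evident typo for $E_6F_{10}$) --- is precisely the computation in the paper's proof.

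The divergence is in what happens next, and the difficulty you ran into there is not a gap in your technique: the divisibility $I_{16}\equiv 0 \bmod 3$, which you correctly identified as equivalent to the stated congruence mod $2^3\cdot 3^3$, is \emph{false}, so no argument could close that step. Indeed, by your own use of Lemma \ref{Lem2}, $I_{16}={\rm Lift}(h_{15})-56H_8^2$; evaluate at the rank-$2$ index $H=(1,0,0,1)$. Krieg's formula gives $a_{{\rm Lift}(h_{15})}(H)=1^{15}\cdot a_{h_{15}}(4)=1$ (here $\varepsilon(H)=1$ and $\chi_{-4}(4)=0$), while $a_{H_8^2}(H)=0$, since every index in the Fourier support of $H_8$ has $m,n\ge 1$ (see the displayed expansion of $H_8$ in the proof of Theorem \ref{Thm2}) and two such indices cannot sum to $(1,0,0,1)$. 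Hence $a_{I_{16}}(1,0,0,1)=1\not\equiv 0\bmod 3$, and your planned mod-$3$ comparison over rank-$2$ indices fails already at the very first one. The conclusion is that the modulus in the statement is a misprint for $2^3\cdot 3^2$: the paper's own proof concludes only $6H_4H_{12}-E_6F_{10}+H_4^2H_8\equiv 0\bmod 2^3\cdot 3^2$ (exactly your identity plus integrality of $I_{16}$), then uses $H_4\equiv 1\bmod 2^4\cdot 3$ and $E_6\equiv 1\bmod 2^3\cdot 3^2$ to deduce $6H_{12}-F_{10}+H_4^2H_8\equiv 0\bmod 2^3\cdot 3^2$, and only this weaker congruence is used later (the element $V$ is defined by $6H_{12}-F_{10}+H_4^2H_8=2^3\cdot 3^2 V$). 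With the modulus corrected, your proposal is complete and coincides with the paper's argument; the right move at the point where you got stuck was to test a single Fourier coefficient --- with data you already had in hand --- and flag the stated exponent as erroneous, rather than record the mod-$3$ divisibility as a hard open step.
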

\begin{proof}
(1) By the definition of $I_{16}$, we have 
\[2^2\cdot 3 I_{16}=H_4H_{12}-H_{16}. \]
Since $2^2\cdot 3I_{16}\equiv 0$ mod $2^2\cdot 3$ because of $I_{16}\in M_{16}(U_2({\mathcal O});\Z)$, we have $H_{16}\in M_{16}(U_2{(\mathcal O});\Z)$.

(2) By the definition of $H_{16}$, we have 
\[2\cdot 3 H_{16}=E_6F_{10}-H_4^2H_8. \]
Hence we can write as
 \[2^3\cdot 3^2 I_{16}=6H_4 H_{12}-E_6F_{10}+H_4^2H_8. \]
Since $I_{16}\in M_{16}(U_2({\mathcal O});\Z)$, we have $6H_4 H_{12}-E_6F_{10}+H_4^2H_8\equiv 0$ mod $2^3\cdot 3^2$.  
Using the fact that $H_4\equiv 1$ mod $2^4\cdot 3$, $E_6\equiv 1$  mod $2^3\cdot 3^2$, we get 
\[6H_{12}-F_{10}+H_4^2H_8 \equiv 0 \bmod{2^3\cdot 3^2}. \] 
\end{proof}

From (2) in this lemma, we may write as 
\[6H_{12}-F_{10}+H_4^2H_8=2^3\cdot 3^2 V\] 
with $V\in \Z[\![\dot{\boldsymbol{q}}]\!]$.  
This description is another important thing for our arguments. 
\paragraph{Forms of weight $\boldsymbol{k}$ with $\boldsymbol{k \ge 20}$}
First we remark that $I_{24}\in M_{24}(U_2({\mathcal O});\Z)$ is trivial because of $I_{24}=E_6K_{18}$ and $E_6\in M_{6}(U_2({\mathcal O});\Z)$, $K_{18}\in M_{18}(U_2({\mathcal O});\Z)$. Similarly, the integralities of $I_{36}=K_{18}^2$, $J_{36}=E_6K_{30}$, $I_{48}=K_{18}K_{30}$, $H_{60}=K_{30}^2$, $I_{60}=K_{18}K_{42}$, $H_{72}=K_{32}K_{42}$, $H_{84}=K_{42}^2$ follow from that of $E_6$, $K_{18}$, $K_{30}$, $K_{32}$, $K_{42}$. 
\begin{Lem}
We have the integralities of all the generators constructed in Section \ref{Subsec:gen}. 
\end{Lem}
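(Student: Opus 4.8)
The plan is to split the generators into two groups. The first group consists of those that are manifestly products of forms already shown to be integral, namely $J_{12}=E_6^2$, $I_{24}=E_6K_{18}$, $I_{36}=K_{18}^2$, $J_{36}=E_6K_{30}$, $I_{48}=K_{18}K_{30}$, $H_{60}=K_{30}^2$, $I_{60}=K_{18}K_{42}$, $H_{72}=K_{30}K_{42}$ and $H_{84}=K_{42}^2$. For these the integrality is immediate once $E_6$, $K_{18}$, $K_{30}$ and $K_{42}$ are known to have integral Fourier coefficients, since the product of two formal series with integral coefficients has integral coefficients; nothing beyond Lemma \ref{Lem:ord} (or a direct multiplication) is needed. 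This reduces the task to the forms carrying explicit denominators $2^{-a}3^{-b}$, namely $H_{20}$, $K_{22}$, $H_{24}$, $K_{26}$, $K_{30}$, $L_{30}$, $K_{34}$, $I_{28}$, $H_{28}$, $H_{36}$, $H_{40}$, $I_{40}$, $K_{42}$, $H_{48}$ and $H_{52}$.

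For each of these the assertion is purely a divisibility statement: writing the form as $2^{-a}3^{-b}N$, with $N$ a polynomial in the integral generators $H_4,E_6,H_8,F_{10},H_{12}$ and in the lower-weight forms already treated, it suffices to prove $v_2(N)\ge a$ and $v_3(N)\ge b$. I would establish these by reducing modulo the relevant powers of $2$ and $3$, using the normalizations $H_4=1+2^4\cdot3\,S$ and $E_6=1+2^3\cdot3^2\,S+2^5\cdot3^3\,U$ supplied by Lemma \ref{Lem0}, together with the congruence $6H_{12}-F_{10}+H_4^2H_8=2^3\cdot3^2\,V$ coming from Lemma \ref{Lem3}(2). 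Substituting these expressions collapses each numerator $N$ into a polynomial in $S,U,V,H_8,H_{12}$ whose coefficients visibly carry the required powers of $2$ and $3$, exactly as in the identity $2^{-6}\cdot3^{-3}(H_4^3-E_6^2)=S^2+64S^3-U-72SU-432U^2$ used for $I_{12}$ in Lemma \ref{Lem1}. Multiplicativity of $v_p$ (Lemma \ref{Lem:ord}) lets me reduce products to their factors and proceed by induction on the weight, ordering the same-weight forms by the obvious dependency (e.g. $I_{28}$ before $H_{28}$, and $H_{28}$, $K_{30}$ before $K_{42}$), so that every form entering a given definition is already available as an integral input.

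A second mechanism is needed for those generators whose denominators do not clear by a bare substitution, and here I would reproduce the method used for $I_{16}$, $K_{14}$ and $K_{18}$ in Lemma \ref{Lem2}: exhibit an explicit element of the Kohnen plus space as a polynomial in $\theta^2$ and $f_2$, form its Maass lift, and identify the lift with the generator up to an integral correction. Concretely, one checks that the two sides agree in all Fourier coefficients $a(m,r,s,n)$ with $m,n\le[k/8]$ and invokes the Sturm bound (Corollary \ref{Cor:Na-Ta}) to promote this finite coincidence to an equality of modular forms; the integrality of the lift then follows from the reduction $\theta\equiv1$ mod $2$, which forces the plus-space input to be congruent to a power of $f_2$ and thereby removes the factor $1+|\chi_{-4}(\cdot)|$ from the denominator of the lifting formula. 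The restriction map to $\mathbb{S}_2$ furnishes the independent check that the correction terms are chosen correctly, since $H_k|_{\mathbb{S}_2}=S_k$, $I_k|_{\mathbb{S}_2}=T_k$ and $J_k|_{\mathbb{S}_2}=U_k$ pin down the part of each form transverse to the ideal generated by $H_8$.

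The main obstacle I anticipate lies with the heaviest forms $H_{48}$ and $H_{52}$, whose defining polynomials involve long sums of products at the prime $2$; there the congruences must be tracked modulo $2^3$ (respectively $2^2\cdot3$) through several interacting terms, and the generator $H_8$, which vanishes identically on $\mathbb{S}_2$, prevents the Siegel restriction from controlling the $2$-adic valuation on its own. For these the substitution $N\mapsto N(S,U,V,H_8,H_{12})$ must be carried out in full and the cancellation of the denominator verified term by term, which is the computational heart of the argument rather than a conceptual one.
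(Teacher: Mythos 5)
Your proposal is correct and is essentially the paper's own argument: the paper dismisses the product-type generators by the trivial remark, and proves integrality of every remaining generator precisely by substituting $H_4=1+2^4\cdot 3\,S$, $E_6=1+2^3\cdot 3^2\,S+2^5\cdot 3^3\,U$ and $F_{10}=6H_{12}+H_4^2H_8-2^3\cdot 3^2\,V$ so as to exhibit each one as an element of $\Z[H_{12},H_8,S,U,V]$, with the explicit polynomials recorded in Subsection \ref{List}. The only point where you diverge is your anticipated ``second mechanism'' via Maass lifts in the style of Lemma \ref{Lem2}: this is never needed for the generators covered by this lemma (it was already used, and used up, for $I_{16}$, $K_{14}$, $K_{18}$), since every generator of weight at least $20$ clears its denominator by the bare substitution.
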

\begin{proof}
By the definition of $H_{20}$, we can write as
\[H_{20}=2^{-2}\cdot 3^{-2}(F_{10}^2 - 12 H_{12} H_8 - H_4 H_8^2). \]
If we use the descriptions 
\begin{align*}
&F_{10}=6H_{12}+H_{4}^2H_8-2^3\cdot 3^2V,\\
&H_4=1+2^4\cdot 3 S, \\ 
&E_6=1+2^3\cdot 3^2 S+2^5\cdot 3^3 U, 
\end{align*}
then we have 
\begin{align*}
H_{20}&= H_{12}^2 + 32 H_{12} H_8 S + 4 H_8^2 S + 768 H_{12} H_8 S^2 + 384 H_8^2 S^2 \\
&+ 12288 H_8^2 S^3 + 147456 H_8^2 S^4 + 24 H_{12} V + 4 H_8 V + 
   384 H_8 S V \\
&+ 9216 H_8 S^2 V + 144 V^2. 
\end{align*}
This shows $H_{20}\in M_{20}(U_2({\mathcal O});\mathbb{Z})$. 

Similarly, we can prove the integralities of all the generators.
In fact we can confirm that, all the generators have descriptions as polynomials of $H_{12}$, $H_8$, $S$, $U$, $V\in \Z[\![\dot{\boldsymbol{q}}]\!]$ with integral coefficients (see Subsection \ref{List}).  
\end{proof}

Now we could prove the integralities of our generators: 
\begin{Thm}
All of the modular forms
\begin{align*}
&H_4,\ H_8,\ H_{12},\ I_{12},\ J_{12},\ H_{16},\ I_{16},\ H_{20},\ H_{24},\ I_{24},\ H_{28},\ I_{28},\\
&H_{36},\ I_{36},\ J_{36},\ H_{40},\ I_{40},\ H_{48},\ I_{48},\ H_{52},\ H_{60},\ I_{60},\ H_{72},\ H_{84}
\end{align*}
and also 
\begin{align*}
&K_{14},\ K_{18},\ K_{22},\ K_{26},\ K_{30},\ L_{30},\ K_{34},\ K_{42},\ K_{38}
\end{align*}
are elements of $\Z[\![\dot{\boldsymbol{q}}]\!]$. 
\end{Thm}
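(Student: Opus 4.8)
The plan is to reduce the integrality of every generator to that of a short list of building blocks, and then to check case by case that the denominators prescribed in the definitions cancel. By Theorem \ref{Thm:Ki-Na} we already have $H_4,E_6,H_8,F_{10},H_{12}\in\Z[\![\dot{\boldsymbol{q}}]\!]$, and $I_{12}\in\Z[\![\dot{\boldsymbol{q}}]\!]$ by Lemma \ref{Lem1}. Using Lemma \ref{Lem0} I would fix $S,U\in\Z[\![\dot{\boldsymbol{q}}]\!]$ with
\[
H_4=1+2^4\cdot 3\,S,\qquad E_6=1+2^3\cdot 3^2 S+2^5\cdot 3^3 U,
\]
and using Lemma \ref{Lem3}(2) I would fix $V\in\Z[\![\dot{\boldsymbol{q}}]\!]$ with
\[
6H_{12}-F_{10}+H_4^2H_8=2^3\cdot 3^2 V.
\]
These integral series $H_8,H_{12},I_{12},S,U,V$, together with the lift forms introduced below, are the blocks into which every generator will be rewritten.

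I would first settle the forms whose integrality is not visible from a denominator count, namely the \emph{lift forms} $K_{14},K_{18}$ and $I_{16}$, exactly following Lemma \ref{Lem2}. For each I would exhibit a plus-form $h\in\C[\theta^2,f_2]$, confirm via the Sturm bound that it lies in the Kohnen plus space $M^+_{k-1}(\Gamma_0^{(1)}(4),\chi_{-4}^{k-1})$, apply Krieg's isomorphism to produce a Hermitian Maass lift, and observe that $h\equiv f_2^{\,j}\pmod 2$ forces the lifted Fourier coefficients to be integral. Matching finitely many coefficients and invoking Corollary \ref{Cor:Na-Ta} then upgrades the numerical match to the identities of Lemma \ref{Lem2}, yielding the integrality of $K_{14},K_{18},I_{16}$. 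Once $E_6,K_{18},K_{30},K_{42}$ are known to be integral, the manifestly multiplicative generators $J_{12}=E_6^2$, $I_{24}=E_6K_{18}$, $I_{36}=K_{18}^2$, $J_{36}=E_6K_{30}$, $I_{48}=K_{18}K_{30}$, $H_{60}=K_{30}^2$, $I_{60}=K_{18}K_{42}$, $H_{72}=K_{30}K_{42}$ and $H_{84}=K_{42}^2$ are immediate.

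For the remaining $H$-, $I$-, $K$- and $L$-forms I would proceed exactly as in the model computation for $H_{20}$ carried out above: substitute the descriptions of $H_4$, $E_6$ and $F_{10}$ into each defining expression and simplify, checking that the outcome is a polynomial in $H_8,H_{12},I_{12},S,U,V$ (and in the already-integral lift forms, where they occur) with coefficients in $\Z$; the explicit polynomials are collected in Subsection \ref{List}. Since the higher-weight generators are defined through the lower-weight ones, I would process them in order of increasing weight, so that every form appearing on a right-hand side is already known to be integral before it is used.

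The main obstacle lies entirely in this last step: the bookkeeping needed to see that the denominators $2^a3^b$ genuinely cancel. The delicate cases are $K_{30}$, $K_{42}$ and the heavy forms $H_{40},H_{48},H_{52}$, whose definitions carry correction terms such as $3^{-1}H_8F_{10}I_{12}$ for which formal cancellation is not automatic. There one must exploit the congruences $H_4\equiv 1\pmod{2^4\cdot 3}$, $E_6\equiv 1\pmod{2^3\cdot 3^2}$ and $6H_{12}-F_{10}+H_4^2H_8\equiv 0\pmod{2^3\cdot 3^2}$ to verify that the numerators vanish to the required $2$-adic and $3$-adic order. With these congruences in hand the verification is routine but lengthy, which is why it is relegated to the explicit list rather than carried out inline.
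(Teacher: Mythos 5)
Your proposal is correct and follows essentially the same route as the paper: introduce $S$, $U$ via Lemma \ref{Lem0}, $V$ via Lemma \ref{Lem3}(2), handle $I_{16}$, $K_{14}$, $K_{18}$ through Krieg's Maass-lift/Kohnen-plus-space argument of Lemma \ref{Lem2}, dispose of the purely multiplicative generators as products, and reduce everything else to the explicit integral polynomial expressions in $H_{12}$, $H_8$, $S$, $U$, $V$ listed in Subsection \ref{List}. The only cosmetic difference is that you admit $I_{12}$ as an extra building block, which is harmless since $I_{12}$ itself lies in $\Z[H_{12},S,U]$.
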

\subsection{Proof of the structure theorem}
We are now in a position to
 prove the following main result. 
\begin{Thm}
\label{Thm1}
The graded ring $A^{(4)}(U_2({\mathcal O});\mathbb{Z})$ over $\mathbb{Z}$ is generated by $24$ modular forms \begin{align*}
&H_4,\ H_8,\ H_{12},\ I_{12},\ J_{12},\ H_{16},\ I_{16},\ H_{20},\ H_{24},\ I_{24},\ H_{28},\ I_{28},\\
&H_{36},\ I_{36},\ J_{36},\ H_{40},\ I_{40},\ H_{48},\ I_{48},\ H_{52},\ H_{60},\ I_{60},\ H_{72},\ H_{84}. 
\end{align*}
In other words, for any $F\in M_k(U_2({\mathcal O});\mathbb{Z})$ with $4\mid k$, there exists a polynomial with $24$ variables having coefficients in $\mathbb{Z}$ such that
$F=P(H_4,H_8,H_{12},\cdots , H_{84})$.  
\end{Thm}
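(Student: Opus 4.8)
The plan is to argue by induction on the weight $k\in 4\mathbb{Z}$, using the restriction map $\rho\colon F\mapsto F|_{\mathbb{S}_2}$ to import Igusa's integral structure from the Siegel side and then controlling $\ker\rho$ by division by $H_8$. Write $B$ for the subring of $A^{(4)}(U_2(\mathcal{O});\mathbb{Z})$ generated by the $24$ listed forms; the inclusion $B\subseteq A^{(4)}(U_2(\mathcal{O});\mathbb{Z})$ is precisely the integrality established in the previous subsection, so the theorem amounts to the reverse inclusion.

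First I would treat the ``Siegel part''. For $F\in M_k(U_2(\mathcal{O});\mathbb{Z})$ with $4\mid k$, the restriction $\rho(F)=F|_{\mathbb{S}_2}$ lies in $M_k(\Gamma_2;\mathbb{Z})$, so by Corollary \ref{Cor:S_gen} it is an integral polynomial in the $23$ Siegel generators $S_{k_1},T_{k_2},U_{k_3}$. Replacing each of these by the corresponding Hermitian form $H_{k_1},I_{k_2},J_{k_3}$ and using the Proposition above (which asserts $\rho(H_{k_1})=S_{k_1}$, $\rho(I_{k_2})=T_{k_2}$, $\rho(J_{k_3})=U_{k_3}$), I obtain an element $P\in B$ with $\rho(P)=\rho(F)$. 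Hence $F_1:=F-P\in M_k(U_2(\mathcal{O});\mathbb{Z})$ lies in $\ker\rho$.

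The next step is to determine $\ker\rho$. By Theorem \ref{Thm:Ki-Na} the ring $A^{(2)}(U_2(\mathcal{O});\mathbb{C})=\mathbb{C}[H_4,E_6,H_8,F_{10},H_{12}]$ is a polynomial ring, and $\rho$ sends $H_4,E_6,F_{10},H_{12}$ to nonzero scalar multiples of the algebraically independent Siegel forms $X_4,X_6,X_{10},X_{12}$, while $\rho(H_8)=0$. Consequently $\ker\rho$ is exactly the principal ideal $(H_8)$; intersecting with the weights divisible by $4$ gives $\ker\rho\cap A^{(4)}(U_2(\mathcal{O});\mathbb{C})=H_8\cdot A^{(4)}(U_2(\mathcal{O});\mathbb{C})$. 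In particular $F_1=H_8\,G$ for a unique Hermitian modular form $G$ of weight $k-8\in 4\mathbb{Z}$.

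The main obstacle is to show that $G$ again has integral Fourier coefficients; granting this, the inductive hypothesis applied to $G$ (whose weight $k-8<k$) gives $G\in B$, and then $F=P+H_8G\in B$ closes the induction, the base cases $k\le 8$ being immediate since the integral kernel of $\rho$ is $0$ in weight $4$ and $\mathbb{Z}H_8$ in weight $8$. For the integrality of $G$ I would invoke Lemma \ref{Lem:ord}: for every prime $p$ one has $v_p(F_1)=v_p(H_8)+v_p(G)$, so $v_p(G)\ge 0$ as soon as $v_p(H_8)=0$. Thus everything reduces to the auxiliary fact that $H_8$ is $p$-primitive, i.e.\ $v_p(H_8)=0$ for all $p$, which I would verify directly from the Fourier expansion of $H_8$ by exhibiting a coefficient prime to $p$ (for $p\ge 5$ this already follows from the $\mathbb{Z}[1/2,1/3]$-structure of Theorem \ref{Thm:Ki-Na}, so only $p=2,3$ require a short computation). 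With this in hand the division by $H_8$ preserves integrality and the induction goes through.
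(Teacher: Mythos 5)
Your proposal is correct and follows essentially the same route as the paper's own proof: induction on the weight, lifting the Igusa--integral expression of $F|_{\mathbb{S}_2}$ via Corollary \ref{Cor:S_gen} and the Proposition, dividing the resulting element of $\ker\rho$ by $H_8$, and recovering integrality of the quotient from $v_p(H_8)=0$ together with Lemma \ref{Lem:ord}. The only additions are details the paper leaves implicit or cites (identifying $\ker\rho=(H_8)$ via the algebraic independence in Theorem \ref{Thm:Ki-Na} rather than citing Dern--Krieg, and checking $v_p(H_8)=0$ from the Fourier expansion), so the two arguments are the same in substance.
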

\begin{proof}
We prove it by an induction on the weight. 

For $k=4$, the statement is true clearly. 
Suppose that the statement is true for all $k$ with $k<k_0$. 
Let $F\in M_{k_0}(U_2({\mathcal O});\mathbb{Z})$. 
Then there exists a polynomial $P$ with $23$ variables having coefficients in $\mathbb{Z}$ such that $F|_{\mathbb{S}_2}=P(S_4,S_{12},T_{12},\cdots, S_{84})$ because of Corollary \ref{Cor:S_gen}.
Then we have $F-P(H_4,H_{12},I_{12},\cdots ,H_{84})\in M_{k_0}(U_2({\mathcal O};\mathbb{Z}))$ and $(F-P(H_4,H_{12},I_{12},\cdots , H_{84}))|_{\mathbb{S}_2}=0$. 
Therefore there exists $F'\in M_{k_0-8}(U_2({\mathcal O});\mathbb{Q})$ such that $F-P(H_4,H_{12},I_{12},\cdots , H_{84})=H_8F'$.
Since all Fourier coefficients of $P(H_4,H_{12},I_{12},\cdots , H_{84})$ are in $\mathbb{Z}$, we have $H_8F'\in M_{k}(U_2({\mathcal O});\mathbb{Z})$. 
By $v_p(H_8)=0$ for any prime $p$, we have $F'\in M_{k_0-8}(U_2({\mathcal O});\mathbb{Z})$ because of Lemma \ref{Lem:ord}.
By the induction hypothesis, there exists a polynomial $P'$ such that $F'=P'(H_4,H_8,H_{12},\cdots , H_{84})$. 
Therefore we have 
\[F=P(H_4,H_{12},I_{12},\cdots , H_{84})+H_8P'(H_4,H_8,H_{12}\cdots , H_{84}). \]
This completes the proof of Theorem \ref{Thm1}.   
 \end{proof}

\begin{Rem}
To determine the structure of $A^{(2)}(U_2({\mathcal O});\Z)$ by our method, we need $K_{46}\in M_{46}(U_2({\mathcal O});\Z)$ such that 
$K_{46}|_{\mathbb{S}_2}=X_{10}X_{36}$. 
However, we predict that there does not exist such $K_{46}$, due to the leading terms of Fourier expansions. 
This is a main reason why we restricted our selves to the case where the weights are multiples of $4$.
We remark also that we can construct $K_{46}'\in M_{46}(U_2({\mathcal O});\Z)$ such that $K_{46}'|_{\mathbb{S}_2}=3X_{10} X_{36}$. 
\end{Rem}

\subsection{An Application}
As an application,  we have the following Sturm bounds for any $k$ with $4\mid k$. 
\begin{Thm}
\label{Thm2}
Let $p$ be an any prime and $k$ an integer with $4\mid k$. 
Suppose that $F\in M_{k}(U_2({\mathcal O});\mathbb{Z})$ satisfies that $a_F(m,r,s,n)\equiv 0$ mod $p$
for all $m$, $n\in \mathbb{Z}$ with  
\[0\le m,\ n\le \left[\frac{k}{8}\right] \]
Then we have $F\equiv 0$ mod $p$. 
\end{Thm}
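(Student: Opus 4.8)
The plan is to induct on the weight $k$ in steps of $8$, transferring the Hermitian vanishing to the Siegel side through the restriction map $F\mapsto F|_{\mathbb{S}_2}$ and then removing one factor of the cusp form $H_8$ by means of the structure theorem. Since the case $p\ge5$ is already covered by Theorem \ref{Thm:Na-Ta}, I only need to treat $p=2,3$, though the argument is uniform in $p$. For the smallest weights the claim is immediate: when $k=4$ one has $M_4(U_2({\mathcal O}))=\C\,H_4$ and $[4/8]=0$, so the hypothesis only constrains the constant term and forces $F=a_F(0)H_4\equiv0\bmod p$; when $k=8$ the space is spanned by $H_4^2$ and $H_8$, and the vanishing of $a_F(0)$ together with that of the diagonal coefficients $a_F(1,r,s,1)$ (here $[8/8]=1$) forces $F\equiv0\bmod p$, using that some $a_{H_8}(1,r,s,1)$ is a $p$-unit.

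For the inductive step take $F\in M_k(U_2({\mathcal O});\Z)$ satisfying the hypothesis, with $k\ge12$. First I would restrict to $\mathbb{S}_2$. Each Fourier coefficient of $F|_{\mathbb{S}_2}$ is a sum $\sum_r a_F(m,r,s,n)$ over $r$ at the same diagonal $(m,n)$, so the hypothesis gives $a_{F|_{\mathbb{S}_2}}\equiv0\bmod p$ for $m,n\le[k/8]$; since $[k/8]\ge[k/10]$, Theorem \ref{Stbd0} yields $F|_{\mathbb{S}_2}\equiv0\bmod p$. Thus $F|_{\mathbb{S}_2}=pG$ with $G\in M_k(\Gamma_2;\Z)$; writing $G=Q(S_4,\dots,S_{84})$ over $\Z$ by Corollary \ref{Cor:S_gen} and lifting to $\widetilde{Q}:=Q(H_4,H_{12},I_{12},\dots,H_{84})\in M_k(U_2({\mathcal O});\Z)$, which restricts to $G$, I find that $F-p\widetilde{Q}$ restricts to $0$. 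By the division step in the proof of Theorem \ref{Thm1} there is $F'\in M_{k-8}(U_2({\mathcal O});\Q)$ with $F-p\widetilde{Q}=H_8F'$, and since $v_q(H_8)=0$ for every prime $q$, Lemma \ref{Lem:ord} forces $F'\in M_{k-8}(U_2({\mathcal O});\Z)$. Hence $F\equiv H_8F'\bmod p$.

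The decisive step is to transfer the vanishing to $F'$, namely to show $a_{F'}(m,r,s,n)\equiv0\bmod p$ for all $m,n\le[k/8]-1=[(k-8)/8]$; the induction hypothesis applied to $F'$ then gives $F'\equiv0\bmod p$, whence $F\equiv0\bmod p$. Here I fix the additive term order $\prec$ on $\Lambda_2({\mathcal O})$ from \cite{Ki-Na2} underlying Lemma \ref{Lem:ord}, so that leading terms multiply, and I use that $H_8$ is a cusp form, whence every $H_1$ in its support is positive definite ($m_1,n_1\ge1$), and that the $\prec$-leading term of $H_8\bmod p$ sits at a form $H_0$ of diagonal $(1,1)$ with $p$-unit coefficient. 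If $a_{F'}\not\equiv0\bmod p$ for some form with $m,n\le[k/8]-1$, choose the $\prec$-minimal such $H_2^{\ast}$; in
\[
a_F(H_0+H_2^{\ast})\equiv\sum_{H_1+H_2'=H_0+H_2^{\ast}}a_{H_8}(H_1)\,a_{F'}(H_2')\pmod p
\]
every term with $H_1\ne H_0$ has $H_2'\prec H_2^{\ast}$ (by minimality of $H_0$) and diagonal $\le(m_2^{\ast},n_2^{\ast})$ (by cuspidality), hence lies in the range and vanishes by minimality of $H_2^{\ast}$; the surviving term $a_{H_8}(H_0)\,a_{F'}(H_2^{\ast})$ is nonzero modulo $p$. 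As $H_0+H_2^{\ast}$ has diagonal $\le[k/8]$, this contradicts the hypothesis on $F$.

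The step I expect to be the main obstacle is the concrete input about $H_8$ in the last paragraph: that $H_8$ is a cusp form and, crucially for $p=2,3$, that some low coefficient $a_{H_8}(1,r,s,1)$ is a unit modulo $p$. This is precisely what anchors the leading term of $H_8\bmod p$ at diagonal $(1,1)$ and thereby produces the sharp bound $[k/8]$; note that $v_q(H_8)=0$ alone does not suffice, since the $\prec$-leading coefficient could a priori be divisible by $p$, so one genuinely needs the explicit Fourier coefficients of $H_8$. The remaining order-theoretic bookkeeping---that $\prec$ is additive so that leading terms multiply and that the diagonal-$(1,1)$ forms are $\prec$-minimal among positive-definite forms---is routine once the order of \cite{Ki-Na2} is adopted.
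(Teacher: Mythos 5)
Your proposal is correct and follows essentially the same route as the paper: base cases $k=4,8$ from the explicit $\mathbb{Z}$-module structure, then for $k\ge 12$ the Siegel Sturm bound (Theorem \ref{Stbd0}) applied to $F|_{\mathbb{S}_2}$, lifting the integral polynomial in the $S_k$'s to the Hermitian generators, dividing by $H_8$ with integrality of the quotient via $v_p(H_8)=0$ and Lemma \ref{Lem:ord} (this is exactly the paper's Lemma \ref{LemA1}), and finally passing the congruence hypothesis from $F$ to $F'$ with the range dropping from $[k/8]$ to $[k/8]-1$. The only difference is one of detail, not of method: you spell out the leading-term/unit-coefficient argument for that last transfer step, which the paper justifies by displaying the initial Fourier expansion of $H_8$ and citing the same reasoning as \cite{Ki-Ta}, Lemma 5.1.
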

For the primes $p\ge 5$, we can prove the statement in the similar way. 
Hence we prove the essential case $p=2$, $3$ only.    

\begin{Lem}
\label{LemA1}
Let $p=2$, $3$ and $k$ be an even integer with $4\mid k$. 
Suppose that $F\in M_k(U_2({\mathcal O});\mathbb{Z})$ satisfies $F|_{\mathbb{S}_2}\equiv 0$ mod $p$, then there exists $F'\in M_{k-8}(U_2({\mathcal O});\mathbb{Z})$ such that $F\equiv H_8 F'$ mod $p$. 
\end{Lem}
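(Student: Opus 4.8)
The plan is to run the restriction-and-descent argument from the proof of Theorem~\ref{Thm1}, but keeping a factor of $p$ in view throughout. The starting point is that $F|_{\mathbb{S}_2}\in M_k(\Gamma_2;\Z)$ (the restriction map sends integral Hermitian forms to integral Siegel forms) and that, by hypothesis, every Fourier coefficient of $F|_{\mathbb{S}_2}$ is divisible by $p$. Dividing each coefficient by $p$ yields a holomorphic function which still obeys the modular transformation law (the law being linear) and has integral Fourier coefficients; hence $G:=\tfrac1p\,F|_{\mathbb{S}_2}\in M_k(\Gamma_2;\Z)$. Since $4\mid k$, I may apply Corollary~\ref{Cor:S_gen} to write $G=Q(S_4,S_{12},T_{12},\dots,S_{84})$ for a polynomial $Q$ with coefficients in $\Z$, so that $F|_{\mathbb{S}_2}=p\,Q(S_4,\dots,S_{84})$.

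Next I would transport $Q$ to the Hermitian side. By the Proposition in Subsection~\ref{Const} each Igusa generator is the Siegel restriction of the matching Hermitian generator, namely $H_{k_1}|_{\mathbb{S}_2}=S_{k_1}$, $I_{k_2}|_{\mathbb{S}_2}=T_{k_2}$ and $J_{k_3}|_{\mathbb{S}_2}=U_{k_3}$. Thus the form $\Psi:=p\,Q(H_4,H_{12},I_{12},\dots,H_{84})$ lies in $M_k(U_2(\mathcal{O});\Z)$ — its coefficients are integral because the generators are integral (Subsection~\ref{Int}) — and satisfies $\Psi|_{\mathbb{S}_2}=p\,Q(S_4,\dots,S_{84})=F|_{\mathbb{S}_2}$. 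Consequently $\widetilde F:=F-\Psi$ lies in $M_k(U_2(\mathcal{O});\Z)$ and restricts to exactly $0$ on $\mathbb{S}_2$. At this stage the kernel-of-restriction fact used in the proof of Theorem~\ref{Thm1} applies verbatim: there is $F'\in M_{k-8}(U_2(\mathcal{O});\mathbb{Q})$ with $\widetilde F=H_8F'$, and since $\widetilde F$ is integral and $v_p(H_8)=0$ for every prime, Lemma~\ref{Lem:ord} gives $v_p(F')=v_p(\widetilde F)\ge 0$ for all $p$ (the case $\widetilde F=0$ forcing $F'=0$ trivially), so that $F'\in M_{k-8}(U_2(\mathcal{O});\Z)$. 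Then $F=H_8F'+\Psi\equiv H_8F'\bmod p$, which is the assertion.

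The one point that needs care — and the only genuine obstacle — is the order of operations: one must peel off the factor $p$ \emph{before} expressing the restriction through Igusa's generators. The forms $S_\bullet,T_\bullet,U_\bullet$ are very far from algebraically independent, and the relations among them mean that a polynomial representing $F|_{\mathbb{S}_2}$ need not have coefficients divisible by $p$ even though $F|_{\mathbb{S}_2}\equiv 0\bmod p$; so one cannot simply reduce a representing polynomial modulo $p$. Dividing the Fourier coefficients first, and only then invoking Corollary~\ref{Cor:S_gen}, circumvents this and guarantees that $\widetilde F$ restricts to $0$ \emph{identically} rather than merely modulo $p$, which is precisely what is required in order to factor out $H_8$ over $\Z$. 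I note finally that the argument nowhere uses $p\in\{2,3\}$ and is uniform in $p$; the restriction to $p=2,3$ in the statement only reflects the range in which the lemma is applied, the bounds for $p\ge 5$ being already available through Theorem~\ref{Thm:Na-Ta}.
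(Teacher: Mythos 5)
Your proposal is correct and is essentially the paper's own proof: both divide $F|_{\mathbb{S}_2}$ by $p$ \emph{before} invoking Corollary \ref{Cor:S_gen}, pull the resulting integral polynomial back through the Hermitian generators, factor the difference (which restricts to zero identically) as $H_8F'$ via the Dern--Krieg kernel result, and deduce $F'\in M_{k-8}(U_2({\mathcal O});\mathbb{Z})$ from $v_p(H_8)=0$ together with Lemma \ref{Lem:ord}. The only cosmetic difference is that the paper first disposes of the degenerate weights $k=4,8$ separately, using $M_4(U_2({\mathcal O});\mathbb{Z})=H_4\mathbb{Z}$ and $M_8(U_2({\mathcal O});\mathbb{Z})=H_4^2\mathbb{Z}\oplus H_8\mathbb{Z}$, whereas you treat all weights uniformly (which is legitimate, since the restriction kernel is trivial below weight $8$); your closing remark that the argument never uses $p\in\{2,3\}$ also matches the paper's observation.
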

\begin{proof}
For $k=4$, $8$, we have as free $\mathbb{Z}$-modules
\begin{align*}
&M_{4}(U_2({\mathcal O});\mathbb{Z})=H_4\mathbb{Z},\\
&M_{8}(U_2({\mathcal O});\mathbb{Z})=H_4^2\mathbb{Z}\oplus H_8\mathbb{Z}. 
\end{align*}
If $k\neq 8$ and $F\not \equiv 0$ mod $p$, then $F|_{\mathbb{S}_2}\equiv 0$ mod $p$ is impossible. 
If $k=8$, then $F|_{\mathbb{S}_2}\equiv 0$ mod $p$ is possible only if $F\equiv cH_8$ mod $p$ for some $c\in \mathbb{Z}$. 
Therefore the statements for $k=4$, $8$ are true.

We prove the case $k\ge 12$ with $4\mid k$. 
Since $F|_{\mathbb{S}_2}\equiv 0$ mod $p$, 
we have $\frac{1}{p} F|_{\mathbb{S}_2}\in M_k(\Gamma_2;\mathbb{Z})$.
By Corollary \ref{Cor:S_gen}, there exists an isobaric polynomial $P$ with coefficients in $\mathbb{Z}$ such that $\frac{1}{p}F|_{\mathbb{S}_2}=P(S_4,S_{12},\cdots ,S_{84})$. 
If we put 
\[G:=P(H_4,H_{12},\cdots ,H_{84}),\] 
then we have $G\in M_{k}(U_2({\mathcal O});\mathbb{Z})$ and $(F-pG)|_{\mathbb{S}_2}=0$. 
By the result of Dern-Krieg \cite{D-K}, there exists $F'\in M_{k-8}(U_2({\mathcal O});\mathbb{Q})$ such that $F-pG=H_8F'$.
Since $v_p(F-pG)\ge 0$ and $v_p(H_8)=0$ for any $p\ge 2$, 
it should be that $F'\in M_{k-8}(U_2({\mathcal O});\mathbb{Z})$. 
Then we have $F\equiv H_8F'$ mod $p$.

This competes the proof of Lemma \ref{LemA1}. 
\end{proof}

We prove Theorem \ref{Thm2}. 
\begin{proof}[Proof of Theorem \ref{Thm2}]
For $k=4$, $8$, we have as free $\mathbb{Z}$-modules
\begin{align*}
&M_{4}(U_2({\mathcal O});\mathbb{Z})=H_4\mathbb{Z},\\
&M_{8}(U_2({\mathcal O});\mathbb{Z})=H_4^2\mathbb{Z}\oplus H_8\mathbb{Z}.
\end{align*}
Since $H_4\equiv 1$ mod $p$ and 
$H_8\not \equiv c$ mod $p$ for any $c\in \Z$, the statements for $k=4$, $8$ are trivial.

Let $k\ge 12$.
From $[k/8]\ge [k/10]$, we can apply the Sturm bound in Theorem \ref{Stbd0} to $F|_{\mathbb{S}_2}$ and then we have $F|_{\mathbb{S}_2}\equiv 0$ mod $p$.
By Lemma \ref{LemA1}, there exists $F'\in M_{k-8}(U_2({\mathcal O});\mathbb{Z})$ such that $F\equiv H_8F'$ mod $p$.
Then $F'$ has the property that $a_{F'}(m,r,s,n)\equiv 0$ mod $p$ for any $m$, $n\in \mathbb{Z}$ with
\[0\le m,\ n\le \left[\frac{k}{8}\right]-1=\left[\frac{k-8}{8}\right].\]
This is due to the explicit form of the Fourier expansion of $H_8$ (the same reason as in \cite{Ki-Ta} Lemma 5.1); 
\begin{align*}
H_8&=\dot{q}_1 \dot{q}_2 (4 -2\dot{q}_{12}^{-1}- 2\dot{q}_{12} - 2\ddot{q}_{12}^{-1} \\
&+ \dot{q}_{12}^{-1}\ddot{q}_{12}^{-1} + \dot{q}_{12}\ddot{q}_{12}^{-1} - 2 \ddot{q}_{12} + \dot{q}_{12}^{-1}\ddot{q}_{12} + \dot{q}_{12}\ddot{q}_{12})+\cdots. 
\end{align*}
Note here that $4\mid k-8$ and we can apply the above argument to $F'$.
 
If we apply this argument repeatedly, we have $F\equiv 0$ mod $p$. 
This completes the proof of Theorem \ref{Thm2}.
\end{proof}

\section{Completion of the proofs by numerical data}
\subsection{Fourier expansions of $\boldsymbol{h_{13}}$, $\boldsymbol{h_{15}}$, $\boldsymbol{h_{17}}$}
\label{ProofPlus}
In the proof of Lemma \ref{Lem2}, we relied on the numerical data. 
Hence we give its data here.  

Let
\[b_k:=\frac{k}{12}[\Gamma _1:\Gamma _0^{(1)}(32)] \] 
be the Sturm bounds we mentioned in the proof of Lemma \ref{Lem2}. 
Then we have $b_{13}=52$, $b_{15}=60$, $b_{17}=68$.  
Therefore the following numerical data are sufficient for our purpose. 
{\footnotesize 
\begin{align*}
h_{13}&:=2\theta^{14}f_2^3 - 60\theta^{10}f_2^4 + 448\theta^6f_2^5\\&
=2q^3 - 4 q^4 + 112 q^6 - 4 q^7 - 432 q^8 - 640 q^{10} - 
 594 q^{11} + 5504 q^{12} - 4320 q^{14} + 9380 q^{15} - 20288 q^{16} \\&
 + 
 46848 q^{18} - 71622 q^{19} - 16200 q^{20} - 123376 q^{22} + 
 331668 q^{23} + 282112 q^{24} - 65664 q^{26} - 978492 q^{27} \\&
- 
 453376 q^{28} + 709600 q^{30} + 1749808 q^{31} - 1112832 q^{32} - 
 120064 q^{34} - 1329480 q^{35} + 3895356 q^{36} \\&
- 2315088 q^{38} 
- 1756316 q^{39} - 152160 q^{40} - 2846208 q^{42} + 7579934 q^{43} - 
 11366784 q^{44} + 16414816 q^{46} \\&
- 17552376 q^{47} + 10176512 q^{48} 
+  5875200 q^{50} + 33105284 q^{51} + 3775288 q^{52}+\cdots ,
\end{align*}
\begin{align*}
h_{15}&:=\theta^{14} f_2^4 -28\theta^{10} f_2^5 +192\theta^6f_2^6\\&
=q^4 + 12 q^6 + 64 q^7 + 36 q^8 - 128 q^{10} - 1152 q^{11} - 
 936 q^{12} - 504 q^{14} + 7872 q^{15} + 8144 q^{16} + 16128 q^{18}\\& - 
 18816 q^{19} - 32022 q^{20} - 121100 q^{22} - 51264 q^{23} + 
 26976 q^{24} + 464256 q^{26} + 408960 q^{27} + 258448 q^{28} \\&- 
 909576 q^{30} - 577024 q^{31} - 971712 q^{32} + 355072 q^{34} - 
 2085120 q^{35} + 525753 q^{36} + 2238876 q^{38}\\& 
+ 7869888 q^{39} + 4278504 q^{40} - 5027328 q^{42} - 853760 q^{43} - 9440856 q^{44} + 
 8767832 q^{46} - 36277632 q^{47}\\&
 - 1162368 q^{48} - 26012160 q^{50} + 
 46803840 q^{51} + 24912602 q^{52} + 40240728 q^{54} + 71676992 q^{55}\\&
 - 
 22735296 q^{56} + 47704960 q^{58} - 187329024 q^{59} + 8247408 q^{60}+\cdots , 
\end{align*}
\begin{align*}
h_{17}&:= \theta^{18} f_2^4 - 36\theta^{14}f_2^5 + 
     368\theta^{10}f_2^6 - 768\theta^6f_2^7\\
&=q^4 - 12 q^6 - 128 q^7 - 228 q^8 - 800 q^{10} - 768 q^{11} + 
 1872 q^{12} + 15576 q^{14} + 36480 q^{15} + 9296 q^{16} \\&
- 108864 q^{18} -  297216 q^{19} - 178110 q^{20} + 356140 q^{22} + 845952 q^{23} + 
 816576 q^{24} - 682656 q^{26}\\&
 + 1071360 q^{27} - 803744 q^{28} +  3381480 q^{30} - 12461056 q^{31} - 5338176 q^{32} - 23163968 q^{34} + 
 20912640 q^{35} \\&
+ 16663617 q^{36} + 79051812 q^{38} + 40330368 q^{39} 
+  2424120 q^{40} - 99195264 q^{42}\\&
 - 169433600 q^{43} - 64675536 q^{44} - 
 142870072 q^{46} + 63431424 q^{47} - 965376 q^{48} + 629961600 q^{50} \\&
+  381400320 q^{51} + 220457666 q^{52} - 671789592 q^{54} - 
 295596160 q^{55} + 283752576 q^{56} + 90976480 q^{58}\\&
 +  62678016 q^{59} - 1557183840 q^{60} - 135149088 q^{62} - 
 2319442560 q^{63} - 394334976 q^{64} - 99539136 q^{66} \\&
+  1338126080 q^{67} + 6624813570 q^{68}+\cdots. 
\end{align*}
}
\subsection{Proof of integralities of the generators}
\label{List}
In this subsection, we list the descriptions of our generators as polynomials with variables $H_{12}$, $H_8$, $S$, $U$, $V\in \Z[\![\dot{\boldsymbol{q}}]\!]$, 
where $S$, $U$, $V$ are defined by
\begin{align*}
&F_{10}=6H_{12}+H_{4}^2H_8-2^3\cdot 3^2V,\\
&H_4=1+2^4\cdot 3 S, \\ 
&E_6=1+2^3\cdot 3^2 S+2^5\cdot 3^3 U. 
\end{align*}
The list below shows that the integralities of corresponding generators as in Subsection \ref{Int}.  
Namely we prove that our generators are elements of the ring $\Z[H_{12},H_8,S,U,V]$ in the following. 

{\tiny
\begin{align*}
K_{22}&=H_{12}^2 + 8 H_{12} H_8 S - 2 H_8^2 S + 384 H_{12} H_8 S^2
- 192 H_8^2 S^2 - 
   3072 H_8^2 S^3 + 24 H_8^2 U + 12 H_{12} V - 2 H_8 V - 96 H_8 S V, \\
H_{24}&= -2 H_{12}^2 S + H_{12} H_8 S + 96 H_{12} H_8 S^2 + 8 H_8^2 S^2 + 1536 H_{12} H_8 S^3
+ 896 H_8^2 S^3 + 30720 H_8^2 S^4 + 294912 H_8^2 S^5 - 12 H_{12} H_8 U\\
& - 2 H_8^2 U - 192 H_8^2 S U - 4608 H_8^2 S^2 U + H_{12} V + 48 H_{12} S V+ 12 H_8 S V + 1152 H_8 S^2 V + 18432 H_8 S^3 V - 144 H_8 U V + 6 V^2\\&
 + 
 288 S V^2, \\
K_{26}&=2 H_{12}^2 S + H_{12} H_8 S + 96 H_{12} H_8 S^2 + 8 H_8^2 S^2 + 3072 H_{12} H_8 S^3+ 
 1280 H_8^2 S^3 + 61440 H_8^2 S^4 + 884736 H_8^2 S^5 + 72 H_{12}^2 U\\&
 + 36 H_{12} H_8 U + 4 H_8^2 U + 2304 H_{12} H_8 S U + 480 H_8^2 S U + 
 55296 H_{12} H_8 S^2 U + 27648 H_8^2 S^2 U + 884736 H_8^2 S^3 U + 10616832 H_8^2 S^4 U \\
&+ H_{12} V + 96 H_{12} S V + 24 H_8 S V + 
 2304 H_8 S^2 V+ 55296 H_8 S^3 V + 1728 H_{12} U V + 288 H_8 U V + 
 27648 H_8 S U V\\&
 + 663552 H_8 S^2 U V + 12 V^2 + 864 S V^2 + 10368 U V^2,\\
H_{28}&=-48 H_{12} H_8^2 + 16 H_{12}^2 S^2 + 8 H_{12} H_8 S^2 + H_8^2 S^2 + 
 640 H_{12} H_8 S^3+ 192 H_8^2 S^3 + 12288 H_{12} H_8 S^4 + 12288 H_8^2 S^4\\
&+ 
 294912 H_8^2 S^5 + 2359296 H_8^2 S^6- 12 H_{12}^2 U - 4 H_{12} H_8 U - 
 288 H_{12} H_8 S U - 24 H_8^2 S U - 4608 H_{12} H_8 S^2 U\\
&- 2304 H_8^2 S^2 U - 
 36864 H_8^2 S^3 U + 144 H_8^2 U^2 + 4 H_{12} S V + 2 H_8 S V 
+384 H_{12} S^2 V + 288 H_8 S^2 V + 12288 H_8 S^3 V + 147456 H_8 S^4 V \\
&- 144 H_{12} U V- 24 H_8 U V - 1152 H_8 S U V + V^2 + 96 S V^2 + 2304 S^2 V^2,\\
I_{28}&=-2 H_{12}^2 S - H_{12} H_8 S - 192 H_{12}^2 S^2 - 192 H_{12} H_8 S^2 - 
 16 H_8^2 S^2- 9984 H_{12} H_8 S^3 - 2560 H_8^2 S^3 - 147456 H_{12} H_8 S^4 - 
 147456 H_8^2 S^4 \\
&- 3538944 H_8^2 S^5 - 28311552 H_8^2 S^6 + 
 72 H_{12}^2 U + 36 H_{12} H_8 U + 4 H_8^2 U + 2304 H_{12} H_8 S U + 
 576 H_8^2 S U + 27648 H_{12} H_8 S^2 U \\&
+ 27648 H_8^2 S^2 U 
+ 442368 H_8^2 S^3 U - H_{12} V 
- 120 H_{12} S V - 24 H_8 S V - 
 4608 H_{12} S^2 V - 3456 H_8 S^2 V - 147456 H_8 S^3 V\\
&- 
 1769472 H_8 S^4 V + 864 H_{12} U V+ 288 H_8 U V + 13824 H_8 S U V - 
 12 V^2 - 1152 S V^2 - 27648 S^2 V^2,
\end{align*}
\begin{align*}
K_{30}&=288 H_{12}^2 H_8 + 48 H_{12} H_8^2 + 4608 H_{12} H_8^2 S - 8 H_{12}^2 S^2 + 
 2 H_{12} H_8 S^2+ H_8^2 S^2 + 110592 H_{12} H_8^2 S^2 + 256 H_{12} H_8 S^3 + 
 192 H_8^2 S^3 \\&
+ 6144 H_{12} H_8 S^4+ 13056 H_8^2 S^4 + 368640 H_8^2 S^5 + 
 3538944 H_8^2 S^6 + 12 H_{12}^2 U + 2 H_{12} H_8 U + 288 H_{12}^2 S U + 
 240 H_{12} H_8 S U\\&
+ 13824 H_{12} H_8 S^2 U 
+ 1152 H_8^2 S^2 U + 
 221184 H_{12} H_8 S^3 U+ 129024 H_8^2 S^3 U + 4423680 H_8^2 S^4 U + 
 42467328 H_8^2 S^5 U- 864 H_{12} H_8 U^2\\
& - 144 H_8^2 U^2- 13824 H_8^2 S U^2- 331776 H_8^2 S^2 U^2 + 3456 H_{12} H_8 V + 4 H_{12} S V + 
 2 H_8 S V + 192 H_{12} S^2 V+ 312 H_8 S^2 V\\&
 + 15360 H_8 S^3 V + 
 221184 H_8 S^4 V + 144 H_{12} U V+ 6912 H_{12} S U V+ 1728 H_8 S U V + 
 165888 H_8 S^2 U V + 2654208 H_8 S^3 U V\\&
 - 10368 H_8 U^2 V + V^2 + 
 120 S V^2+ 3456 S^2 V^2 + 864 U V^2 + 41472 S U V^2,\\
L_{30}&=H_{12}^3 + H_{12}^2 H_8 + 48 H_{12}^2 H_8 S + 16 H_{12} H_8^2 S - H_8^3 S + 
 1152 H_{12}^2 H_8 S^2 + 1344 H_{12} H_8^2 S^2 - 32 H_8^3 S^2 + 
 36864 H_{12} H_8^2 S^3+ 7168 H_8^3 S^3\\&
 + 442368 H_{12} H_8^2 S^4 + 
 368640 H_8^3 S^4 + 7077888 H_8^3 S^5 + 56623104 H_8^3 S^6 + 20 H_8^3 U + 
 36 H_{12}^2 V + 18 H_{12} H_8 V - H_8^2 V + 1152 H_{12} H_8 S V \\
&+ 96 H_8^2 S V + 
 27648 H_{12} H_8 S^2 V + 13824 H_8^2 S^2 V + 442368 H_8^2 S^3 V + 
 5308416 H_8^2 S^4 V + 432 H_{12} V^2 + 72 H_8 V^2 + 6912 H_8 S V^2 \\&
+ 
 165888 H_8 S^2 V^2 + 1728 V^3,\\
K_{34}&=-2 H_{12}^3 S - H_{12}^2 H_8 S - 128 H_{12}^2 H_8 S^2 - 24 H_{12} H_8^2 S^2 - 
 2304 H_{12}^2 H_8 S^3- 2560 H_{12} H_8^2 S^3 - 64 H_8^3 S^3 - 
 92160 H_{12} H_8^2 S^4\\&
 - 12288 H_8^3 S^4 
- 884736 H_{12} H_8^2 S^5 - 
 737280 H_8^3 S^5 - 16515072 H_8^3 S^6 - 113246208 H_8^3 S^7 + 
 24 H_{12}^2 H_8 U + 10 H_{12} H_8^2 U+ H_8^3 U\\&
 + 768 H_{12} H_8^2 S U 
+ 
 144 H_8^3 S U + 18432 H_{12} H_8^2 S^2 U + 9216 H_8^3 S^2 U 
+ 
 294912 H_8^3 S^3 U + 3538944 H_8^3 S^4 U - H_{12}^2 V - 72 H_{12}^2 S V\\&
 - 
 32 H_{12} H_8 S V + 2 H_8^2 S V - 3456 H_{12} H_8 S^2 V - 96 H_8^2 S^2 V - 
 55296 H_{12} H_8 S^3 V - 27648 H_8^2 S^3 V 
- 1105920 H_8^2 S^4 V - 
 10616832 H_8^2 S^5 V\\&
 + 576 H_{12} H_8 U V + 96 H_8^2 U V
 + 
 9216 H_8^2 S U V 
+ 221184 H_8^2 S^2 U V - 18 H_{12} V^2 + H_8 V^2 - 
 864 H_{12} S V^2 - 144 H_8 S V^2 
- 20736 H_8 S^2 V^2\\&
 - 
 331776 H_8 S^3 V^2 + 3456 H_8 U V^2\\&
 - 72 V^3 - 3456 S V^3,\\
H_{36}&=-37 H_{12} H_8^3 + 16 H_{12}^2 H_8 S^2 + 8 H_{12} H_8^2 S^2 + H_8^3 S^2 + 
 128 H_{12}^2 H_8 S^3+ 704 H_{12} H_8^2 S^3 + 192 H_8^3 S^3 + 
 17408 H_{12} H_8^2 S^4 + 12800 H_8^3 S^4\\& 
+ 98304 H_{12} H_8^2 S^5
 + 
 352256 H_8^3 S^5 + 4194304 H_8^3 S^6 + 18874368 H_8^3 S^7 + 4 H_{12}^3 U - 
 8 H_{12}^2 H_8 U 
- 3 H_{12} H_8^2 U + 192 H_{12}^2 H_8 S U \\&
- 176 H_{12} H_8^2 S U
- 16 H_8^3 S U+ 4608 H_{12}^2 H_8 S^2 U + 768 H_{12} H_8^2 S^2 U - 
 1280 H_8^3 S^2 U + 147456 H_{12} H_8^2 S^3 U + 10240 H_8^3 S^3 U \\&
+ 
 1769472 H_{12} H_8^2 S^4 U + 1474560 H_8^3 S^4 U + 28311552 H_8^3 S^5 U + 
 226492416 H_8^3 S^6 U + 112 H_8^3 U^2 + 4 H_{12}^2 S V + 6 H_{12} H_8 S V+ 
 2 H_8^2 S V\\&
 + 576 H_{12} H_8 S^2 V + 304 H_8^2 S^2 V + 6144 H_{12} H_8 S^3 V + 
 14848 H_8^2 S^3 V + 270336 H_8^2 S^4 V + 1769472 H_8^2 S^5 V + 
 144 H_{12}^2 U V - 72 H_{12} H_8 U V \\&
- 16 H_8^2 U V + 4608 H_{12} H_8 S U V- 
 192 H_8^2 S U V 
+ 110592 H_{12} H_8 S^2 U V + 55296 H_8^2 S^2 U V + 
 1769472 H_8^2 S^3 U V + 21233664 H_8^2 S^4 U V \\&
+ H_{12} V^2 + H_8 V^2 + 
 96 H_{12} S V^2 + 120 H_8 S V^2 + 4608 H_8 S^2 V^2 + 55296 H_8 S^3 V^2 + 
 1728 H_{12} U V^2 + 288 H_8 U V^2 + 27648 H_8 S U V^2\\&
 + 
 663552 H_8 S^2 U V^2 + 8 V^3 + 576 S V^3 + 6912 U V^3,\\
K_{38}&=-96 H_{12}^2 H_8^2 - 16 H_{12} H_8^3 - 1536 H_{12} H_8^3 S + 16 H_{12}^3 S^2 + 
 12 H_{12}^2 H_8 S^2 
+ 2 H_{12} H_8^2 S^2 - 36864 H_{12} H_8^3 S^2 + 
 896 H_{12}^2 H_8 S^3\\&
 + 384 H_{12} H_8^2 S^3+ 16 H_8^3 S^3 + 
 18432 H_{12}^2 H_8 S^4 + 26624 H_{12} H_8^2 S^4 + 3328 H_8^3 S^4 + 
 737280 H_{12} H_8^2 S^5 + 258048 H_8^3 S^5 \\&
+ 7077888 H_{12} H_8^2 S^6 + 
 9240576 H_8^3 S^6
+ 150994944 H_8^3 S^7 + 905969664 H_8^3 S^8 -  12 H_{12}^3 U 
- 8 H_{12}^2 H_8 U - H_{12} H_8^2 U - 528 H_{12}^2 H_8 S U\\&
 - 
 176 H_{12} H_8^2 S U - 4 H_8^3 S U
- 9216 H_{12}^2 H_8 S^2 U 
- 
 11520 H_{12} H_8^2 S^2 U - 960 H_8^3 S^2 U - 258048 H_{12} H_8^2 S^3 U - 
 73728 H_8^3 S^3 U \\&
- 1769472 H_{12} H_8^2 S^4 U - 2211840 H_8^3 S^4 U - 
 21233664 H_8^3 S^5 U
+ 288 H_{12} H_8^2 U^2 + 48 H_8^3 U^2 
+ 
 4608 H_8^3 S U^2 + 110592 H_8^3 S^2 U^2\\&
 - 1152 H_{12} H_8^2 V + 
 4 H_{12}^2 S V + 2 H_{12} H_8 S V + 576 H_{12}^2 S^2 V + 480 H_{12} H_8 S^2 V 
+ 
 40 H_8^2 S^2 V + 27648 H_{12} H_8 S^3 V + 7168 H_8^2 S^3 V \\&
+ 
 442368 H_{12} H_8 S^4 V + 442368 H_8^2 S^4 V + 10616832 H_8^2 S^5 V + 
 84934656 H_8^2 S^6 V - 288 H_{12}^2 U V \
- 120 H_{12} H_8 U V - 4 H_8^2 U V\\&
 - 
 8064 H_{12} H_8 S U V - 1152 H_8^2 S U V - 110592 H_{12} H_8 S^2 U V 
- 
 82944 H_8^2 S^2 U V - 1327104 H_8^2 S^3 U V + 3456 H_8^2 U^2 V \\&
+ 
 H_{12} V^2 + 144 H_{12} S V^2 + 36 H_8 S V^2 + 6912 H_{12} S^2 V^2 + 
 5184 H_8 S^2 V^2 + 221184 H_8 S^3 V^2 + 2654208 H_8 S^4 V^2 
- 
 1728 H_{12} U V^2 \\&
- 432 H_8 U V^2 - 20736 H_8 S U V^2 + 12 V^3 + 
 1152 S V^3 
+ 27648 S^2 V^3,\\
H_{40}&=-24 H_{12}^2 H_8^2 - H_{12} H_8^3 - 42 H_{12} H_8^3 S + 3 H_8^4 S + 2 H_{12}^3 S^2 + 
 H_{12}^2 H_8 S^2 + 288 H_8^4 S^2 + 64 H_{12}^2 H_8 S^3 + 8 H_{12} H_8^2 S^3 + 
 6912 H_8^4 S^3 \\
&+ 768 H_{12}^2 H_8 S^4 + 512 H_{12} H_8^2 S^4 - 64 H_8^3 S^4 - 
 6144 H_{12} H_8^2 S^5 - 10240 H_8^3 S^5- 294912 H_{12} H_8^2 S^6 - 
 573440 H_8^3 S^6 - 13369344 H_8^3 S^7\\&
 - 113246208 H_8^3 S^8 - 
 2 H_{12}^3 U 
- H_{12}^2 H_8 U + 216 H_{12} H_8^3 U + 36 H_8^4 U - 24 H_{12}^3 S U - 
 84 H_{12}^2 H_8 S U - 14 H_{12} H_8^2 S U - H_8^3 S U + 3456 H_8^4 S U \\&
- 
 2304 H_{12}^2 H_8 S^2 U - 1632 H_{12} H_8^2 S^2 U - 176 H_8^3 S^2 U + 
 82944 H_8^4 S^2 U - 27648 H_{12}^2 H_8 S^3 U - 55296 H_{12} H_8^2 S^3 U - 
 12032 H_8^3 S^3 U\\&
 - 1105920 H_{12} H_8^2 S^4 U - 466944 H_8^3 S^4 U - 
 10616832 H_{12} H_8^2 S^5 U - 12386304 H_8^3 S^5 U - 
 198180864 H_8^3 S^6 U - 1358954496 H_8^3 S^7 U\\
& + 72 H_{12} H_8^2 U^2 + 
 4 H_8^3 U^2 + 192 H_8^3 S U^2 + 3 H_8^3 V + 216 H_8^3 S V + 
 24 H_{12}^2 S^2 V - 2 H_8^2 S^2 V - 384 H_{12} H_8 S^3 V - 416 H_8^2 S^3 V \\
&- 
 18432 H_{12} H_8 S^4 V - 30720 H_8^2 S^4 V - 958464 H_8^2 S^5 V - 
 10616832 H_8^2 S^6 V - 36 H_{12}^2 U V - 12 H_{12} H_8 U V - H_8^2 U V + 
 2592 H_8^3 U V\\&
 - 864 H_{12}^2 S U V - 1152 H_{12} H_8 S U V - 
 168 H_8^2 S U V - 41472 H_{12} H_8 S^2 U V - 12672 H_8^2 S^2 U V - 
 663552 H_{12} H_8 S^3 U V - 552960 H_8^2 S^3 U V\\&
 - 
 13271040 H_8^2 S^4 U V - 127401984 H_8^2 S^5 U V - 6 H_{12} S V^2 - 
 3 H_8 S V^2 - 288 H_{12} S^2 V^2 - 432 H_8 S^2 V^2 - 20736 H_8 S^3 V^2 \\&
- 
 331776 H_8 S^4 V^2 - 216 H_{12} U V^2 - 36 H_8 U V^2 - 
 10368 H_{12} S U V^2 - 5184 H_8 S U V^2 - 248832 H_8 S^2 U V^2 - 
 3981312 H_8 S^3 U V^2 - V^3 \\
&- 120 S V^3 - 3456 S^2 V^3 - 864 U V^3 - 
 41472 S U V^3,
\end{align*}

\begin{align*}
I_{40}&=288 H_{12}^3 H_8 + 96 H_{12}^2 H_8^2 + 16 H_{12} H_8^3 + 9216 H_{12}^2 H_8^2 S + 
 1920 H_{12} H_8^3 S - 8 H_{12}^3 S^2 - 2 H_{12}^2 H_8 S^2+ 
 221184 H_{12}^2 H_8^2 S^2\\& 
+ 110592 H_{12} H_8^3 S^2 + 96 H_{12} H_8^2 S^3 + 
 8 H_8^3 S^3 + 3538944 H_{12} H_8^3 S^3 + 3072 H_{12}^2 H_8 S^4 
+ 
 11776 H_{12} H_8^2 S^4 + 2048 H_8^3 S^4 \\&
+ 42467328 H_{12} H_8^3 S^4 + 
 466944 H_{12} H_8^2 S^5
 + 196608 H_8^3 S^5 + 5898240 H_{12} H_8^2 S^6 + 
 8749056 H_8^3 S^6 + 179306496 H_8^3 S^7\\&
 + 1358954496 H_8^3 S^8 + 
 12 H_{12}^3 U + 6 H_{12}^2 H_8 U + H_{12} H_8^2 U + 288 H_{12}^3 S U + 
 576 H_{12}^2 H_8 S U + 152 H_{12} H_8^2 S U + 4 H_8^3 S U\\&
 + 
 23040 H_{12}^2 H_8 S^2 U + 11136 H_{12} H_8^2 S^2 U + 768 H_8^3 S^2 U + 
 331776 H_{12}^2 H_8 S^3 U + 516096 H_{12} H_8^2 S^3 U + 64512 H_8^3 S^3 U \\
&+ 
 13271040 H_{12} H_8^2 S^4 U 
+ 3538944 H_8^3 S^4 U 
+ 
 127401984 H_{12} H_8^2 S^5 U + 127401984 H_8^3 S^5 U + 
 2378170368 H_8^3 S^6 U\\&
 + 16307453952 H_8^3 S^7 U - 864 H_{12}^2 H_8 U^2
 - 
 288 H_{12} H_8^2 U^2 - 48 H_8^3 U^2 - 27648 H_{12} H_8^2 S U^2 - 
 5760 H_8^3 S U^2 - 663552 H_{12} H_8^2 S^2 U^2 \\&
- 331776 H_8^3 S^2 U^2
 - 
 10616832 H_8^3 S^3 U^2 
- 127401984 H_8^3 S^4 U^2 + 6912 H_{12}^2 H_8 V + 
 1152 H_{12} H_8^2 V + 4 H_{12}^2 S V + 2 H_{12} H_8 S V \\:&
+ 110592 H_{12} H_8^2 S V + 
 96 H_{12}^2 S^2 V + 336 H_{12} H_8 S^2 V
 + 32 H_8^2 S^2 V + 
 2654208 H_{12} H_8^2 S^2 V + 19968 H_{12} H_8 S^3 V + 6272 H_8^2 S^3 V \\&
+ 
 368640 H_{12} H_8 S^4 V
 + 436224 H_8^2 S^4 V + 12681216 H_8^2 S^5 V 
+ 
 127401984 H_8^2 S^6 V + 288 H_{12}^2 U V + 72 H_{12} H_8 U V + 4 H_8^2 U V\\&
 + 
 10368 H_{12}^2 S U V + 9216 H_{12} H_8 S U V + 672 H_8^2 S U V + 
 497664 H_{12} H_8 S^2 U V
 + 78336 H_8^2 S^2 U V 
+ 
 7962624 H_{12} H_8 S^3 U V \\&
+ 5308416 H_8^2 S^3 U V + 
 159252480 H_8^2 S^4 U V + 1528823808 H_8^2 S^5 U V - 
 20736 H_{12} H_8 U^2 V - 3456 H_8^2 U^2 V \\&
- 331776 H_8^2 S U^2 V - 
 7962624 H_8^2 S^2 U^2 V + H_{12} V^2 + 41472 H_{12} H_8 V^2 
+ 
 168 H_{12} S V^2 + 36 H_8 S V^2 + 5760 H_{12} S^2 V^2\\&
 + 5472 H_8 S^2 V^2 + 
 267264 H_8 S^3 V^2 + 3981312 H_8 S^4 V^2 + 2592 H_{12} U V^2 + 
 144 H_8 U V^2 + 124416 H_{12} S U V^2 + 41472 H_8 S U V^2\\&
 + 
 2985984 H_8 S^2 U V^2 + 47775744 H_8 S^3 U V^2 - 124416 H_8 U^2 V^2 + 
 12 V^3 + 1440 S V^3 + 41472 S^2 V^3 + 10368 U V^3 + 497664 S U V^3,\\
K_{42}&=-48 H_{12}^3 H_8 + 8 H_{12}^2 H_8^2 + 192 H_{12} H_8^3 S - 2 H_{12}^3 S^2 - 
 H_{12}^2 H_8 S^2 - 18432 H_{12}^2 H_8^2 S^2 + 18432 H_{12} H_8^3 S^2 - 
 64 H_{12}^3 S^3 - 112 H_{12}^2 H_8 S^3 \\&
- 16 H_{12} H_8^2 S^3 + 
 294912 H_{12} H_8^3 S^3 - 4608 H_{12}^2 H_8 S^4 - 2560 H_{12} H_8^2 S^4 - 
 128 H_8^3 S^4 - 73728 H_{12}^2 H_8 S^5 - 141312 H_{12} H_8^2 S^5 \\&
- 
 24576 H_8^3 S^5 
- 3244032 H_{12} H_8^2 S^6 - 1671168 H_8^3 S^6 - 
 28311552 H_{12} H_8^2 S^7 
- 49545216 H_8^3 S^7 - 679477248 H_8^3 S^8 - 
 3623878656 H_8^3 S^9 \\&
+ 2 H_{12}^3 U 
+ H_{12}^2 H_8 U - 2304 H_{12} H_8^3 U + 
 72 H_{12}^3 S U + 108 H_{12}^2 H_8 S U + 10 H_{12} H_8^2 S U - H_8^3 S U
 + 
 4032 H_{12}^2 H_8 S^2 U + 1632 H_{12} H_8^2 S^2 U\\&
 - 144 H_8^3 S^2 U 
+ 
 55296 H_{12}^2 H_8 S^3 U + 82944 H_{12} H_8^2 S^3 U - 2304 H_8^3 S^3 U 
 1548288 H_{12} H_8^2 S^4 U + 331776 H_8^3 S^4 U+ 
 10616832 H_{12} H_8^2 S^5 U\\&
+ 10616832 H_8^3 S^5 U + 
 84934656 H_8^3 S^6 U - 72 H_{12} H_8^2 U^2 + 12 H_8^3 U^2 - 
 3456 H_{12} H_8^2 S U^2
 - 82944 H_8^3 S^2 U^2 - 1327104 H_8^3 S^3 U^2\\&
+ 
 6912 H_8^3 U^3 - 576 H_{12}^2 H_8 V + 192 H_{12} H_8^2 V 
+ 
 9216 H_{12} H_8^2 S V - 48 H_{12}^2 S^2 V
 - 24 H_{12} H_8 S^2 V - 
 2 H_8^2 S^2 V - 2304 H_{12}^2 S^3 V\\&
 - 3072 H_{12} H_8 S^3 V - 
 608 H_8^2 S^3 V - 129024 H_{12} H_8 S^4 V - 61440 H_8^2 S^4 V - 
 1769472 H_{12} H_8 S^5 V - 2654208 H_8^2 S^5 V - 49545216 H_8^2 S^6 V\\& 
- 
 339738624 H_8^2 S^7 V 
+ 36 H_{12}^2 U V + 12 H_{12} H_8 U V - H_8^2 U V + 
 1728 H_{12}^2 S U V + 1440 H_{12} H_8 S U V - 48 H_8^2 S U V+  55296 H_{12} H_8 S^2 U V\\&
 + 6912 H_8^2 S^2 U V + 663552 H_{12} H_8 S^3 U V 
+ 
 442368 H_8^2 S^3 U V + 5308416 H_8^2 S^4 U V - 864 H_8^2 U^2 V - 
 41472 H_8^2 S U^2 V - 6 H_{12} S V^2 \\&
- 3 H_8 S V^2 - 864 H_{12} S^2 V^2 - 
 576 H_8 S^2 V^2 - 27648 H_{12} S^3 V^2 - 39168 H_8 S^3 V^2 - 
 1105920 H_8 S^4 V^2 - 10616832 H_8 S^5 V^2\\&
 + 216 H_{12} U V^2 +  36 H_8 U V^2 + 10368 H_{12} S U V^2 + 3456 H_8 S U V^2 +  82944 H_8 S^2 U V^2 - V^3 - 144 S V^3 - 6912 S^2 V^3 - 110592 S^3 V^3,\\
H_{48}&=-162 H_{12}^3 H_8^2 - 63 H_{12}^2 H_8^3 - 4 H_{12} H_8^4 - 5172 H_{12}^2 H_8^3 S - 
 834 H_{12} H_8^4 S 
- H_{12}^4 S^2 + 3 H_{12}^3 H_8 S^2 + H_{12}^2 H_8^2 S^2\\&
 - 
 124416 H_{12}^2 H_8^3 S^2
 - 61632 H_{12} H_8^4 S^2 
+ 48 H_8^5 S^2 - 
 64 H_{12}^3 H_8 S^3 + 40 H_{12}^2 H_8^2 S^3 - 12 H_{12} H_8^3 S^3 - 3 H_8^4 S^3 
- 
 1981440 H_{12} H_8^4 S^3\\&
 + 5376 H_8^5 S^3 - 1536 H_{12}^3 H_8 S^4
 - 
 1152 H_{12}^2 H_8^2 S^4 
- 1792 H_{12} H_8^3 S^4 - 624 H_8^4 S^4 - 
 23887872 H_{12} H_8^4 S^4 + 184320 H_8^5 S^4 - 79872 H_{12}^2 H_8^2 S^5 \\&
- 
 99328 H_{12} H_8^3 S^5 - 51712 H_8^4 S^5
 + 1769472 H_8^5 S^5 - 
 884736 H_{12}^2 H_8^2 S^6 - 2441216 H_{12} H_8^3 S^6 - 2170880 H_8^4 S^6
- 
 33030144 H_{12} H_8^3 S^7\\&
 - 48758784 H_8^4 S^7 - 226492416 H_{12} H_8^3 S^8
 - 
 594542592 H_8^4 S^8
 - 4529848320 H_8^4 S^9 - 21743271936 H_8^4 S^{10} + 
 H_{12}^4 U - 3 H_{12}^3 H_8 U\\&
 - H_{12}^2 H_8^2 U - 72 H_{12} H_8^4 U - 12 H_8^5 U + 
 12 H_{12}^3 H_8 S U
 - 78 H_{12}^2 H_8^2 S U
- H_{12} H_8^3 S U + H_8^4 S U- 
 1152 H_8^5 S U + 1152 H_{12}^3 H_8 S^2 U \\&
- 2496 H_{12}^2 H_8^2 S^2 U- 
 464 H_{12} H_8^3 S^2 U + 160 H_8^4 S^2 U - 27648 H_8^5 S^2 U
- 
 4608 H_{12}^2 H_8^2 S^3 U - 38912 H_{12} H_8^3 S^3 U + 7552 H_8^4 S^3 U \\&
+ 
 442368 H_{12}^2 H_8^2 S^4 U - 1290240 H_{12} H_8^3 S^4 U - 
 24576 H_8^4 S^4 U - 8847360 H_{12} H_8^3 S^5 U
 - 11501568 H_8^4 S^5 U 
+ 
 56623104 H_{12} H_8^3 S^6 U\\&
- 297271296 H_8^4 S^6 U - 
 2038431744 H_8^4 S^7 U + 288 H_{12}^2 H_8^2 U^2 + 124 H_{12} H_8^3 U^2+ 
 7 H_8^4 U^2
 + 9216 H_{12} H_8^3 S U^2 + 1488 H_8^4 S U^2\\&
 + 
 221184 H_{12} H_8^3 S^2 U^2 + 110592 H_8^4 S^2 U^2+ 
 3538944 H_8^4 S^3 U^2 + 42467328 H_8^4 S^4 U^2 - 3888 H_{12}^2 H_8^2 V
 - 
 642 H_{12} H_8^3 V\\&
 - 61920 H_{12} H_8^3 S V+ 72 H_8^4 S V - 48 H_{12}^3 S^2 V + 
 12 H_{12}^2 H_8 S^2 V - 24 H_{12} H_8^2 S^2 V - 7 H_8^3 S^2 V - 
 1492992 H_{12} H_8^3 S^2 V \\&
+ 6912 H_8^4 S^2 V - 2688 H_{12}^2 H_8 S^3 V- 
 3072 H_{12} H_8^2 S^3 V
 - 1360 H_8^3 S^3 V + 110592 H_8^4 S^3 V - 
 55296 H_{12}^2 H_8 S^4 V - 116736 H_{12} H_8^2 S^4 V \\&
- 96768 H_8^3 S^4 V - 
 2211840 H_{12} H_8^2 S^5 V - 3133440 H_8^3 S^5 V 
- 
 21233664 H_{12} H_8^2 S^6 V - 48955392 H_8^3 S^6 V\\&
 - 
 452984832 H_8^3 S^7 V
 - 2717908992 H_8^3 S^8 V + 36 H_{12}^3 U V 
- 
 48 H_{12}^2 H_8 U V - 3 H_{12} H_8^2 U V + H_8^3 U V - 864 H_8^4 U V - 
 144 H_{12}^2 H_8 S U V \\&
- 480 H_{12} H_8^2 S U V + 180 H_8^3 S U V + 
 27648 H_{12}^2 H_8 S^2 U V - 48384 H_{12} H_8^2 S^2 U V + 
 5184 H_8^3 S^2 U V - 552960 H_{12} H_8^2 S^3 U V \\&
- 387072 H_8^3 S^3 U V 
+ 
 5308416 H_{12} H_8^2 S^4 U V
 - 19906560 H_8^3 S^4 U V - 
 191102976 H_8^3 S^5 U V + 6912 H_{12} H_8^2 U^2 V
 + 1152 H_8^3 U^2 V\\&
 + 
 110592 H_8^3 S U^2 V + 2654208 H_8^3 S^2 U^2 V 
- 23328 H_{12} H_8^2 V^2 + 
 36 H_8^3 V^2 - 6 H_{12}^2 S V^2 - 15 H_{12} H_8 S V^2 - 6 H_8^2 S V^2 + 
 1728 H_8^3 S V^2\\&
 - 864 H_{12}^2 S^2 V^2 - 1296 H_{12} H_8 S^2 V^2 - 
 1032 H_8^2 S^2 V^2 
- 41472 H_{12} H_8 S^3 V^2 - 59136 H_8^2 S^3 V^2 - 
 663552 H_{12} H_8 S^4 V^2\\&
 - 1327104 H_8^2 S^4 V^2 - 
 15925248 H_8^2 S^5 V^2 - 127401984 H_8^2 S^6 V^2 
+ 432 H_{12}^2 U V^2
- 
 252 H_{12} H_8 U V^2 + 42 H_8^2 U V^2 \\&
- 8640 H_{12} H_8 S U V^2 - 
 864 H_8^2 S U V^2 + 165888 H_{12} H_8 S^2 U V^2 
- 373248 H_8^2 S^2 U V^2 
- 
 5971968 H_8^2 S^3 U V^2 + 41472 H_8^2 U^2 V^2 \\&
- H_{12} V^3 - 2 H_8 V^3 
- 
 144 H_{12} S V^3 
- 276 H_8 S V^3 - 6912 H_{12} S^2 V^3 - 12096 H_8 S^2 V^3
 - 
 221184 H_8 S^3 V^3 
- 2654208 H_8 S^4 V^3 \\&
+ 1728 H_{12} U V^3 - 
 1296 H_8 U V^3 - 62208 H_8 S U V^3 - 9 V^4
 - 864 S V^4 - 20736 S^2 V^4
\end{align*}

\begin{align*}
K_{52}&=-876 H_{12}^4 H_8 - 124 H_{12}^3 H_8^2 + H_{12}^2 H_8^3 - 21504 H_{12}^3 H_8^2 S + 
 384 H_{12}^2 H_8^3 S
 + 288 H_{12} H_8^4 S \\&
- 2 H_{12}^4 S^2 - 7 H_{12}^3 H_8 S^2 - 
 H_{12}^2 H_8^2 S^2 - 672768 H_{12}^3 H_8^2 S^2- 23040 H_{12}^2 H_8^3 S^2 \\&
+ 
 53760 H_{12} H_8^4 S^2 - 64 H_{12}^4 S^3 - 512 H_{12}^3 H_8 S^3- 
 240 H_{12}^2 H_8^2 S^3 
- 5750784 H_{12}^2 H_8^3 S^3\\&
 + 2 H_8^4 S^3 + 
 3588096 H_{12} H_8^4 S^3 - 6400 H_{12}^3 H_8 S^4 - 18432 H_{12}^2 H_8^2 S^4 
- 
 704 H_{12} H_8^3 S^4 \\&
- 129171456 H_{12}^2 H_8^3 S^4
 + 480 H_8^4 S^4 + 
 100270080 H_{12} H_8^4 S^4 - 98304 H_{12}^3 H_8 S^5 - 
 538624 H_{12}^2 H_8^2 S^5 \\&
- 131072 H_{12} H_8^3 S^5 + 43008 H_8^4 S^5+ 
 962592768 H_{12} H_8^4 S^5 - 6193152 H_{12}^2 H_8^2 S^6 
- 
 8241152 H_{12} H_8^3 S^6\\& 
+ 1630208 H_8^4 S^6 - 56623104 H_{12}^2 H_8^2 S^7 - 
 208404480 H_{12} H_8^3 S^7 
+ 10616832 H_8^4 S^7 \\&
- 
 2378170368 H_{12} H_8^3 S^8 - 1019215872 H_8^4 S^8- 14495514624 H_{12} H_8^3 S^9
 - 29595009024 H_8^4 S^9 \\&
- 
 318901321728 H_8^4 S^{10} - 1391569403904 H_8^4 S^{11} + 2 H_{12}^4 U + 
 7 H_{12}^3 H_8 U + H_{12}^2 H_8^2 U\\&
 - 18432 H_{12}^2 H_8^3 U 
- 3072 H_{12} H_8^4 U + 
 72 H_{12}^4 S U + 556 H_{12}^3 H_8 S U + 242 H_{12}^2 H_8^2 S U - 
 H_{12} H_8^3 S U \\&
- 2 H_8^4 S U 
- 294912 H_{12} H_8^4 S U + 
 6144 H_{12}^3 H_8 S^2 U + 18784 H_{12}^2 H_8^2 S^2 U + 272 H_{12} H_8^3 S^2 U \\&
- 
 528 H_8^4 S^2 U - 7077888 H_{12} H_8^4 S^2 U + 82944 H_{12}^3 H_8 S^3 U + 
 516096 H_{12}^2 H_8^2 S^3 U + 89344 H_{12} H_8^3 S^3 U\\& 
- 52480 H_8^4 S^3 U + 
 4202496 H_{12}^2 H_8^2 S^4 U + 6316032 H_{12} H_8^3 S^4 U - 
 2383872 H_8^4 S^4 U \\&
+ 31850496 H_{12}^2 H_8^2 S^5 U
 + 
 136249344 H_{12} H_8^3 S^5 U - 44236800 H_8^4 S^5 U + 
 934281216 H_{12} H_8^3 S^6 U\\&
 - 9437184 H_8^4 S^6 U + 
 4076863488 H_{12} H_8^3 S^7 U + 7247757312 H_8^4 S^7 U + 
 43486543872 H_8^4 S^8 U\\& 
+ 2304 H_{12}^3 H_8 U^2 
+ 216 H_{12}^2 H_8^2 U^2 +100 H_{12} H_8^3 U^2 + 20 H_8^4 U^2 + 51840 H_{12}^2 H_8^2 S U^2\\& 
+ 
 4032 H_{12} H_8^3 S U^2 + 2400 H_8^4 S U^2 + 1769472 H_{12}^2 H_8^2 S^2 U^2 - 
 165888 H_{12} H_8^3 S^2 U^2 + 4608 H_8^4 S^2 U^2 \\&
+ 
 11501568 H_{12} H_8^3 S^3 U^2 
- 8699904 H_8^4 S^3 U^2 + 
 339738624 H_{12} H_8^3 S^4 U^2 - 336199680 H_8^4 S^4 U^2\\&
 - 
 3227516928 H_8^4 S^5 U^2 + 55296 H_{12} H_8^3 U^3 + 9216 H_8^4 U^3 + 884736 H_8^4 S U^3 \\&
+ 21233664 H_8^4 S^2 U^3 - 21024 H_{12}^3 H_8 V - 
 240 H_{12}^2 H_8^2 V + 288 H_{12} H_8^3 V - 179712 H_{12}^2 H_8^2 S V \\&
+ 
 59904 H_{12} H_8^3 S V - 72 H_{12}^3 S^2 V - 176 H_{12}^2 H_8 S^2 V - 
 6 H_{12} H_8^2 S^2 V- 8073216 H_{12}^2 H_8^2 S^2 V \\&
+ 2 H_8^3 S^2 V + 
 3760128 H_{12} H_8^3 S^2 V - 3072 H_{12}^3 S^3 V - 13952 H_{12}^2 H_8 S^3 V -  2656 H_{12} H_8^2 S^3 V \\&
+ 480 H_8^3 S^3 V + 60162048 H_{12} H_8^3 S^3 V - 
 239616 H_{12}^2 H_8 S^4 V \\&
- 280576 H_{12} H_8^2 S^4 V + 36352 H_8^3 S^4 V - 
 3538944 H_{12}^2 H_8 S^5 V 
- 10887168 H_{12} H_8^2 S^5 V + 
 417792 H_8^3 S^5 V\\& 
- 166330368 H_{12} H_8^2 S^6 V
- 59768832 H_8^3 S^6 V 
-  1358954496 H_{12} H_8^2 S^7 V - 2378170368 H_8^3 S^7 V \\&
- 
 32614907904 H_8^3 S^8 V - 173946175488 H_8^3 S^9 V + 60 H_{12}^3 U V + 
 164 H_{12}^2 H_8 U V + 3 H_{12} H_8^2 U V \\&
- 2 H_8^3 U V - 
 221184 H_{12} H_8^3 U V+ 2592 H_{12}^3 S U V 
+ 13248 H_{12}^2 H_8 S U V + 
 1816 H_{12} H_8^2 S U V - 544 H_8^3 S U V \\
&+ 152064 H_{12}^2 H_8 S^2 U V + 
 202368 H_{12} H_8^2 S^2 U V 
- 52992 H_8^3 S^2 U V + 
 1990656 H_{12}^2 H_8 S^3 U V\\& 
+ 6967296 H_{12} H_8^2 S^3 U V 
- 
 1916928 H_8^3 S^3 U V + 61046784 H_{12} H_8^2 S^4 U V \\&
- 
 7077888 H_8^3 S^4 U V + 382205952 H_{12} H_8^2 S^5 U V 
+ 
 509607936 H_8^3 S^5 U V \\&
+ 4076863488 H_8^3 S^6 U V + 
 55296 H_{12}^2 H_8 U^2 V 
- 1728 H_{12} H_8^2 U^2 V \\&
+ 480 H_8^3 U^2 V + 
 359424 H_{12} H_8^2 S U^2 V - 96768 H_8^3 S U^2 V +  21233664 H_{12} H_8^2 S^2 U^2 V \\&
- 12607488 H_8^3 S^2 U^2 V - 
 201719808 H_8^3 S^3 U^2 V + 663552 H_8^3 U^3 V \\&
- 126144 H_{12}^2 H_8 V^2 + 
 19584 H_{12} H_8^2 V^2 
- 6 H_{12}^2 S V^2 - 3 H_{12} H_8 S V^2 + 
 940032 H_{12} H_8^2 S V^2 - 1440 H_{12}^2 S^2 V^2 \\&
- 1776 H_{12} H_8 S^2 V^2 + 
 56 H_8^2 S^2 V^2 - 55296 H_{12}^2 S^3 V^2 - 143616 H_{12} H_8 S^3 V^2 - 
 2944 H_8^2 S^3 V^2\\&
 - 3428352 H_{12} H_8 S^4 V^2 - 1155072 H_8^2 S^4 V^2- 
 42467328 H_{12} H_8 S^5 V^2 - 63700992 H_8^2 S^5 V^2\\&
- 
 1189085184 H_8^2 S^6 V^2 - 8153726976 H_8^2 S^7 V^2 + 
 648 H_{12}^2 U V^2 + 1020 H_{12} H_8 U V^2 - 128 H_8^2 U V^2 \\&
+ 
 31104 H_{12}^2 S U V^2 + 84672 H_{12} H_8 S U V^2 - 13632 H_8^2 S U V^2 + 
 1078272 H_{12} H_8 S^2 U V^2 \\&
- 193536 H_8^2 S^2 U V^2 +  11943936 H_{12} H_8 S^3 U V^2 + 10616832 H_8^2 S^3 U V^2 + 
 127401984 H_8^2 S^4 U V^2 \\
&+ 331776 H_{12} H_8 U^2 V^2 - 65664 H_8^2 U^2 V^2 - 3151872 H_8^2 S U^2 V^2\\& 
- H_{12} V^3 -  216 H_{12} S V^3- 48 H_8 S V^3 
- 17280 H_{12} S^2 V^3 - 9216 H_8 S^2 V^3 - 
 442368 H_{12} S^3 V^3\\&
 - 626688 H_8 S^3 V^3 - 17694720 H_8 S^4 V^3 - 
 169869312 H_8 S^5 V^3 + 2592 H_{12} U V^3 + 576 H_8 U V^3\\&
+  124416 H_{12} S U V^3 + 55296 H_8 S U V^3 + 1327104 H_8 S^2 U V^3 - 
 12 V^4 - 1728 S V^4 \\&
- 82944 S^2 V^4 - 1327104 S^3 V^4.
\end{align*}
}

\section*{Acknowledgement}
The idea of proof of $h_{15}\in M_{15}^+(\Gamma _0^{(1)}(4),\chi _{-4})$ using the twisting operator is due to
Professor S. B\"ocherer. 
This makes it possible to prove Lemma \ref{Lem2}. 
The author is supported by JSPS KAKENHI Grant Number JP18K03229.


\begin{thebibliography}{}
\bibitem{C-C-K}
D.~Choi, Dohoon, Y.~Choie, T.~Kikuta, 
Sturm type theorem for Siegel modular forms of genus $2$ modulo $p$, Acta Arith. 158 (2013), no. 2, 129-139. 


\bibitem{Der}
T.~Dern, 
Hermitesche Modulformen zweiten Grades, Verlag Mainz, Wissenschaftsverlag, Aachen, 2001.

\bibitem{D-K}
T.~Dern, A.~Krieg, 
Graded rings of Hermitian modular forms of degree $2$, Manuscripta Math. 110 (2003), no. 2, 251-272.


\bibitem{Igu2}
J.-I.~Igusa, 
On the ring of modular forms of degree two over $\Z$, Amer. J. Math. 101 (1979), no. 1, 149-183.

\bibitem{Ki-Na}
T.~Kikuta, S.~Nagaoka, 
On Hermitian modular forms mod $p$. 
J. Math. Soc. Japan 63 (2011), no. 1, 211-238.

\bibitem{Ki-Na2}
T.~Kikuta, S.~ Nagaoka, 
On the theta operator for Hermitian modular forms of degree 2, 
Abh. Math. Semin. Univ. Hambg. 87 (2017), no. 1, 145-163. 

\bibitem{Ki-Ta} 
T.~Kikuta, S.~Takemori, 
Sturm bounds for Siegel modular forms of degree 2 and odd weights, 
to apper in Math. Z.

\bibitem{Na-Ta} 
S.~Nagaoka, S.~Takemori, 
Theta operator on Hermitian modular forms over the Eisenstein field, to appear in Ramanujan J. 

\bibitem{Kri}
A.~Krieg, 
The Maass spaces on the Hermitian half-space of degree $2$. Math. Ann. 289 (1991), no. 4, 663-681. 

\bibitem{Shim} 
G.~Shimura, Introduction to the arithmetic theory of automorphic functions. Reprint of the 1971 original. Publications of the Mathematical Society of Japan, 11. 
Kano Memorial Lectures, 1. Princeton University Press, Princeton, NJ, 1994. xiv+271
\end{thebibliography}
\end{document}